\DeclareMathAlphabet{\pazocal}{OMS}{zplm}{m}{n}
\newcommand{\R}{\mathbf{R}}%reals paco
  \newcommand{\Ss}{\mathcal S}
 \newcommand{\RR}{\mathbf{R}}  % reals
 \newcommand{\BB}{\mathbf{B}}  %ball
 \renewcommand{\SS}{\mathbf{S}}  %circle, sphere
    \newcommand{\dist}{\operatorname{dist}}
 \newcommand{\eps}{\epsilon}
 \newcommand{\Tan}{\operatorname{Tan}}
\newcommand{\ee}{\mathbf e}
\newcommand{\pdf}[2]{\frac{\partial #1}{\partial #2}}
\newtheorem*{theorem*}{Theorem}
\newtheorem{theorem}{Theorem}[section]
\newtheorem{lemma}[theorem]{Lemma}
\newtheorem{corollary}[theorem]{Corollary}
\newtheorem{proposition}[theorem]{Proposition}
\theoremstyle{definition}
\newtheorem{remark}[theorem]{Remark}
\def\pproof#1{\@ifnextchar[\opargproof
{\opargproof[\it Proof of #1.]}}
\def\opargproof[#1]{\par\noindent {\bf #1 }}
\numberwithin{equation}{section}
\begin{document}

\title[Graphical translators]{Graphical Translators for Mean Curvature Flow}
\author[D. Hoffman]{\textsc{D. Hoffman}}

\address{David Hoffman\newline
 Department of Mathematics\newline
 Stanford University \newline
   Stanford, CA 94305, USA\newline
{\sl E-mail address:} {\bf dhoffman@stanford.edu}}

\author[T. Ilmanen]{\textsc{T. Ilmanen}}

\address{Tom Ilmanen\newline
Department of Mathematics \newline
E. T. H. Zürich \newline 
Rämistrasse 101, 
8092 Zürich,
Switzerland\newline
{\sl E-mail address:} {\bf tom.ilmanen@math.ethz.ch}
}

\author[F. Martin]{\textsc{F. Martín}}

\address{Francisco Martín\newline
Departmento de Geometría y Topología  \newline
Instituto de Matemáticas IE-Math Granada \newline
Universidad de Granada\newline
18071 Granada, Spain\newline
{\sl E-mail address:} {\bf fmartin@ugr.es}
}
\author[B. White]{\textsc{B. White}}

\address{Brian White\newline
Department of Mathematics \newline
 Stanford University \newline 
  Stanford, CA 94305, USA\newline
{\sl E-mail address:} {\bf bcwhite@stanford.edu}
}

\date{July 5, 2018.  Revised March 23, 2019}
\subjclass[2010]{Primary 53C44, 53C21, 53C42}
\keywords{Mean curvature flow, singularities, monotonicity formula, area estimates, comparison principle.}
\thanks{F. Martín is partially supported by the MINECO/FEDER grant MTM2017-89677-P and  by the
Leverhulme Trust grant IN-2016-019.
B. White was partially supported by grants from the Simons Foundation
(\#396369) and from the National Science Foundation (DMS~1404282, DMS~ 1711293).}

\begin{abstract}
In this paper we provide a full classification of complete translating graphs in $\RR^3$.
We also construct  $(n-1)$-parameter families of new examples of translating graphs in $\RR^{n+1}$.
\end{abstract}

\maketitle

\setcounter{tocdepth}{1}
%\begin{KeepFromToc}
  \tableofcontents
%\end{KeepFromToc}
\section{Introduction}
%-------------------------------------------------------------------

\label{sec:intro}

A {\bf translator} is a hypersurface $M$ in $\R^{n+1}$ such that 
\[
   t\mapsto M- t \,\ee_{n+1}
\]
is a mean curvature flow, i.e., such that normal component of the velocity at each point is
equal to the mean curvature at that point:
\begin{equation}\label{general-translator-equation}
   \overrightarrow{H} = -\ee_{n+1}^\perp.
\end{equation}
If a translator $M$ is the graph of function $u:\Omega\subset\RR^n\to\RR$, we will
say that $M$ is a {\bf translating graph}; in that case, we also refer to the
function $u$ as a translator, and we say that $u$ is complete if its graph
is a complete submanifold of $\RR^{n+1}$.  
Thus $u:\Omega\subset\RR^n\to\RR$ is a translator if and only if
it solves the 
translator equation (the nonparametric form of~\eqref{general-translator-equation}):
\begin{equation}\label{divergence-translator-equation}
  D_i\left( \frac{D_iu}{\sqrt{1+ |Du|^2}} \right) = -\frac1{\sqrt{1+|Du|^2}}.
\end{equation}
The equation can also be written as
\begin{equation}\label{translator-equation}
   (1 + |Du|^2) \Delta u - D_iu\,D_ju\,D_{ij}u + |Du|^2 +1 = 0.
\end{equation}

\begin{comment}
If we differentiate the equation wrt to $\theta$, we get (letting $v=u_\theta$)
\begin{align} \label{cuco}
\nonumber &(1+|Du|^2)\Delta v - D_iuD_juD_{ij}v \\
&+
2D_kuD_kv \Delta u - 2D_ivD_juD_{ij}u 
+ 
2 D_kuD_kv
=
0.
\end{align}
\end{comment}

%In case $n=2$, the equation can be rewritten as
%\begin{equation} \label{translator-equation}
%\left(1+ \left(\pdf{u}y\right)^2\right) \, \frac{\partial^2u}{\partial x^2} 
%-2 \pdf{u}x \, \pdf{u}y \, \frac{\partial^2 u}{\partial x\,\partial y}
%+
%\left(1+\left(\pdf{u}x\right)^2\right)  \, \frac{\partial^2u}{\partial y^2} 
%+
%\left(\pdf{u}x\right)^2+\left(\pdf{u}y\right)^2+1=0.
%\end{equation}

In this paper, we classify all complete translating graphs in $\RR^3$.
In~\cite{scherkon}, we construct a family of translators that includes analogs of 
the classical Scherk doubly periodic minimal surfaces and of helicoids.
In another paper~\cite{annuli}, we construct a two-parameter family of translating annuli.
See~\cite{survey} for a survey of various results about translators.

Before stating our classification theorem, we recall the known examples
of translating graphs in $\RR^3$.   First, the Cartesian product of the grim reaper curve with $\RR$
is a translator:
\newcommand{\grim}{\mathcal{G}}
\begin{align*}
   &\grim: \RR \times (-\pi/2,\pi/2) \to \RR, \\
   &\grim(x,y) =  \log(\cos y).
\end{align*}
We refer to it as the {\bf grim reaper surface}~(Fig.~\ref{default}).
\begin{figure}[htbp]
\begin{center}
\includegraphics[width=.78\textwidth]{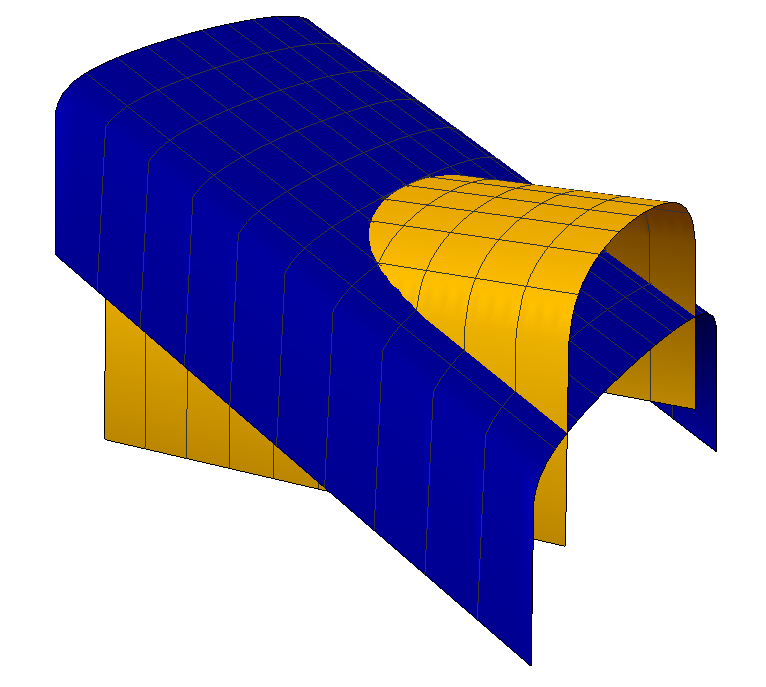}
\caption{The  grim reaper surface in $\R^3$, and that surface tilted by angle $\theta=-\pi/4$
and dilated by $1/\cos(\pi/4)$.}
\label{default}
\end{center}
\end{figure}

Second, if we rotate the grim reaper surface by an angle $\theta\in (0,\pi/2)$ about the $y$-axis and dilate
by $1/\cos\theta$, the resulting surface is again a translator, given by
\begin{equation}\label{tilted-grim-reaper-equation}
\begin{aligned}
&\grim_\theta:  \RR  \times(-b,b)\to  \RR,   \\
&\grim_\theta(x,y) = 
 \frac{\log (\cos (y \cos \theta ))}{\cos^2\theta}+x \tan\theta,
\end{aligned}
\end{equation}
where $b=\pi/(2\cos\theta)$.  
Note that as $\theta$ goes from $0$ to $\pi/2$, the width $2b$ of the strip goes
from $\pi$ to $\infty$.
We refer to these examples as {\bf tilted grim reaper surfaces}.

Every translator $\RR^3$ with zero Gauss curvature is (up to translations and 
up to rotations  about a vertical axis) a grim reaper surface, a tilted grim reaper surface, or a vertical plane.
See~\cite{mss} or Theorem~\ref{convexity-theorem} below.

In \cite{CSS}, J. Clutterbuck, O. Schn\"urer and F. Schulze  (see also \cite{Altschuler-Wu}) 
proved for each $n\ge 2$ that there is a unique (up to vertical translation) entire, rotationally invariant
function $u: \RR^n\to \RR$ whose graph is a translator.
It is called the {\bf bowl soliton}~(Fig.~\ref{bowl}).
\begin{figure}[htbp]
\begin{center}
\includegraphics[width=.75\textwidth]{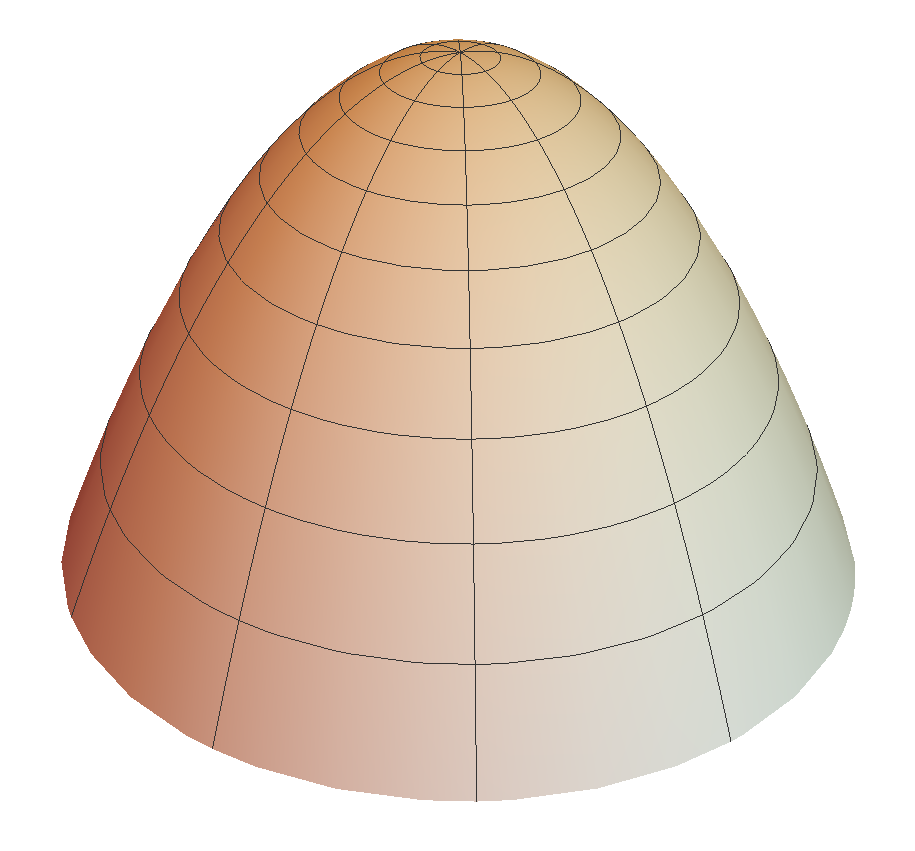}
\caption{The bowl soliton.  As one moves down, the slope tends to infinity, 
and thus the end is asymptotically cylindrical.}
\label{bowl}
\end{center}
\end{figure}

In addition to the examples described above, Ilmanen (in unpublished work) proved that for
each $0< k < 1/2$, there is a translator 
$
   u: \Omega\to \RR
$
with the following properties:
 $u(x,y)\equiv u(-x,y)\equiv u(x,-y)$, 
$u$ attains its maximum at $(0,0)\in \Omega$, and
\[
   D^2u(0,0)  
   =\begin{bmatrix} -k &0 \\ 0 &-(1-k) \end{bmatrix}.
\]
The domain $\Omega$ is either a strip $\RR\times(-b,b)$ or $\RR^2$.
He referred to these examples as {\bf $\Delta$-wings} (Fig.~\ref{intro}.)
As $k\to 0$, he showed that the examples converge to the grim reaper surface.
Uniqueness (for a given $k$) was not known.   It was also not known
which strips $\RR\times (-b,b)$ occur as domains of such examples.
This paper is primarily about translators in $\RR^3$, but in Section~\ref{higher-delta-wings}
 we extend Ilmanen's original proof to get $\Delta$-wings in $\RR^{n+1}$ that have 
  prescribed principal curvatures at the origin.
  For $n\ge 3$, the examples include entire graphs that are not rotationally invariant.
 In Section~\ref{higher-slabs-section}, 
 we modify the construction to produce a family of $\Delta$-wings in $\RR^{n+2}$
 over any given slab of width $>\pi$.  
 {See~\cite{wang} for a different construction
 of some higher dimensional graphical translators. 
 } %end blue

\begin{figure}[htbp]
\begin{center}
\includegraphics[width=.75\textwidth]{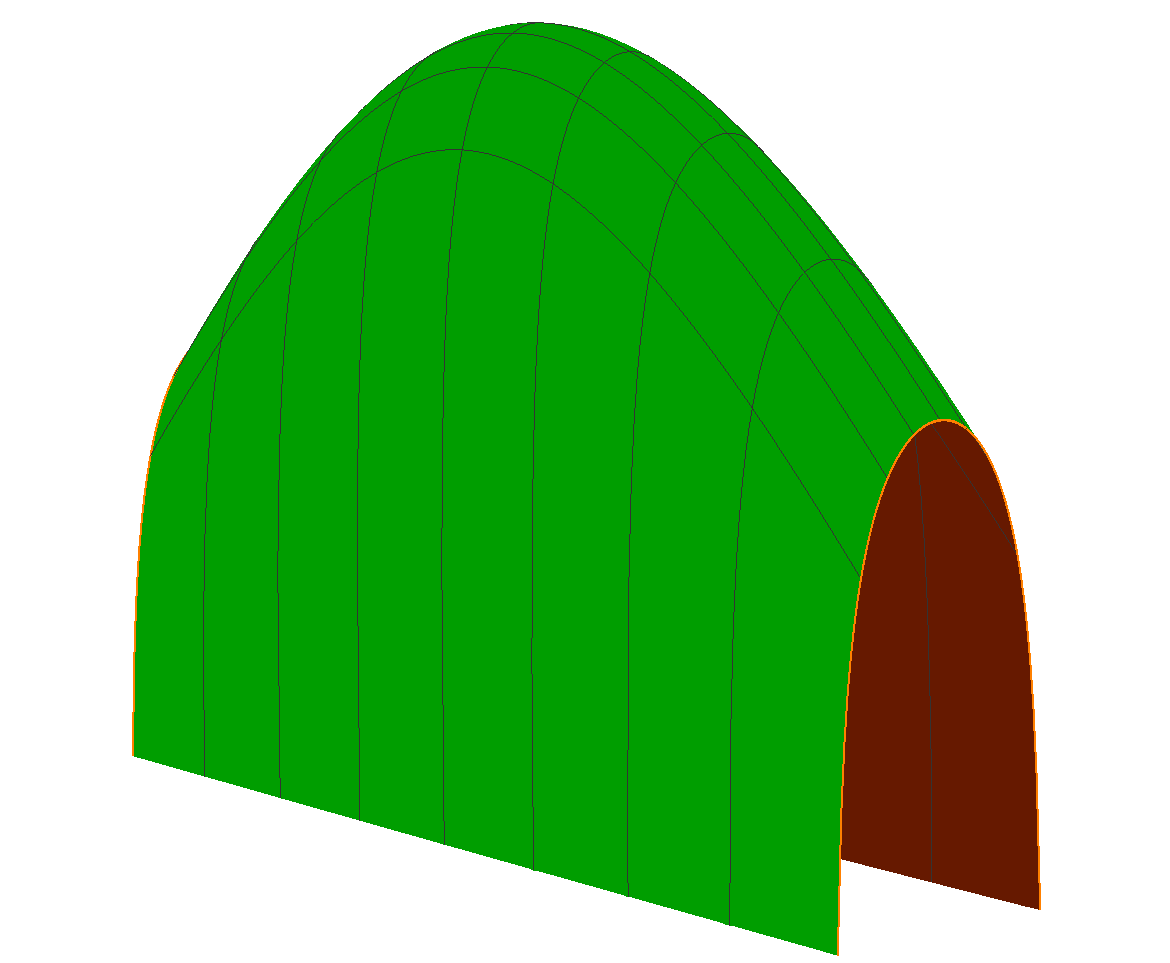}
\caption{The $\Delta$-wing of width $\sqrt{2} \, \pi$ given by Theorem~\ref{main-theorem}.
 As $y \to\pm \infty,$ this $\Delta$-wing
is asymptotic to the tilted grim reapers $\grim_{-\frac{\pi}{4}}$ and $\grim_{\frac{\pi}{4}}$,
respectively.}
\label{intro}
\end{center}
\end{figure}

The main result of this paper is the following theorem.

\begin{theorem}\label{main-theorem}
For every $b>\pi/2$, there is (up to translation) a unique complete, strictly convex
translator 
$
   u^b: \RR\times (-b,b)\to\RR.
$
Up to isometries of $\RR^2$, the only other complete translating graphs in $\RR^3$
are the grim reaper surface, the tilted grim reaper surfaces, and the bowl soliton.
\end{theorem}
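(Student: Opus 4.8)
\emph{Reduction via the convexity dichotomy.}
First I would invoke Theorem~\ref{convexity-theorem}: a complete translating graph $M=\{x_3=u(x')\}$ in $\RR^3$ has nonnegative Gauss curvature, and either $K\equiv 0$ everywhere or $K>0$ everywhere. In the first case $M$ is, up to translation and rotation about a vertical axis, a grim reaper surface, a tilted grim reaper surface, or a vertical plane (Theorem~\ref{convexity-theorem}, or~\cite{mss}); a vertical plane is not the graph of a function over an open subset of $\RR^2$, so here $u$ is a grim reaper or tilted grim reaper. It then remains to show that a \emph{strictly convex} complete translating graph $u:\Omega\to\RR$ is, after an isometry of $\RR^2$ and a vertical translation, either the bowl soliton or one of the $u^b$.

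\emph{Step 1: the domain.}
Here $M$ is a complete, locally convex surface not contained in a plane, hence (van Heijenoort) embedded and equal to the boundary of a convex body, namely the closed subgraph of $u$; so $\Omega$ is open and convex and $|Du|\to\infty$ at $\partial\Omega$. I would then study the Gauss image $G=\nu(M)$, an open subset of the upper hemisphere $S^2_+$, onto which $\nu$ is a diffeomorphism since $K>0$. Where $\partial G$ meets the open hemisphere, a bounded-curvature blow-up of $M$ produces a complete flat translator, necessarily a tilted grim reaper surface, contributing a meridian arc $\nu(\grim_{\pm\theta})$ to $\partial G$; where $\partial G$ meets the equator, one reads off a direction to infinity of $\Omega$; and, using the translator equation, $G$ is geodesically convex. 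These facts force $G$ to be either all of $S^2_+$ --- giving $\Omega=\RR^2$, with $u$ asymptotically a paraboloid --- or a lune bounded by the two meridians $\nu(\grim_\theta)$, $\nu(\grim_{-\theta})$, in which case $\Omega$ is, up to an isometry of $\RR^2$, the strip $\RR\times(-b,b)$ with $\cos\theta=\pi/(2b)$, forced to satisfy $b>\pi/2$ by comparison with $\grim$ (which fits in no slab of width $\le\pi$), and $M$ is asymptotic to $\grim_\theta$ and $\grim_{-\theta}$ at its two ends $x\to\pm\infty$.

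\emph{Step 2: the entire and strip cases.}
If $\Omega=\RR^2$, I would use the asymptotically paraboloidal behaviour to run an Alexandrov moving-plane argument in the vertical planes through the unique point of $M$ with horizontal tangent plane, concluding that $u$ is rotationally symmetric there; equivalently, the derivative of the translator equation in the angular variable is a linear elliptic equation to which the strong maximum principle applies, forcing that derivative to vanish. Rotational symmetry together with the uniqueness of the rotationally symmetric entire translator~\cite{CSS} identifies $u$ with the bowl soliton. If $\Omega=\RR\times(-b,b)$ with $b>\pi/2$: given two strictly convex complete translators $u,v$ on this strip, each is (by Step~1) asymptotic at its two ends to a translate of the \emph{same} pair $\grim_{\pm\theta}$; after a horizontal and a vertical translation I would line up these asymptotics, then slide $v$ vertically until it touches $u$ and apply the strong maximum principle to conclude $u\equiv v$ --- here $u,v\to-\infty$ as $y\to\pm b$ and the matched ends are exactly what keeps the contact point from escaping to infinity. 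Alexandrov reflection in $\{x=0\}$ and $\{y=0\}$ gives the symmetries of $u^b$. For existence: the $\Delta$-wing construction (extending Ilmanen's, as in the later sections) gives, for each $k\in(0,\tfrac12)$, such a translator over some strip of half-width $b(k)>\pi/2$; by the uniqueness just proved $b(\cdot)$ is injective, by a compactness argument it is continuous, and it tends to $\pi/2$ as $k\to 0$ (the grim-reaper limit) and to $\infty$ as $k\to\tfrac12$ (the bowl limit), so it is onto $(\pi/2,\infty)$, producing $u^b$ for every $b>\pi/2$.

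\emph{The main obstacle.}
The translator equation is a well-behaved quasilinear elliptic PDE, so interior estimates are routine and the whole theorem turns on behaviour at infinity. I expect the hard part to be the asymptotic analysis of Step~1 --- showing that near $\partial\Omega$ and along the non-compact ends $M$ is asymptotic to \emph{specific} (tilted) grim reapers, sharply enough that the moving-plane and sliding arguments of Step~2 have no escape at infinity, and ruling out half-plane, wedge, and bounded domains, where the naive grim-reaper barriers degenerate and one must argue more carefully with the Gauss image.
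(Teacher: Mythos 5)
Your overall outline (convexity dichotomy, then a case analysis on the domain, bowl for $\RR^2$, a unique $\Delta$-wing per strip, no halfplanes) matches the paper, but three of your key steps have genuine gaps. First, in Step~1 you let the Gauss-image analysis ``force'' $\Omega$ to be either $\RR^2$ or a strip; this silently disposes of the halfplane case, which is precisely where the real work lies. The structure of $\nu(M)$ (hemisphere, hemisphere minus one semicircle, or a lune --- note you also drop the one-semicircle case) does not by itself determine the shape of $\Omega$: the paper needs Shahriyari's result \cite{shari} for the domain trichotomy, the substantive Proposition~\ref{Gauss-image-is-hemisphere} to show a halfplane domain would force $\nu(M)=H^+$, and then the moving-plane argument of Theorem~\ref{halfplane-theorem} to derive a contradiction. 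Your auxiliary claims (geodesic convexity of $G$, $b>\pi/2$ ``by comparison with $\grim$'') are asserted, not proved. Second, in the entire case, reducing to rotational symmetry is not a routine moving-plane or maximum-principle exercise: the strong maximum principle does not kill the angular derivative without asymptotic control, and establishing paraboloidal asymptotics sharp enough for Alexandrov reflection is essentially the content of X.~J.~Wang's theorem \cite{wang}, which the paper simply invokes.

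Third, your strip-uniqueness argument (line up the grim-reaper asymptotics, slide vertically, touch, strong maximum principle) is exactly the argument that the known asymptotics do not support: Theorem~\ref{spruck-xiao-strip-theorem} gives convergence to $\grim_{\pm\theta}$ only after recentering and with no rate, so after normalizing one end you cannot control the relative shift at the other end, and the sliding contact point can escape to infinity with the two graphs asymptotic but disjoint. The paper circumvents this with a different mechanism: the Rado/Morse-theoretic count of critical points of differences of translators over nested or shifted strips (Proposition~\ref{different-intervals-proposition}), uniqueness of the symmetric wing via matching $Du$ along the axis plus Cauchy--Kowalevski (Theorem~\ref{symmetric-uniqueness}), continuous dependence on $b$, and the spine-comparison squeeze in Theorem~\ref{unique-Delta-wings-theorem}. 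Finally, your existence scheme (continuity in Ilmanen's parameter $k$ plus an intermediate-value argument for $b(k)$) could conceivably work, but the crucial ingredients --- a gradient bound preventing the limiting domain from collapsing to a thinner strip (the role of Proposition~\ref{rectangle-gradient-bound}) and the endpoint behavior $b(k)\to\pi/2$, $b(k)\to\infty$ --- are not supplied; the paper instead constructs $u^b$ directly for each prescribed $b$ from Dirichlet problems on rectangles.
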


We now describe previously known classification results.
Spruck and Xiao recently proved the very powerful theorem 
that every  translating graph in $\RR^3$ is convex~\cite{spruck-xiao}*{Theorem~1.1}.
(See~\cite{survey}*{\S6} for a somewhat simplified presentation of their proof.)
Thus it suffices to classify convex examples.
L.~Shahriyari \cite{shari} proved that if $u: \Omega\subset \RR^2\to \RR$ is a complete
translator, then $\Omega$ is (up to rigid motion) one of the following:
the plane $\RR^2$, a halfplane, or 
a strip $ \RR \times (-b,b)$ with $b\ge \pi/2$.

In~\cite{wang}, X. J. Wang proved that the only entire convex translating graph  is the
bowl soliton, and that there are no complete convex translating graphs defined over
halfplanes.  Thus by the Spruck-Xiao Convexity Theorem, 
the bowl soliton is the only complete translating graph defined over a plane or halfplane.

It remained to classify the translators $u:\Omega\to \RR$ whose domains
are strips.
The main new contributions in this paper are:
\begin{enumerate}
\item\label{existence-item} For each $b>\pi/2$, we prove (Theorem~\ref{unique-Delta-wings-theorem}) 
existence and uniqueness (up to translation)
of a complete translator $u^b: \RR \times (-b,b) \to \RR$ that is not a tilted grim reaper.
\item We give a simpler proof (see Theorem~\ref{halfplane-theorem}) 
that there are no complete graphical translators in $\R^3$ defined over halfplanes in $\R^2$.
\end{enumerate}

We remark that Bourni, Langford, and Tinaglia have recently given a different proof of the existence
(but not uniqueness) in~\eqref{existence-item} \cite{bourni-et-al}.   

%%%%%%%
\section{Preliminaries}

Here we gather the main 
properties of translators
that will be used in this paper.

As observed by Ilmanen~\cite{ilmanen},  a {hyper}surface $M\subset\RR^{n+1}$ 
is a translator if and only if
it is minimal with respect to the Riemannian metric
%\[
%    g_{ij}(x,y,z) = e^{-2x_{n+1}/n} \delta_{ij}.
%\]
\[
    g_{ij}(x_1, \ldots,x_{n+1}) = \exp\left(-\frac2n x_{n+1} \right) \delta_{ij}.
\]
Thus we can freely use curvature estimates and compactness theorems from minimal surface theory;
cf.~\cite{white-intro}*{chapter 3}.
In particular, if $M$ is a graphical translator, then (since vertical translates of it are also $g$-minimal)
$\left<\ee_3,\nu\right>$ is a nowhere vanishing Jacobi field, so $M$ is a stable $g$-minimal surface.
It follows that any sequence $M_i$ of complete translating graphs in $\RR^3$ has a subsequence that
converges smoothly to a translator $M$.  Also, if a translator $M$ is the graph of
a function $u:\Omega\to\RR$, then $M$ and its vertical translates from a $g$-minimal
foliation of $\Omega\times\RR$, from which it follows that $M$ is $g$-area minimizing in $\Omega\times\RR$,
and thus that if $K\subset \Omega\times \RR$ is compact, then the $g$-area of $M\cap K$ is
at most $1/2$ of the $g$-area of $\partial K$.  In this paper, we will consider various sequences
of translators that are manifolds-with-boundary.  In the situations we consider, the area bounds described above
together with standard compactness theorems for minimal surfaces (such as those
in~\cite{white-curvature-estimates}) give smooth, subsequential convergence,
including at the boundary.  
(The local area bounds and bounded topology mean that the only boundary singularities
that could arise would be boundary branch points.   In the situations that occur in this paper, 
obvious barriers preclude boundary branch points.)

The situation for higher dimensional translating graphs is more subtle; see Section~\ref{appendix}.

\begin{theorem}\label{vertical-plane-theorem}
Let $M\subset\RR^3$ be a smooth, connected translator with nonnegative mean curvature.
If the mean curvature vanishes anywhere,  then $M$ is contained
in a vertical plane.
\end{theorem}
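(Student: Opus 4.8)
The plan is to use the strong maximum principle, exploiting the fact (from Ilmanen's observation recalled in the Preliminaries) that a translator is a minimal surface for the conformal metric $g_{ij} = \exp(-2x_{n+1})\delta_{ij}$ on $\RR^3$, and that vertical translates of any translator are again $g$-minimal. The key point is to find a suitable $g$-minimal surface $N$ that touches $M$ at the point $p$ where the mean curvature vanishes, and to which the strong maximum principle for minimal surfaces applies.

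First I would observe that the mean curvature of a translator with respect to the Euclidean metric equals $-\langle \ee_3, \nu\rangle$ (the normal component of $-\ee_3$), so that the Euclidean mean curvature $H$ vanishes at $p$ precisely when the Euclidean tangent plane of $M$ at $p$ is vertical. Now let $P$ be the vertical plane tangent to $M$ at $p$. A vertical plane satisfies $\overrightarrow{H} = 0 = -\ee_3^\perp$, so it too is a translator, hence $g$-minimal. Both $M$ and $P$ are $g$-minimal surfaces passing through $p$ with the same tangent plane there.

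Next I would apply the strong maximum principle (and, if the tangency is of higher order, Aronszajn's unique continuation theorem, or the Hopf-type argument of the minimal surface maximum principle) for $g$-minimal surfaces: two $g$-minimal surfaces that are tangent at an interior point and locally lie on one side of each other must coincide near that point; more precisely, since $M$ and $P$ are both $g$-minimal and tangent at $p$, writing $M$ locally as a graph over $P$ the difference function satisfies a second-order linear elliptic equation with no zeroth-order term, vanishing to second order at $p$, so by the strong maximum principle / unique continuation it vanishes identically near $p$. Hence $M$ coincides with $P$ in a neighborhood of $p$. Finally, the set of points of $M$ at which $M$ agrees with $P$ in a neighborhood is open, and it is closed in $M$ by continuity and the same local argument applied at boundary points of this set; since $M$ is connected, $M \subset P$, a vertical plane.

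The main obstacle is handling the order of contact: the strong maximum principle in its simplest form requires the two surfaces to be locally ordered (one above the other), which is not automatic. To address this I would either appeal to the stronger form of the geometric maximum principle that only requires interior tangency (for instance via reduction to the scalar elliptic PDE satisfied by the height of $M$ over $P$ and then invoking Aronszajn's unique continuation), or, alternatively, compare $M$ with the whole $g$-minimal foliation of $\RR^3$ by vertical translates of $P$ and use that $\langle \ee_3, \nu\rangle$ is a Jacobi field which, together with the sign condition $H \ge 0$, forces the ordering. Once the coincidence of $M$ and $P$ near $p$ is established, the connectedness/continuation step is routine.
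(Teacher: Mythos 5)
Your reduction to a tangency principle between $M$ and its vertical tangent plane $P$ at $p$ contains a genuine gap at the decisive step. Tangency of two minimal surfaces at an interior point does \emph{not} force them to coincide: a catenoid and its tangent plane at any point (or the Enneper surface and its tangent plane) are tangent and distinct, and the same phenomenon is possible for $g$-minimal surfaces. The difference function you introduce satisfies a linear elliptic equation and vanishes to second order at $p$, but second-order vanishing triggers neither the strong maximum principle (which needs a sign, i.e.\ local ordering) nor Aronszajn's unique continuation theorem (which needs vanishing to infinite order); a harmonic function such as $x^2-y^2$ shows the implication you want is false at this level of generality. Your proposed repairs do not close this: there is no ``tangency-only'' geometric maximum principle for minimal surfaces, and the ``foliation by vertical translates of $P$'' is vacuous, since a vertical plane is invariant under vertical translation. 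Moreover, the hypothesis $H\ge 0$ says that $\left<\nu,\ee_3\right>$ has a sign, i.e.\ it constrains the direction of the normal; it does not imply that $M$ lies locally on one side of $P$ near $p$, so the claimed ordering is not established.

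The ingredient you mention in passing is in fact the whole proof, used differently (and this is the paper's argument): since vertical translates of $M$ are again $g$-minimal, the function $h=\left<\nu,\ee_3\right>$, which up to sign \emph{is} the mean curvature, is a Jacobi field on $M$, i.e.\ a solution of a linear second-order elliptic equation (equivalently $\widetilde\Delta h+|A|^2h=0$ for the drift Laplacian). By hypothesis $h$ has a sign on the connected surface $M$ and vanishes at $p$, so the strong maximum principle applied directly to this \emph{scalar} function gives $h\equiv 0$ -- no comparison of two surfaces, and hence no ordering issue, ever arises. Then $\nu$ is everywhere horizontal, so $\ee_3$ is everywhere tangent and $M\subset\Gamma\times\RR$ for a plane curve $\Gamma$; since $\overrightarrow H=-\ee_3^\perp\equiv 0$, the cylinder is minimal, forcing $\Gamma$ to be a straight line and $M$ to lie in a vertical plane. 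If you want to salvage your two-surface formulation, you must first prove $H\equiv 0$ by this scalar argument; as written, the step from ``tangent at $p$'' to ``coincide near $p$'' is unjustified and is exactly where the hypothesis $H\ge 0$ has to enter.
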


\begin{proof}
As mentioned above, the mean curvature $\left<\nu, \ee_3\right>$ is a Jacobi field.
By hypothesis, it is nonnegative.
By the strong maximum principle, if it vanishes anywhere,
it vanishes everywhere, so that $M$ is contained in $\Gamma\times\RR$ for some curve in $\RR^2$.
The result follows immediately.
\end{proof}

\begin{theorem}\label{convexity-theorem}
Let $M\subset \RR^3$ be a complete translator with positive mean curvature.
Then it is a graph and the Gauss curvature is everywhere nonnegative.
 If the Gauss curvature vanishes anywhere,
then it vanishes everywhere and $M$ is a grim reaper surface or tilted grim reaper
surface.
\end{theorem}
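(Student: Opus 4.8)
\emph{Plan.} I would use mean convexity to show that $M$ is a graph, invoke the Spruck--Xiao convexity theorem to conclude $K\ge 0$, and then treat the equality case by a strong maximum principle for the smallest principal curvature together with the classification of complete flat surfaces in $\RR^3$. To begin, positive mean curvature means that $\overrightarrow{H}=-\ee_3^{\perp}$ vanishes nowhere, i.e. $\ee_3$ is never tangent to $M$, i.e. $\langle\nu,\ee_3\rangle$ never vanishes; by connectedness orient $M$ so that $\langle\nu,\ee_3\rangle>0$ everywhere. Then every vertical line is transverse to $M$, so $M$ is locally a graph over $\RR^2$. To make this global I would consider the vertical translates $M_s:=M+s\,\ee_3$ ($s\in\RR$), which are again translators, hence $g$-minimal for the Ilmanen metric, and show that they are pairwise disjoint. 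The coincidence $M=M_s$ with $s\neq 0$ cannot occur: a complete translator transverse to every vertical line and invariant under a nontrivial vertical translation would, after passing to the periodic quotient, project to $\RR^2$ by a proper local diffeomorphism, hence (the plane being simply connected) by a diffeomorphism, so $M$ would be a graph over $\RR^2$ --- impossible, since a graph cannot be vertically periodic. And if $M\cap M_{s_0}\neq\emptyset$ for some $s_0>0$, then a first point of contact as $s$ increases from $0$ would be an interior tangential touching of two connected $g$-minimal surfaces --- here one uses properness of $M$ (a consequence of stability and the curvature estimates recalled above) to rule out the contact escaping to infinity --- and the strong maximum principle would then give $M=M_\tau$ for some $\tau\in(0,s_0]$, contradicting the previous sentence. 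Hence the $M_s$ are disjoint, so each vertical line meets $M$ at most once and $M$ is the graph of a function $u\colon\Omega\to\RR$. By the Spruck--Xiao theorem \cite{spruck-xiao} (see also \cite{mss}), $u$ is convex, so $K\ge 0$ on $M$.

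\emph{The equality case.} Suppose $K(p)=0$. Choosing the orientation for which both principal curvatures are nonnegative, write $0\le\kappa_1\le\kappa_2$; then $\kappa_1$ is continuous and nonnegative, $\kappa_1(p)=0$, and $K=0$ precisely where $\kappa_1=0$. Being pointwise the smallest eigenvalue of the second fundamental form of $M$ --- which, for a translator, satisfies a Simons-type drift-elliptic system --- the function $\kappa_1$ satisfies, in the viscosity sense, a linear elliptic inequality $\Delta_M\kappa_1+\langle X,\nabla^M\kappa_1\rangle+c\,\kappa_1\le 0$ with locally bounded coefficients $X,c$. Since $\kappa_1\ge 0$ attains the interior minimum value $0$ at $p$, the strong maximum principle forces $\kappa_1\equiv 0$, hence $K\equiv 0$ on $M$.

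\emph{The flat case.} A complete connected surface in $\RR^3$ with $K\equiv 0$ is, by the Hartman--Nirenberg theorem, a cylinder $\gamma\times L$ over a complete plane curve $\gamma$, with $L$ a line. If $L$ were vertical, then $\ee_3$ would be tangent to $M$ everywhere and $\overrightarrow{H}\equiv 0$, contrary to hypothesis; so $L$ is non-vertical, and after a rotation about the vertical axis we may take $L$ in the $x_1x_3$-plane, at angle $\theta\in[0,\pi/2)$ to the horizontal. The translator equation for $\gamma\times L$ reduces, in the plane orthogonal to $L$, to the (rescaled) grim reaper ODE $\kappa_\gamma=-\cos\theta\,\langle\nu_\gamma,w\rangle$, where $w$ is the unit vector of that plane with positive $\ee_3$-component. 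Its only complete solutions are straight lines --- which would make $M$ a vertical plane, excluded since $\overrightarrow{H}$ vanishes nowhere on $M$ --- and translates of the grim reaper curve scaled by $1/\cos\theta$. Solving for $x_3$ in terms of $(x_1,x_2)$ then identifies $M$ with the grim reaper surface $\grim$ when $\theta=0$ and with the tilted grim reaper surface $\grim_\theta$ of \eqref{tilted-grim-reaper-equation} when $\theta\in(0,\pi/2)$, which completes the proof.

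\emph{Main obstacle.} I expect the graph step to be the crux --- excluding a complete, multi-sheeted translator with $\langle\nu,\ee_3\rangle>0$; turning the disjointness sketch above into a proof requires a careful first-contact argument and, above all, control of near-contact points at infinity (via properness and the local curvature and area estimates). A secondary technical point is justifying the strong maximum principle for $\kappa_1$ across the umbilic set.
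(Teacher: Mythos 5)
The equality case and the flat case of your argument are fine and essentially interchangeable with the paper's: where you use a viscosity Simons-type inequality for $\kappa_1$ and the strong maximum principle, the paper invokes the strong maximum principle for the quotient $\kappa_1/H$ (citing White), and where you use Hartman--Nirenberg plus the grim-reaper ODE, the paper argues that a flat translator is ruled with a grim reaper surface tangent along a ruling and concludes by Cauchy--Kowalevski (or cites Mart\'in--Savas-Halilaj--Smoczyk). The genuine gap is the graph step, which you yourself flag as the crux and leave as a sketch. Your sliding argument with the vertical translates $M_s=M+s\,\ee_3$ does not close: the first point of contact may escape to infinity, and properness of $M$ would not prevent that even if you had it; moreover properness is not a routine consequence of ``stability and curvature estimates'' for a complete, not-yet-graphical translator. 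Worse, before graphicality you have no local area bounds: the paper's area bounds come from the fact that a \emph{graph} and its vertical translates foliate a cylinder, i.e.\ from exactly the disjointness you are trying to establish, so the compactness you would need to analyze near-contact points at infinity only yields lamination limits and the argument becomes a substantial project rather than a remark. As written, the statement ``$M$ is a graph'' is not proved.

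The gap is also unnecessary, and this is where your route genuinely diverges from the paper's. Spruck and Xiao prove convexity for \emph{complete immersed two-sided} translators with nonnegative mean curvature, not only for graphs; the form quoted in the introduction of this paper (``every translating graph is convex'') is a special case, and by adopting it you forced yourself to prove graphicality first. The correct order --- the one the paper uses implicitly by citing \cite{spruck-xiao} for the first sentence of Theorem~\ref{convexity-theorem}, with details in \cite{survey} --- is the reverse: first conclude $K\ge 0$ from mean convexity of the complete surface; then, in the non-flat case, a complete convex non-flat surface bounds a convex body (Sacksteder/van Heijenoort), and since $H>0$ means the unit normal $\nu$ is never horizontal, the connected boundary of that convex body, whose Gauss image avoids the equator, meets every vertical line at most once and is therefore a graph. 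If you replace your sliding argument by this deduction (or simply cite Spruck--Xiao in its immersed form for both conclusions), the rest of your proposal goes through.
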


\begin{proof}
Nonnegativity of the Gauss curvature is the main result of~\cite{spruck-xiao}.
If the curvature vanishes anywhere, it vanishes everywhere because
$\kappa_1/H$ satisfies a strong maximum principle (where $0\le\kappa_1\le\kappa_2$ are
the principle curvatures).  See for example~\cite{white-nature}*{Theorem~3}.

The last assertion follows from work of Martin, Savas-Halilaj, and Smoczyk \cite{mss}*{Theorem~B}.
We can also prove it directly as follows.
Suppose that $M$ is a translator with Gauss curvature $0$.
By elementary differential geometry, $M$ is a ruled surface and the Gauss map is constant along
the straight lines.  Consequently if $L$ is a line in $M$, we can find
a grim reaper surface (tilted unless $L$ is horizontal) such that $\Sigma$ is 
tangent to $M$ along $L$.  By Cauchy-Kowalevski, $M=\Sigma$.
\end{proof}

\begin{corollary}\label{divergent-sequence-corollary}
Suppose that $M\subset \RR^3$ is a complete graphical translator.   If $p_i$ is a divergent
sequence in $M$, then $M-p_i$ converges smoothly (after passing
to a subsequence) to a vertical plane, a grim reaper surface,
or a tilted grim reaper surface.
\end{corollary}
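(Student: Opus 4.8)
The plan is to extract a smooth limit from the sequence $M-p_i$ and then identify it by means of Theorems~\ref{vertical-plane-theorem} and~\ref{convexity-theorem}; the real content will be an argument that the Gauss curvature of $M$ cannot concentrate along a divergent sequence.

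First I would note that each $M-p_i$ is again a complete translating graph, so by the stability of graphical translators together with the local $g$-area bound recalled in the Preliminaries, after passing to a subsequence $M-p_i$ converges smoothly on compact subsets of $\RR^3$ to a complete translator $M_\infty$; it contains the origin because $p_i\in M$. Since the $M-p_i$ are embedded the convergence has multiplicity one, and the local area bound rules out extra components, so I may take $M_\infty$ connected (this last point I would verify separately, but it is the usual consequence of the density bound). On $M_\infty$ the function $\langle\nu,\ee_3\rangle$ is a Jacobi field, and it is nonnegative, being the smooth limit of the corresponding functions on $M-p_i$, each of which is positive for the upward unit normal. By the strong maximum principle there are two cases. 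If $\langle\nu,\ee_3\rangle\equiv 0$ on $M_\infty$, then the mean curvature of $M_\infty$ vanishes identically, so by Theorem~\ref{vertical-plane-theorem} $M_\infty$ lies in a vertical plane; being complete and connected it equals that plane, and we are done. Otherwise $\langle\nu,\ee_3\rangle>0$ everywhere, i.e.\ $M_\infty$ has positive mean curvature, and Theorem~\ref{convexity-theorem} says $M_\infty$ is a graph with Gauss curvature $K_{M_\infty}\ge 0$.

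It remains, in this second case, to show that $K_{M_\infty}$ vanishes at some point; Theorem~\ref{convexity-theorem} will then identify $M_\infty$ as a grim reaper surface or a tilted grim reaper surface, completing the proof. This is the step I expect to be the main obstacle, and I would argue by contradiction using two inputs. On the one hand, by Theorem~\ref{convexity-theorem} the surface $M$ itself has $K_M\ge 0$, and since (by Shahriyari's theorem~\cite{shari}) $M$ is a graph over a domain homeomorphic to $\RR^2$, hence topologically a plane, the theorem of Cohn--Vossen gives $\int_M K_M\,dA\le 2\pi<\infty$. On the other hand, the smooth compactness of complete translating graphs furnishes a universal constant bounding $|A|$ and $|\nabla A|$ on every complete translating graph in $\RR^3$ --- otherwise one could translate the offending points to the origin and pass to a limit that fails to be smooth. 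Now suppose $K_{M_\infty}(0)=c>0$. Since Gauss curvature is translation invariant, $K_M(p_i)=K_{M-p_i}(0)\to c$, while the $p_i$ are divergent in $M$; by the uniform bound on $|\nabla A|$ one gets $K_M\ge c/2$ on the intrinsic geodesic ball $B_\delta(p_i)\subset M$ for a fixed $\delta>0$ and all large $i$, and the uniform bound on $|A|$ gives a fixed positive lower bound for the areas of these balls. Because the $p_i$ leave every compact subset of $M$ and closed geodesic balls in $M$ are compact, a greedy selection extracts a subsequence along which the balls $B_\delta(p_i)$ are pairwise disjoint, whence $\int_M K_M\,dA=\infty$, contradicting Cohn--Vossen. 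Hence $K_{M_\infty}(0)=0$, which is what was needed.
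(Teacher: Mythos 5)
Your argument is correct in substance, but the heart of it is genuinely different from the paper's. Both proofs begin the same way: stability plus the local $g$-area bounds give smooth subsequential convergence of $M-p_i$ to a complete translator, and the dichotomy for the Jacobi field $\left<\nu,\ee_3\right>$ (identically zero versus everywhere positive) reduces everything, via Theorems~\ref{vertical-plane-theorem} and~\ref{convexity-theorem}, to showing that the limit has a point of vanishing Gauss curvature. For that key step the paper argues through the Gauss map: if $M$ is not strictly convex it is already a (tilted) grim reaper or plane, and if it is strictly convex then $\nu$ is a diffeomorphism onto an open subset $W$ of the upper hemisphere, so the Gauss image of any limit along a divergent sequence lies in $\partial W$, has empty interior, and hence the limit is flat. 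You instead rule out $K_{M_\infty}(0)=c>0$ by a curvature-quantization argument: uniform bounds on $|A|$ and $|\nabla A|$ for complete translating graphs (obtained from the very compactness statement in the Preliminaries), a definite lower area bound for intrinsic balls, disjointness of balls around the divergent $p_i$, and the Cohn--Vossen bound $\int_M K\le 2\pi$ (using Spruck--Xiao for $K_M\ge0$ and Shahriyari for the topology of $\Omega$). This works, and it isolates a nice general principle (finite total curvature forbids curvature concentration at infinity), at the cost of invoking more machinery; note that your total-curvature bound could also be obtained directly from injectivity of the Gauss map of the strictly convex $M$ into the hemisphere, which would bring your proof close to the paper's. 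One small caveat: your claim that the local area bound forces the limit to be connected is not correct --- e.g.\ translating a $\Delta$-wing toward an edge of its strip produces a limit containing two vertical planes --- but this is harmless, since your argument (like the paper's) only concerns the component of $M_\infty$ through the origin, and the corollary should be read as describing that component.
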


\begin{proof} 
If $M$ is not strictly convex, then by Theorem~\ref{convexity-theorem} it is a grim reaper surface
or tilted grim reaper surface, and the corollary is trivially true.

Thus suppose that $M$ is strictly convex.  Then the Gauss map maps
$M$ diffeomorphically to an open subset of the upper hemisphere of $S^2$.
Since $M-p_i$ is a stable minimal surface with respect to the Ilmanen metric,
a subsequence will converge smoothly to a complete translator $M'$.  The Gauss
image of $M'$ lies in the boundary of the Gauss image  of $M$ and so has no interior.
Thus $M'$ has zero Gauss curvature.  
By Theorems~\ref{vertical-plane-theorem} and~\ref{convexity-theorem}, $M'$ is a vertical plane or a grim reaper
surface or a tilted grim reaper surface.
\end{proof}

We also use the following result of Spruck and Xiao~\cite{spruck-xiao}*{Theorem~1.5}:

\begin{theorem}\label{spruck-xiao-strip-theorem}
Suppose that $u: \RR \times (-b,b)$ is a complete, strictly convex translator.
Then $u(x,y)\equiv u(x,-y)$.
Also,
\[
    u(x+\zeta,y)- u(\zeta,0)
\]
converges smoothly as $\zeta\to-\infty$ to the tilted grim reaper surface
\begin{equation*}
   \grim_{\theta}: \RR \times (-b,b)\to\RR
\end{equation*}
where $\theta= \arccos(2b/\pi)$ (see~\eqref{tilted-grim-reaper-equation}),
and it converges smoothly as $\zeta\to \infty$ to the tilted grim
reaper surface
\begin{equation*}
   \grim_{-\theta}: \RR \times (-b,b)\to\RR.
\end{equation*}
\end{theorem}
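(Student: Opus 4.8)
The plan is to establish the two asymptotic assertions first and then to deduce the $y$-symmetry from them. Write $M$ for the graph of $u$. Since $M$ is strictly convex with positive mean curvature, $D^2u$ is negative definite; in particular $u$ is strictly concave in the $x$-direction, so for each $y$ the function $x\mapsto u_x(x,y)$ is strictly decreasing, with limits $p_-(y):=\lim_{x\to-\infty}u_x(x,y)$ and $p_+(y):=\lim_{x\to+\infty}u_x(x,y)$ in $[-\infty,+\infty]$ satisfying $p_-(y)>p_+(y)$. Completeness together with concavity force $u_y\to\mp\infty$, and hence vertical tangent planes, as $y\to\pm b$; a comparison with grim reaper barriers shows moreover that $u\to-\infty$ there.

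Next I would analyze the translates $u_\zeta:=u(\,\cdot+\zeta,\cdot)-u(\zeta,0)$ as $\zeta\to-\infty$. By the compactness for complete translating graphs recalled in Section~2, any sequence $\zeta_i\to-\infty$ admits a subsequence along which the graphs $M_{\zeta_i}$ of $u_{\zeta_i}$ converge smoothly to a complete translator $M_\infty$ through the origin. By Theorems~\ref{vertical-plane-theorem} and~\ref{convexity-theorem}, $M_\infty$ is either a vertical plane or a convex graph; a vertical-plane limit is contained in $\RR\times[-b,b]\times\RR$ (as are the $M_{\zeta_i}$), hence has the form $\{y=y_0\}$, and since $0\in M_\infty$ we would get $M_\infty=\{y=0\}$ --- a possibility I will exclude below. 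So $M_\infty$ is the graph of a convex function $u_\infty$, and from the $C^1$-convergence and the monotonicity of $u_x$ we get $\partial_x u_\infty(x,y)=\lim_i u_x(x+\zeta_i,y)=p_-(y)$, independent of $x$ and in particular finite. Thus $M_\infty$ is a \emph{ruled} translator, swept out by the lines $t\mapsto(t,y,g(y)+t\,p_-(y))$; exactly as in the alternative proof of Theorem~\ref{convexity-theorem}, it must be a grim reaper surface or a tilted grim reaper surface $\grim_{\theta_-}$, and since the $u_{\zeta_i}$ are all defined on $\RR\times(-b,b)$ and degenerate only at $y=\pm b$, a completeness argument identifies $M_\infty$ with the \emph{complete} surface $\grim_{\theta_-}$, forcing $\cos\theta_-=\pi/(2b)$. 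The same reasoning at $\zeta\to+\infty$ produces $\grim_{\theta_+}$ with $\cos\theta_+=\pi/(2b)$, and the strict inequality $p_-(y)>p_+(y)$ --- i.e.\ $\tan\theta_->\tan\theta_+$ --- then forces $\theta_+=-\theta_-$ and $\theta_-=\theta>0$. Since the limit is now uniquely determined, independently of the subsequence, the full convergences $u_\zeta\to\grim_{\pm\theta}$ follow.

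For the symmetry I would run Alexandrov's moving-plane method with the planes $\{y=\lambda\}$, $\lambda$ decreasing from $b$ to $0$. The translator equation is invariant under $y\mapsto 2\lambda-y$, so on $\RR\times(\lambda,b)$ the function $w_\lambda(x,y):=u(x,y)-u(x,2\lambda-y)$ satisfies a linear, homogeneous, elliptic equation. It vanishes on $\{y=\lambda\}$; it tends to $-\infty$ as $y\to b$ (there $u\to-\infty$ while $u(\,\cdot\,,2\lambda-b)$ is finite, $2\lambda-b$ lying interior to $(-b,b)$); and, by the asymptotics just established, as $|x|\to\infty$ it is asymptotic to $\grim_{\pm\theta}(x,y)-\grim_{\pm\theta}(x,2\lambda-y)$, which is strictly negative for $y\in(\lambda,b)$ because the $y$-profile of a grim reaper is even and strictly decreasing in $|y|$ while $|y|>|2\lambda-y|$ when $y>\lambda>0$. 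A maximum principle of Phragm\'en--Lindel\"of type on the strip then gives $w_\lambda\le 0$, and the strong maximum principle upgrades this to $w_\lambda<0$ in the interior: an interior zero would force $w_\lambda\equiv 0$, i.e.\ $u$ symmetric across $\{y=\lambda\}$, whence $u\to-\infty$ along the interior line $\{y=2\lambda-b\}$, which is absurd. Letting $\lambda\downarrow 0$ yields $u(x,y)\le u(x,-y)$ for $y>0$; applying the same argument to the translator $u(x,-y)$ gives the reverse inequality, and hence $u(x,y)\equiv u(x,-y)$.

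The chief obstacle is the control at infinity, which enters in two ways. First, one must exclude the vertical-plane limit $M_\infty=\{y=0\}$ (which is genuinely a translator, hence a priori permitted) and show that the limiting grim reaper fills the \emph{whole} strip $\RR\times(-b,b)$ rather than a proper sub-strip; I expect this to rest on an a priori bound such as $\sup_M|u_x|<\infty$, proved by sliding tilted grim reaper surfaces $\grim_{\theta'}$ with $\theta'$ near $\pi/2$ --- these live over strips strictly wider than $\RR\times(-b,b)$ and hence restrict to honest translators over it --- against $M$, a first interior contact being ruled out by the strong maximum principle and Cauchy--Kowalevski since $M$ is strictly convex and $\grim_{\theta'}$ is not. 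Second, one must make the Phragm\'en--Lindel\"of step rigorous on the noncompact strip $\RR\times(\lambda,b)$; here the sharp asymptotic description obtained in the second step is precisely what licenses the maximum principle. Everything else is routine bookkeeping with the maximum principle and with the curvature estimates and compactness for stable minimal --- equivalently, translating --- surfaces recalled in Section~2.
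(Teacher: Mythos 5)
Your outline reproduces the easy half of the argument, but the crux --- which you yourself flag as the ``chief obstacle'' --- is never actually proved. Subsequential smooth convergence of $u(\cdot+\zeta,\cdot)-u(\zeta,0)$ to a vertical plane or a (tilted) grim reaper is essentially Corollary~\ref{divergent-sequence-corollary}; the hard point is to exclude that the limit is a vertical plane or a tilted grim reaper over a \emph{proper} sub-strip of $\RR\times(-b,b)$. Completeness of the limit does not help: a tilted grim reaper over a narrower strip is itself a complete translator and is a perfectly admissible subsequential limit a priori (the rest of the graph can simply escape to $-\infty$ in height over the complementary $y$-values), so your ``completeness argument identifies $M_\infty$ with the complete surface'' over the full strip is not an argument. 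Your proposed remedy --- a sliding-barrier argument giving $\sup_M|u_x|<\infty$ --- is doubly insufficient. First, the sliding sketch does not close: on noncompact surfaces a first interior contact need not be attained (contact can occur at infinity), and the strong maximum principle/Cauchy--Kowalevski apply only at an actual tangency point, which your sketch does not produce. Second, even granting $\sup|u_x|<\infty$, this does not prevent collapse: collapse onto a sub-strip is a blow-up of $u_y$ near the edges of that sub-strip, and a bound on $u_x$ says nothing about $u(\zeta_i,y_0)-u(\zeta_i,0)=\int_0^{y_0}u_y(\zeta_i,t)\,dt$, which is exactly what must stay bounded for the limit to be a graph over all of $(-b,b)$. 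What is needed is an estimate of the form $(b-|y|)\sqrt{1+|Du|^2}\le C$, controlling the \emph{full} gradient by the distance to the strip boundary; this is precisely how Spruck--Xiao use the maximum principle for $\eta v$ (Theorem~\ref{spruck-xiao-gradient-bound}), and it is the pattern followed in this paper in Proposition~\ref{rectangle-gradient-bound} and Lemma~\ref{spruck-xiao-type-lemma}. Note also that the paper does not reprove this theorem: it is quoted from Spruck--Xiao, and the paper explicitly identifies the step you have skipped as ``the difficult part.''

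Two secondary points. The boundary behavior invoked in your moving-planes step ($u\to-\infty$ as $y\to\pm b$ for fixed $x$, and the uniformity of the asymptotics needed for a Phragm\'en--Lindel\"of principle on the unbounded strip $\RR\times(\lambda,b)$) is asserted rather than proved; the smooth convergence to the tilted grim reapers holds only on compact subsets of the strip, so by itself it does not control the sign of $w_\lambda$ in the corner regime where $|x|\to\infty$ and $y\to b$ simultaneously --- again the missing quantitative control near the edges. (The symmetry via Alexandrov moving planes is indeed how Spruck--Xiao proceed, so that part of your strategy is aligned; and your relation $\cos\theta=\pi/(2b)$ is the correct one.)
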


Spruck and Xiao prove that $u(x,y)\equiv u(x,-y)$ by Alexandrov moving planes.
Subsequential convergence of $u(x,y+\zeta)-u(0,\zeta)$ to a tilted grim
reaper surface is relatively easy (see Corollary~\ref{divergent-sequence-corollary}); the difficult part
is showing that such a subsequential limit is a graph over the entire strip $\RR\times (-b,b)$
rather than over a smaller strip.  Spruck and Xiao overcome that difficulty by an ingenious
use of Theorem~\ref{spruck-xiao-gradient-bound} below.

\begin{corollary}\label{spruck-xiao-strip-corollary}
The function $u$ attains its maximum at a point $(x_0,0)$.
\end{corollary}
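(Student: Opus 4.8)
The plan is to reduce the two-variable maximum problem to a one-variable question on the center line $\{y=0\}$, exploiting the $y$-symmetry and the grim-reaper asymptotics supplied by Theorem~\ref{spruck-xiao-strip-theorem} together with the concavity of $u$.

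First I would record that $u$ is a strictly concave function. Strict convexity of the graph means its Gauss curvature is positive, so $\det D^2u>0$ on the whole strip; hence $D^2u$ is definite at every point, and since $\RR\times(-b,b)$ is connected it has the same sign of definiteness throughout. By Theorem~\ref{spruck-xiao-strip-theorem} the translates $u(x+\zeta,y)-u(\zeta,0)$ converge smoothly as $\zeta\to-\infty$ to $\grim_\theta$, and $\partial_{yy}\grim_\theta(x,y)=-\sec^2(y\cos\theta)<0$, so $\partial_{yy}u<0$ somewhere; therefore $D^2u$ is negative definite everywhere and $u$ is strictly concave. Consequently $g(x):=u(x,0)$ is strictly concave on $\RR$, and for each fixed $x$ the slice $y\mapsto u(x,y)$ is concave and, again by Theorem~\ref{spruck-xiao-strip-theorem}, even; an even concave function is maximized at the center, so $u(x,y)\le u(x,0)=g(x)$ for all $(x,y)$. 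Thus $\sup u=\sup_{x}g(x)$, and it suffices to show that $g$ attains its maximum.

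Next I would extract the asymptotic slopes of $g$ from Theorem~\ref{spruck-xiao-strip-theorem}. Since the convergence there is smooth, differentiating in $x$ and evaluating at the origin gives $g'(\zeta)=\partial_xu(\zeta,0)\to\partial_x\grim_\theta(0,0)=\tan\theta$ as $\zeta\to-\infty$ and $g'(\zeta)\to\partial_x\grim_{-\theta}(0,0)=-\tan\theta$ as $\zeta\to+\infty$, where $\theta\in(0,\pi/2)$, so $\tan\theta>0$. (Note $\theta=0$ is automatically excluded, since a strictly decreasing function cannot have the same limit at $-\infty$ and at $+\infty$.) Hence $g'$ is continuous and strictly decreasing with a positive limit at $-\infty$ and a negative limit at $+\infty$, so it vanishes at a unique point $x_0$, at which the concave function $g$ attains its global maximum. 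Therefore $\sup u=g(x_0)=u(x_0,0)$, i.e.\ $u$ attains its maximum at $(x_0,0)$. The statement is genuinely a corollary: all the analytic input — symmetry in $y$ and smooth convergence of translates to $\grim_{\pm\theta}$ — comes from Theorem~\ref{spruck-xiao-strip-theorem}, and the only thing needing care is the elementary bookkeeping converting that smooth convergence into the two limits of $g'$ and into the reduction $\sup u=\max_x u(x,0)$; I do not expect any real obstacle.
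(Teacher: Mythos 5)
Your argument is correct and takes essentially the same route as the paper: reduce everything to the center line $\{y=0\}$ using the $y$-symmetry and the concavity of $u$ (strict convexity of the graph), and use the tilted-grim-reaper asymptotics of Theorem~\ref{spruck-xiao-strip-theorem} to force the maximum of $x\mapsto u(x,0)$ to exist. The only cosmetic difference is that the paper gets the maximizer from $u(x,0)\to-\infty$ as $|x|\to\infty$ and then uses $Du(x_0,0)=0$ plus convexity, whereas you get it from the slope limits $\pm\tan\theta$ of $\partial_x u(x,0)$ together with the reduction $\sup u=\sup_x u(x,0)$; both are immediate consequences of the same theorem.
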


\begin{proof}
By Theorem~\ref{spruck-xiao-strip-theorem}, $\lim_{|x|\to\infty} u(x,0)=-\infty$, so the function $x\to u(x,0)$
attains its maximum at some $x_0$,  By symmetry, $Du(x_0,0)=0$,
so by convexity, $u$ attains its maximum at $(x_0,0)$.
\end{proof}

The following very useful gradient bound, which plays a crucial role in this paper, 
 is also due to Spruck and Xiao~\cite{spruck-xiao}:

\begin{theorem} \label{spruck-xiao-gradient-bound} Suppose that $M$ is a translating graph in $\RR^{n+1}$, 
and let $v=\left<\ee_{n+1},\nu\right>^{-1}$, where $\nu$ is the upward pointing unit normal to $M$.
Suppose that $\eta:\RR^{n+1}\to\RR$ is an affine function invariant under vertical translations.
Then the function $\eta v$ on $M$ cannot have a local maximum
at an interior point where $\eta$ is positive.
\end{theorem}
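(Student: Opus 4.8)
The plan is to write the maximum-principle computation for $\eta v$ in terms of the minimal-surface structure of $M$. Recall from the Preliminaries that $M$ is minimal for Ilmanen's metric $g = e^{-2x_{n+1}/n}\delta$, that $v = \langle \ee_{n+1},\nu\rangle^{-1}$ is (up to the conformal factor) comparable to the length of the Euclidean-to-$g$ normal, and — most importantly — that $v^{-1} = \langle \ee_{n+1},\nu\rangle$ is a positive Jacobi field on $M$: it satisfies the linearized minimal-surface (stability) operator $L w = \Delta_M w + (|A|^2 + \operatorname{Ric}(\nu,\nu)) w = 0$, where here all quantities are Euclidean but with the extra translator terms, equivalently $\Delta_M \langle \ee_{n+1},\nu\rangle = -|A|^2\langle \ee_{n+1},\nu\rangle + (\text{first-order translator correction})$. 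The first step is therefore to record the precise PDE satisfied by $v$ on $M$; from $Lv^{-1}=0$ one gets, by the standard reciprocal computation, an inequality of the form $\Delta_M v \ge \tfrac{2}{v}|\nabla_M v|^2 + |A|^2 v + (\text{drift})\cdot\nabla_M v$, i.e. $v$ is a subsolution of a nice drift-Laplace operator with a good (positive) zeroth-order contribution $|A|^2 v$.

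The second step is to handle the affine factor $\eta$. Since $\eta$ is affine on $\RR^{n+1}$ and invariant under vertical translation, $\eta = \langle a, \cdot\rangle + c$ for some horizontal vector $a$ and constant $c$, so the restriction $\eta|_M$ is an ambient-linear function restricted to $M$; such functions satisfy $\Delta_M \eta = \overrightarrow{H}\cdot a = -\ee_{n+1}^\perp \cdot a = 0$ because $a$ is horizontal. (This is the one place the translator equation \eqref{general-translator-equation} enters, and it is exactly why vertical-translation-invariance of $\eta$ is in the hypothesis.) So $\eta$ is $\Delta_M$-harmonic, and moreover $|\nabla_M \eta|\le |a|$ is bounded. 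Combining, at an interior point where $\eta>0$ we compute $\Delta_M(\eta v)$ using the product rule $\Delta_M(\eta v) = \eta\,\Delta_M v + 2\nabla_M\eta\cdot\nabla_M v + v\,\Delta_M\eta = \eta\,\Delta_M v + 2\nabla_M\eta\cdot\nabla_M v$, and substitute the subsolution inequality for $\Delta_M v$.

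The third step is the maximum-principle argument itself. Suppose $\eta v$ had an interior local maximum at $p$ with $\eta(p)>0$. At $p$ we have $\nabla_M(\eta v)=0$, hence $\eta\nabla_M v = -v\nabla_M\eta$, which lets us eliminate $\nabla_M v$: $\nabla_M v = -(v/\eta)\nabla_M\eta$. Plugging this and the inequality for $\Delta_M v$ into the formula for $\Delta_M(\eta v)$, all the $|\nabla_M v|^2$ and cross terms should collect — after completing the square — into something of the form $\Delta_M(\eta v)\ge (\text{nonnegative}) + \eta|A|^2 v \ge 0$ at $p$ (the drift term also vanishes at a critical point of $\eta v$ modulo the substitution, or can be absorbed since we only need $\Delta_M(\eta v)\ge 0$ plus the strong maximum principle; here it is cleanest to write the operator as $\mathcal{L}w = \Delta_M w + X\cdot\nabla_M w$ with $X$ the translator drift and observe $\mathcal{L}(\eta v)\ge 0$). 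Since $\eta v$ is a subsolution of a second-order elliptic operator with no zeroth-order term and attains an interior maximum, the strong maximum principle forces $\eta v$ to be locally constant; then the inequality degenerates to $\eta|A|^2 v \equiv 0$, and since $\eta>0$ and $v>0$ this gives $A\equiv 0$ near $p$, so $M$ is (a piece of) a hyperplane — and $v$ is constant on a hyperplane, forcing $\eta$ constant, contradicting $|\nabla_M\eta v|=0$ with $\eta$ affine and nonconstant unless $a=0$, in which case $\eta\equiv c$ and the statement is about $v$ alone, but $v$ has no interior max unless $M$ is a vertical plane (excluded since $M$ is a graph with $v$ finite... ) — so in every case we reach a contradiction, completing the proof.

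The main obstacle I expect is bookkeeping the first-order ``drift'' terms: the Jacobi equation for $\langle\ee_{n+1},\nu\rangle$ on a translator carries a $\nabla_M$-linear correction (coming from $\overrightarrow H=-\ee_{n+1}^\perp$ differentiated), and one must check that after the substitution $\nabla_M v = -(v/\eta)\nabla_M\eta$ at the critical point these terms do not have the wrong sign. The clean way around this is to phrase everything as: $\mathcal{L}$ is the drift-Laplacian for which $M$ is ``minimal'' (equivalently, the Laplace--Beltrami operator of the weighted/Ilmanen structure), for which $v^{-1}$ is a true positive Jacobi function $\mathcal{L}(v^{-1}) + |A|^2 v^{-1}=0$ with \emph{no} first-order term, and for which every vertically-invariant affine $\eta$ satisfies $\mathcal{L}\eta = 0$; then the reciprocal and product computations go through verbatim as in the Euclidean stable-minimal-surface case, and the only real content is the algebraic identity that $\mathcal{L}(\eta v)\ge \eta|A|^2 v$ at a critical point of $\eta v$. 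Everything else is the strong maximum principle.
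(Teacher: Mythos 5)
Your strategy is essentially the paper's: pass to the drift Laplacian $\widetilde{\Delta} f=\Delta_M f-\left<\ee_{n+1},\nabla_M f\right>$, for which $\widetilde{\Delta}\eta=0$ and $\widetilde{\Delta} v=|A|^2v+2|\nabla_M v|^2/v$, use the product rule, and exploit the cancellation at a critical point of $\eta v$. But two steps need repair. First, your Step 2 identity $\Delta_M\eta=\overrightarrow{H}\cdot a=-\ee_{n+1}^\perp\cdot a=0$ ``because $a$ is horizontal'' is false: $\ee_{n+1}^\perp=\left<\ee_{n+1},\nu\right>\nu$ is \emph{normal} to $M$, not vertical, so $\ee_{n+1}^\perp\cdot a=\left<\ee_{n+1},\nu\right>\left<\nu,a\right>$, which is generically nonzero for horizontal $a$. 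The correct statement (and the reason the paper introduces the drift Laplacian at all) is only $\widetilde{\Delta}\eta=0$, i.e. $\Delta_M\eta=\left<\ee_{n+1},\nabla_M\eta\right>$. Your closing paragraph does switch to this correct drift formulation, so the slip is recoverable, but the stated justification is wrong.

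Second, the endgame has a genuine gap. The inequality $\widetilde{\Delta}(\eta v)\ge\eta|A|^2v$ is obtained \emph{only at the critical point} $p$, because the cancellation of $2\eta|\nabla_M v|^2/v$ against $2\left<\nabla_M\eta,\nabla_M v\right>$ uses $\nabla_M(\eta v)(p)=0$; away from $p$ the cross term is uncontrolled, so you have no differential inequality on a neighborhood, and the strong maximum principle cannot be invoked to make $\eta v$ locally constant. Moreover, a non-strict ``$\ge 0$'' at a single point is perfectly consistent with a local maximum, so by itself it yields no contradiction, and the subsequent $A\equiv 0$ / hyperplane case analysis does not rescue this. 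The correct and simpler finish, which is what the paper does: at a critical point the drift term vanishes, so $\Delta_M(\eta v)(p)=\widetilde{\Delta}(\eta v)(p)=\eta|A|^2v$, and this is \emph{strictly} positive because on a translating graph $\overrightarrow{H}=-\ee_{n+1}^\perp\ne 0$, hence $|A|^2\ge H^2/n>0$ everywhere; a strictly positive Laplacian at $p$ contradicts the second-derivative test at an interior local maximum. With that replacement your argument coincides with the paper's proof.
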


Note that if $M$ is the graph of $u$, then $v=\sqrt{1+|Du|^2}$.

\begin{proof}
\newcommand{\drift}{\widetilde{\Delta}}
From the translator equation~\eqref{general-translator-equation}, we see that for $i=1,\dots,n$,
\[
 \Delta x_i = \left< H,\ee_i\right> 
 = \left<(-\ee_{n+1})^\perp, \ee_i\right> 
 = \left< (\ee_{n+1})^T, \ee_i\right> 
 = \left< \ee_{n+1}, \nabla x_i \right>
\]
where $\Delta$ is the Laplacian on $M$ and $\nabla$ is the gradient on $M$.
Thus
\[
  \drift x_i = 0 \quad (i=1,2,\dots,n),
\]
where $\drift$ is the operator on $M$ (sometimes called the Drift Laplacian) 
given by
\[
   \drift f = \Delta f - \left<\ee_{n+1}, \nabla f\right>.
\]
Consequently,
\begin{equation}\label{drift-eta}
  \drift \eta =0.
\end{equation}
According to Martin, Savas-Halilaj and Smoczyk~\cite{mss},
\begin{equation}\label{drift-v}
  \drift v = |A|^2v + 2\frac{|\nabla v|^2}{v}.
\end{equation}
One easily calculates (just as for the Laplacian) that
\begin{equation*}\label{product-rule}
\drift (\eta v) 
=
( \drift \eta) v + \eta (\drift v) + 2\left< \nabla \eta, \nabla v \right>.
\end{equation*}
(This product rule holds for any two smooth functions on $M$.)
Thus by~\eqref{drift-eta} and~\eqref{drift-v},
\begin{equation}\label{drift-eta-v}
\drift (\eta v) 
=
\eta v |A|^2 + 2\eta\frac{|\nabla v|^2}v +  2\left< \nabla \eta, \nabla v \right>.
\end{equation}
At a critical point (of any function), the Laplacian and the Drift Laplacian are equal.
At a critical point of $\eta v$, we have $0=\nabla \eta v=v\nabla \eta +\eta\nabla  v$, 
or $\nabla \eta=-\frac{\eta}{v}\nabla v$. Substitution into the last term in~\eqref{drift-eta-v}
gives
\[
   \Delta (\eta v) = \drift (\eta v) = \eta|A|^2v >0,
\]
so the critical point is not a local maximum.
\end{proof}

%%%%%%%%%%

%%% gradient estimate %%%%%%%%%%%

%%%%%%%%%%
%%%%%%%%%%

\section{Rectangular Boundaries}\label{rectangles-section}

Let 
\[
u=u_{L,b}: [-L,L]\times[-b,b]\to\RR
\]
 be the translator with boundary values $0$.
 
(Existence can be proved in many
 ways.  For example, we can use the continuity method
 starting with $\lambda=0$ to find, for each $\lambda \in [0,1]$, 
 a graph that is minimal with respect to $e^{-\lambda z}\delta_{ij}$.
 The functions are bounded below by $0$, and can be bounded 
 above by a vertical translate of the bowl soliton.  Since $u_{L,b}$ and its vertical translates
 form a $g$-minimal foliation of $[-L,L]\times[-b,b]\times\RR$, we get
 uniqueness by the maximum principle.)

From the translator equation~\eqref{translator-equation}, 
we see that $u_{L,b}$ has no interior local minima, so $u_{L,b}>0$
on the interior of the rectangle.
The function $u_{L,b}$ is smooth on $[-L,L]\times[-b,b]$ except at the corners.
It cannot be $C^2$ at the corners because the translator equation is not satisfied
there since 
\[
 \pdf{u}x=\pdf{u}y= \frac{\partial^2u}{\partial x^2} = \frac{\partial^2u}{\partial y^2} =0
\]
at the corners.   Nevertheless, $u_{L,b}$
is $C^1$ on $[-L,L]\times[-b,b]$ by the following proposition.  (In fact, it is $C^{1,\alpha}$ for
every $\alpha<1$, but we do not need that fact.)

\begin{proposition}\label{rectangles-proposition}
Let $u=u_{L,b}: [-L,L]\times[-b,b]\to\RR$ be the translator with boundary values $0$.
\begin{enumerate}[\upshape (1)]
\item\label{C^1-item} $u$ is $C^1$ everywhere and is smooth except at the four corners.
\item\label{symmetries-item} $u(x,y)\equiv u(-x,y)\equiv u(x,-y)$.
\item\label{x-bigraph-item} $\partial u/\partial x$ and $x$ have opposite signs at interior points where $x\ne 0$.
\item\label{y-bigraph-item} $\partial u/\partial y$ and $y$ have opposite signs at interior points where $y\ne 0$.
%\item\label{concave-spines-item} $u_{yy}(0,y)<0$ for all $y\in (-L,L)$. ***REMOVE THIS ITEM***
\item\label{edge-slope-item} $|Du(L,y)|$ (which is equal to $|Du(-L,y)|$) is a decreasing function of $|y|$.
 % end blue
\end{enumerate}
\end{proposition}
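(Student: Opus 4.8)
The plan is to establish the five assertions in the order (2), (1), then (3)--(4), then (5), since each relies on the earlier ones.

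First, (2) is purely a consequence of uniqueness. The rectangle $[-L,L]\times[-b,b]$ and the boundary data $0$ are both invariant under the reflections $(x,y)\mapsto(-x,y)$ and $(x,y)\mapsto(x,-y)$, and these are Euclidean isometries of $\RR^3$ that act trivially on the last coordinate, hence fix $\ee_3$ and carry translators to translators. Thus $u(-x,y)$ and $u(x,-y)$ are again translators over the rectangle with boundary values $0$, so by the uniqueness noted just before the Proposition (the $g$-minimal foliation / maximum principle argument) each of them coincides with $u$.

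Next, (1). Interior smoothness is standard quasilinear elliptic regularity for~\eqref{translator-equation} (equivalently: the graph is a stable $g$-minimal surface with local area bounds, so the interior curvature estimates of minimal surface theory apply and bound $|Du|$ and all higher derivatives). Smoothness up to the open edges follows the same way from the boundary version of those estimates invoked in the Preliminaries: each edge is a smooth segment with constant boundary data, obvious barriers rule out boundary branch points, and one gets smooth convergence/regularity up to each open edge. The one genuinely delicate point --- and the main obstacle --- is $C^1$ regularity at the four corners, with $Du=0$ there. I would obtain it as follows: each corner has interior angle $\pi/2<\pi$, so by the regularity theory for elliptic equations on domains with convex corners and smooth (here constant) boundary data --- where the relevant corner exponent $\pi/(\pi/2)=2$ exceeds $1$ --- the solution lies in $W^{2,p}$ near each corner for every $p<\infty$, hence in $C^{1,\gamma}$ for every $\gamma<1$; and since $u$ vanishes identically on both edges meeting at the corner, its gradient at the corner must vanish. (Alternatively, one can argue geometrically: a blow-up of the $g$-minimizing graph at a corner converges to an area-minimizing cone whose boundary is the two perpendicular horizontal half-lines issuing from the corner in $\{z=0\}$; the form $dx\wedge dy$ calibrates the flat quarter-plane $\{z=0\}$, so that quarter-plane is the unique such minimizer, which gives $u(x,y)=o(\dist((x,y),\text{corner}))$, i.e.\ differentiability at the corner with zero derivative.)

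Granting (1), parts (3) and (4) follow by differentiation plus the maximum principle; by (2) it is enough to prove (3) on the half-rectangle $R^+=[0,L]\times[-b,b]$. Differentiating~\eqref{translator-equation} in $x$, the function $\phi=\partial u/\partial x$ solves a linear, uniformly elliptic equation $a^{ij}D_{ij}\phi+b^{j}D_{j}\phi=0$ with $a^{ij}=(1+|Du|^2)\delta_{ij}-D_iu\,D_ju$, and --- crucially --- with no zeroth-order term, since~\eqref{translator-equation} does not involve $u$ itself. On $\partial R^+$ we have: $\phi\equiv0$ on $\{x=0\}$ (by (2), $u$ is even in $x$); $\phi\equiv0$ on $\{y=\pm b\}$ (there $u\equiv0$); $\phi=0$ at the corners $(L,\pm b)$ (by (1)); and $\phi\le0$ on $\{x=L\}$ (since $u>0$ in the interior and $u=0$ on that edge). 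Hence $\phi\le0$ on $\partial R^+$, so by the maximum principle $\phi\le0$ on $R^+$. Since $\phi\not\equiv0$ --- the subsolution $c(L-x)$ touches $u$ from below along $\{x=L\}$, so $\partial u/\partial x<0$ on that open edge (Hopf) --- the strong maximum principle forces $\phi<0$ throughout the open half-rectangle, i.e.\ $\partial u/\partial x<0$ for $0<x<L$; by (2), $\partial u/\partial x>0$ for $-L<x<0$. That is (3), and (4) is the same with $x$ and $y$ interchanged. Finally, for (5): on $\{x=L\}$ we have $u\equiv0$, so $\partial u/\partial y\equiv0$ there, and $\partial u/\partial x(L,y)\le0$ by (3), whence $|Du(L,y)|=-\partial u/\partial x(L,y)$. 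By (2) this is even in $y$, so it suffices to show it is nonincreasing on $[0,b)$, i.e.\ $\partial^2u/\partial y\,\partial x(L,y)\ge0$. Put $v=\partial u/\partial y$: by (1), $v$ is smooth up to the open edge $\{x=L\}$ with $v(L,\cdot)\equiv0$, while by (4), $v(x,y)<0$ at interior points with $y>0$, in particular for $x$ slightly less than $L$ and $0<y<b$. So, for such $y$, the function $x\mapsto v(x,y)$ is negative for $x<L$ and vanishes at $x=L$; hence its left derivative at $x=L$ is $\ge0$, i.e.\ $\partial_x v(L,y)=\partial^2u/\partial x\,\partial y(L,y)\ge0$, which is exactly the required monotonicity. (Applying Hopf's lemma to $v$ upgrades this to strict monotonicity, but the stated conclusion needs only the nonstrict inequality.)
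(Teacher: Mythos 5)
Your conclusions are all correct, but your route differs from the paper's in parts (1)--(4), so let me compare. For (2)--(4) the paper simply invokes Alexandrov moving planes; you instead get (2) from uniqueness plus the reflection invariance of the domain and boundary data (legitimate, since uniqueness was established just before the proposition via the $g$-minimal foliation), and (3)--(4) by differentiating~\eqref{translator-equation} and applying the maximum principle to $\phi=\partial u/\partial x$, which satisfies a linear elliptic equation with no zeroth-order term. This is a fine substitute --- it is essentially what the moving-plane argument reduces to here --- and part (5) is the paper's argument verbatim. One caution for your (3)--(4): the drift coefficients of the linearized equation involve $D^2u$ and blow up at the corners, so the weak maximum principle should be applied on the rectangle with small corner neighborhoods removed, using continuity of $\phi$ and $\phi=0$ at the corners (from part (1)) to pass to the limit.

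Two specific weak points. First, in (1) your primary argument for $C^1$ at the corners --- linear corner-regularity theory giving $W^{2,p}$ for every $p<\infty$ --- is not justified as stated: the coefficients of the equation depend on $Du$, whose continuity (indeed boundedness) near the corner is exactly what is at stake, so invoking the linear corner theory is circular without an a priori gradient bound there; recall also the paper's remark that $u$ is \emph{not} $C^2$ at the corners, which shows how delicately any such regularity claim must be calibrated. Your parenthetical geometric alternative is in fact the paper's own proof and should be promoted to the main argument --- but run it for an arbitrary sequence $p_i$ on the graph converging to the corner $q$: rescaling by $|p_i-q|^{-1}$ and using smooth subsequential convergence (away from the origin) to the horizontal quarter-plane gives $Du(p_i)\to 0$, i.e.\ continuity of $Du$ at the corner. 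The weaker statement $u=o(\dist(\cdot,\text{corner}))$ that you extract only gives differentiability at the corner point, not $C^1$. Second, the aside that ``the subsolution $c(L-x)$ touches $u$ from below along $\{x=L\}$'' is not arranged as stated (one cannot guarantee $c(L-x)\le u$ on a full one-sided neighborhood of that edge near $y=\pm b$); but you do not need it: $\phi\not\equiv 0$ is immediate because $u(0,0)>0=u(L,0)$, after which the strong maximum principle gives $\phi<0$ in the open half-rectangle, or one can apply Hopf's lemma directly to $u$, which satisfies $a^{ij}D_{ij}u=-(1+|Du|^2)<0$ with $u>0$ inside and $u=0$ on the edge.
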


\begin{proof}
Smoothness away from the corners is standard.  To prove that $u$ is $C^1$, let $M$
be the graph of $u$.  Let $p_i\in M$ converge to the corner $q=(-L,-b,0)$.  Translate
$M$ by $(L,b,0)$ and dilate by $|p_i-q|^{-1}$ to get $M_i$.
After passing to a subsequence, the $M_i$ converge smoothly (away from the origin)
to a surface $M'$ such that $M'$ is minimal with respect to the Euclidean metric, 
the boundary $\partial M'$ is the union of the $x$ and $y$ axes, and
 $M'$ lies in the region $\{(x,y,z): x\ge 0, y\ge 0, z\ge 0\}$.   Thus $M'$ is a horizontal quarter-plane.
 This proves~\eqref{C^1-item}.
 
 Properties~\eqref{symmetries-item}, \eqref{x-bigraph-item}, and \eqref{y-bigraph-item} 
 are proved by Alexandrov moving planes.
 
 Since $\pdf{u}y\equiv 0$ on $\{L\}\times (0,b]$ and since $\pdf{u}y<0$ on $(-L,L) \times (0,b]$,
 we see that
 \[
     \frac{\partial^2u}{\partial x \partial y}  \ge 0 \quad\text{on $\{L\} \times [0,b]$}.
\]
Thus
\begin{equation}\label{remixed}
   \frac{\partial^2u}{\partial y \partial x}  \ge 0 \quad\text{on $\{L\} \times [0,b]$.}
\end{equation}
On that edge, $|Du| = - \partial u/ \partial x$.  
 Thus (by~\eqref{remixed}) $|Du(L,y)|$ is a decreasing function of $y\in [0,b]$.
The corresponding statement for $y\in [-b,0]$ follows by symmetry.
This proves~\eqref{edge-slope-item}.
 % end blue
  \end{proof}

\begin{theorem}\label{long-domain-theorem}
Let $\Omega$ be a bounded convex domain in $\RR^p$.   Suppose there
is bounded translator
\[
    u: \Omega\times \RR^q \to \RR
\]
that vanishes on the boundary.   Then $u$ is unique, and therefore $u$
depends only on the first $p$ coordinates.
\end{theorem}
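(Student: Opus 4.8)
The plan is to reduce the whole statement to uniqueness, the second assertion being an easy consequence of the first. Indeed, the translator equation~\eqref{divergence-translator-equation} involves $u$ only through its derivatives, so it is invariant under translations of the domain; hence for each $a\in\RR^q$ the function $(x,y)\mapsto u(x,y+a)$ is again a bounded translator on $\Omega\times\RR^q$ that vanishes on $\partial\Omega\times\RR^q$, and by uniqueness it coincides with $u$. Letting $a$ vary shows that $u$ is independent of the last $q$ variables, i.e. depends only on the first $p$ coordinates. So it remains to prove that a bounded translator on $\Omega\times\RR^q$ vanishing on $\partial\Omega\times\RR^q$ is unique.

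Let $u_1$ and $u_2$ be two such translators. By symmetry it suffices to show $u_1\le u_2$, and for this I would use a sliding argument. Put $c:=\sup_{\Omega\times\RR^q}(u_1-u_2)$, which is finite because $u_1$ and $u_2$ are bounded, and suppose toward a contradiction that $c>0$. Then $w:=u_2+c$ is a translator, $w\ge u_1$ on all of $\Omega\times\RR^q$, and $\inf(w-u_1)=0$. If this infimum is attained at an interior point, then the graphs of $w$ and of $u_1$ are $g$-minimal hypersurfaces lying on one side of each other and touching at an interior point, so by the strong maximum principle $w\equiv u_1$ on the connected set $\Omega\times\RR^q$; but $w=c\ne0=u_1$ on $\partial\Omega\times\RR^q$, which is nonempty, a contradiction. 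The real content is to exclude the possibility that the infimum of $w-u_1$ is only approached along sequences escaping to infinity in the $\RR^q$ directions.

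To do that I would use two ingredients. The first is a uniform boundary estimate: there is a modulus $\omega$ with $\omega(0^+)=0$, depending only on $\Omega$ and on $\sup(|u_1|+|u_2|)$, such that $|u_j(x,y)|\le\omega(\dist(x,\partial\Omega))$ for all $y$ and $j=1,2$. Since $\Omega$ is bounded and convex, each $\xi\in\partial\Omega$ carries a supporting hyperplane $\{\ell_\xi=0\}$ with $\ell_\xi>0$ on $\Omega$ and $|\nabla\ell_\xi|=1$; one gets an upper bound by comparing $u_j$ on the $x$-bounded region $(\Omega\cap\{\ell_\xi<c_0\})\times\RR^q$ with a rescaled grim reaper profile $\ell\mapsto\log\cos(\ell-c_0)-\log\cos c_0$ (an exact translator in the single variable $\ell_\xi$, nonnegative on $(0,2c_0)$ and as large as one wishes at $\ell_\xi=c_0$ once $c_0$ is close enough to $\pi/2$), and a lower bound by comparing with a suitable affine function (a subsolution) vanishing on $\{\ell_\xi=0\}$; here the maximum principle is applied on a region that, although non-compact, is bounded in the $x$-directions. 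The second ingredient is interior compactness: because graphical translators are stable $g$-minimal surfaces, any sequence of translated translators $(x,y)\mapsto u_j(x,y+y_i)$ has a subsequence converging smoothly on compact subsets of $\Omega\times\RR^q$ to a bounded translator, which by the uniform boundary estimate again vanishes on $\partial\Omega\times\RR^q$. Granting these, choose $(x_i,y_i)$ with $(w-u_1)(x_i,y_i)\to0$; by the boundary estimate $\dist(x_i,\partial\Omega)$ is bounded below (otherwise $c\le2\,\omega(\dist(x_i,\partial\Omega))\to0$), so after passing to a subsequence $x_i\to x_\infty\in\Omega$. If the $y_i$ remain bounded we are in the attained case treated above. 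If $y_i\to\infty$, pass to the translated limits $u_1^\infty,u_2^\infty$: then $u_2^\infty+c\ge u_1^\infty$ with equality at the interior point $(x_\infty,0)$, so the strong maximum principle forces $u_2^\infty+c\equiv u_1^\infty$ on $\Omega\times\RR^q$, and evaluating on $\partial\Omega\times\RR^q$ gives $c=0$ — a contradiction. Hence $c\le0$, so $u_1\le u_2$, and uniqueness follows.

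I expect the main difficulty to be precisely this control as one goes to infinity in $\RR^q$: making the sliding and strong maximum principle argument rigorous on the non-compact but $x$-bounded domain $\Omega\times\RR^q$, and in particular establishing the uniform boundary estimate at the (possibly non-smooth) convex boundary $\partial\Omega$, where one has only supporting hyperplanes — so that cut-off grim reaper and affine barriers must replace a genuine translator over a half-space — and verifying that the comparison principle genuinely applies on the relevant unbounded regions.
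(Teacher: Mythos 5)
Your proof follows the same route as the paper's. The paper's argument is precisely your limiting step: choose $(x_i,y_i)$ along which $|u_1-u_2|$ tends to its supremum, translate the graphs by $(0,-y_i,0)$, extract limit translators by compactness, and contradict the strong maximum principle using the zero boundary values; your sliding constant $c$ is the same device in slightly different clothing, and your reduction of the ``depends only on the first $p$ coordinates'' claim to uniqueness via $\RR^q$-translation invariance is exactly the intended one.

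The one place where you go beyond the paper is the uniform boundary modulus via grim-reaper and affine barriers, and that is also where your write-up has its only real gap, which you flag yourself: the comparison with the barrier on $(\Omega\cap\{\ell_\xi<c_0\})\times\RR^q$ is asserted on an unbounded region without justifying that the comparison principle applies there (and such principles can genuinely fail on unbounded domains absent some control at infinity). The gap is fillable, but the paper sidesteps the barrier lemma entirely by citing Theorem~\ref{upper-halfspace-compactness}: since the graphs of $u_1,u_2$ lie in the bounded slab $|z|\le \sup(|u_1|+|u_2|)$, a subsequential limit of the translated graphs cannot contain a vertical wall over a component of $\partial\Omega$, so (by the Case 1 analysis in that theorem) the limit has no points over $\partial\Omega\times\RR^q$ at nonzero height; this simultaneously prevents your near-extremal points $x_i$ from drifting to $\partial\Omega$ while $u_1-u_2$ stays near $c>0$ and shows that the limit translators vanish on the boundary, after which your strong-maximum-principle contradiction closes the proof. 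If you prefer to keep the barrier estimate, you can prove the needed comparison by one more application of your own translate-and-take-limits device to $\sup(u_j-\Gamma)$, where $\Gamma$ is your grim-reaper barrier (an exact translator independent of the $\RR^q$-variables, with $\Gamma\ge\sup u_j$ on the face $\{\ell_\xi=c_0\}$); but the possibility of the near-extremal points drifting to $\partial\Omega$ reappears there, so in practice you end up relying on the boundary analysis of the compactness theorem anyway.
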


\begin{proof}  
Suppose that $v\ne u$ is another bounded solution.
Choose $(x_i,y_i)\in \Omega\times\RR^q$
such that 
\[
    |u(x_i,y_i)-v(x_i,y_i)| \to \sup |u-v|.
\]
After passing to a subsequence, if we translate the graphs by $(0,-y_i,0)$,
we get convergence (see Theorem~\ref{upper-halfspace-compactness})
 to translaters $\tilde u$ and $\tilde v$ that violate the strong
maximum principle.
\end{proof}

\begin{corollary}\label{height-bound-corollary} 
If
\[
   \liminf_{L\to\infty}u_{L,b}(0,0) < \infty,
\]
then $b<\pi/2$.
\end{corollary}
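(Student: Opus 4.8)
The plan is a two-stage limiting argument. Assume $\liminf_{L\to\infty}u_{L,b}(0,0)<\infty$; I will deduce $b<\pi/2$.

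\emph{Stage 1 (let $L\to\infty$).} First I would note that $L\mapsto u_{L,b}(0,0)$ is nondecreasing: for $L'>L$ the translators $u_{L',b}$ and $u_{L,b}$ satisfy $u_{L',b}\ge 0=u_{L,b}$ on $\partial([-L,L]\times[-b,b])$, so the comparison principle gives $u_{L',b}\ge u_{L,b}$ on the smaller rectangle. Hence $u_{L,b}(0,0)$ converges to a finite limit $C$ (alternatively, just extract $L_j\to\infty$ realizing the liminf). By Proposition~\ref{rectangles-proposition}, parts~\eqref{x-bigraph-item} and~\eqref{y-bigraph-item}, each $u_{L,b}$ is nonincreasing in $|x|$ and in $|y|$, so $0\le u_{L,b}(x,y)\le u_{L,b}(0,0)\le C$ throughout the rectangle. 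With this uniform height bound, the $C^1$ boundary regularity of Proposition~\ref{rectangles-proposition}\eqref{C^1-item}, and the stability and area estimates for $g$-minimal surfaces together with the compactness theory recalled earlier, the graphs of $u_{L_j,b}$ converge smoothly on compact subsets of $\RR\times(-b,b)$ to the graph of a translator $u_\infty:\RR\times(-b,b)\to\RR$ with $u_\infty\ge 0$, $u_\infty(0,0)=C$, and $u_\infty$ still nonincreasing in $|x|$ and $|y|$; in particular $0\le u_\infty\le C$ on the whole strip.

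\emph{Stage 2 (let $x\to\infty$).} For fixed $y\in(-b,b)$ the map $x\mapsto u_\infty(x,y)$ is nonincreasing and bounded below, so $w(y):=\lim_{x\to+\infty}u_\infty(x,y)$ exists with $0\le w\le C$. Applying the same compactness result to the translators $(x,y)\mapsto u_\infty(x+\zeta_j,y)$ for $\zeta_j\to\infty$ (all bounded by $C$), a subsequence converges smoothly on compact subsets of $\RR\times(-b,b)$ to a translator which, by the monotonicity in $x$, must equal $w(y)$ and be independent of $x$. Thus $w$ is a smooth solution on $(-b,b)$ of the ODE obtained by putting $u(x,y)=w(y)$ into~\eqref{translator-equation}, namely $w''+(w')^2+1=0$, and so $w(y)=\log\cos(y-c)+d$ for constants $c,d$.

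\emph{Conclusion.} Since $w\ge 0$ on $(-b,b)$, we get $\cos(y-c)\ge e^{-d}>0$ there, hence by continuity $\cos(y-c)>0$ on the closed interval $[-b,b]$. The set $\{y:\cos(y-c)>0\}$ is a disjoint union of open intervals of length $\pi$, so $[-b,b]$, being a closed interval of length $2b$ contained in one of them, satisfies $2b<\pi$, i.e.\ $b<\pi/2$. The steps I expect to require the most care are the two applications of the compactness theory for translating graphs-with-boundary: one must check that the convergence is smooth on compact subsets of the \emph{open} strip, so that $u_\infty$ (and then $w$) is defined on all of $(-b,b)$ rather than a smaller interval, and one must verify that the uniform height bound holds on the entire strip and not merely near its center --- which is precisely where Proposition~\ref{rectangles-proposition}, parts~\eqref{x-bigraph-item} and~\eqref{y-bigraph-item}, enters. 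Once the two limits are in hand, the remainder is a short ODE computation.
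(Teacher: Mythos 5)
Your proof is correct, and its second half takes a genuinely different route from the paper. Stage~1 coincides with the paper's first step: extract $L_j\to\infty$ with $u_{L_j,b}(0,0)$ bounded and pass to a smooth limit translator on the strip (your monotonicity-in-$L$ comparison argument is fine but not needed, as you note, since a subsequence realizing the liminf suffices); the uniform height bound $0\le u_{L_j,b}\le C$ coming from Proposition~\ref{rectangles-proposition}\eqref{x-bigraph-item}--\eqref{y-bigraph-item} is exactly what rules out the vertical-cylinder alternative in the compactness theorem (Theorem~\ref{compactness-theorem}), so the limit really is a graph over the whole strip with smooth interior convergence -- the point you rightly flag as needing care. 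Where you diverge is the second step: the paper applies Theorem~\ref{long-domain-theorem} (uniqueness of bounded translators over $\Omega\times\RR^q$ with zero boundary values, proved by a translation-plus-strong-maximum-principle argument) to conclude that the limit is independent of $x$, hence a portion of the grim reaper surface vanishing on the boundary of a strip of width $2b$, which forces $b<\pi/2$; you instead exploit the monotonicity in $|x|$ from Proposition~\ref{rectangles-proposition}\eqref{x-bigraph-item} to take a second limit $x\to\infty$, identify the blow-down with the pointwise monotone limit $w(y)$, and read off $b<\pi/2$ from the explicit ODE $w''+(w')^2+1=0$ and $w\ge 0$. The paper's route is shorter here because Theorem~\ref{long-domain-theorem} is already established (and reused in the higher-dimensional sections) and it never needs the interior Alexandrov monotonicity; your route is more self-contained and avoids both the uniqueness theorem and any discussion of boundary values or boundary convergence, at the cost of a second compactness application, where again the height bound (not a gradient bound) is what prevents the domain from shrinking. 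Both arguments are sound.
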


\begin{proof} Suppose that the liminf is finite.    Then there is a sequence $L(i)\to \infty$ 
such that $u_{L(i),b}$ converges smoothly to a translator
\[
   u: [-b,b]\times\RR\to\RR
\]
with $u=0$ on the boundary of the strip and $u(0,0)=\max u < \infty$.

By Theorem~\ref{long-domain-theorem}, $u(x,y)$ is independent of $x$ and thus 
its graph is a portion of the grim reaper surface. 
Consequently $b<\pi/2$.
\end{proof}

In the next section, we produce a $\Delta$-wing on $\RR \times (-b,b)$ by taking a subsequential
limit of $u_{L,b}-u_{L,b}(0,0)$ as $L\to\infty$. 
Standard curvature bounds give smooth subsequential convergence, but
conceivably the domain of the limit function might be a thinner strip inside $\RR \times (-b,b)$.
The following gradient bound guarantees that such thinning does not happen.

 \begin{proposition}\label{rectangle-gradient-bound}
 Let $B\ge \pi/2$.  There is a constant $C(B)<\infty$ such that
 \[
     (b- |y|) \sqrt{ 1 + |Du_{L,b}(x,y)|^2} \le C(B)
\]
for all $b \le B$, $L\ge 1$, and $(x,y)\in [-L,L] \times [-b,b]$.
\end{proposition}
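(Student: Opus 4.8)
The plan is to argue by contradiction, extracting a limit of suitably translated and rescaled translators and showing it violates Theorem~\ref{spruck-xiao-gradient-bound}. Suppose the bound fails: then there exist $B\ge \pi/2$, a sequence of parameters $b_j\le B$, $L_j\ge 1$, and interior points $(x_j,y_j)\in(-L_j,L_j)\times(-b_j,b_j)$ such that, writing $M_j$ for the graph of $u_{L_j,b_j}$ and $v_j=\sqrt{1+|Du_{L_j,b_j}|^2}$, we have $(b_j-|y_j|)\,v_j(x_j,y_j)\to\infty$. By the symmetry in Proposition~\ref{rectangles-proposition}\eqref{symmetries-item} we may assume $y_j\ge 0$. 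Since $v_j$ stays bounded on any fixed compact subset of the open rectangle (by interior gradient estimates for translators, which follow from the stable-minimal-surface compactness discussed in the Preliminaries), blow-up must occur near the boundary $y=b_j$: after passing to a subsequence either $b_j-y_j\to 0$, or else $b_j\to\infty$ with $y_j$ in a bounded set. In the latter case we translate horizontally by $-x_j$ and use Corollary~\ref{height-bound-corollary}-type reasoning together with the compactness theorems to pass to a limit translator over a strip or halfspace; but along that limit the quantity $(b-|y|)v$ is finite everywhere away from the corners, contradicting the divergence. So the essential case is $d_j:=b_j-|y_j|\to 0$, i.e. the blow-up point approaches the top edge.

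The main step, then, is the edge case. The idea is to rescale by $1/d_j$, centering at the point $(x_j,b_j,u_{L_j,b_j}(x_j,b_j))$ on the edge. Set
\[
   \widetilde M_j \;=\; \frac{1}{d_j}\Bigl(M_j-(x_j,b_j,u_{L_j,b_j}(x_j,b_j))\Bigr).
\]
Because $d_j\to 0$, the Ilmanen metric $e^{-z}\delta_{ij}$ converges locally uniformly to the Euclidean metric under this rescaling, so $\widetilde M_j$ converges (subsequentially, smoothly on compact sets, including up to the boundary — here we invoke the boundary-regularity remark from the Preliminaries, noting that the rescaled boundary is a line and obvious planar barriers rule out boundary branch points) to a smooth minimal surface $M_\infty$ in Euclidean $\RR^3$ with a straight boundary line contained in the plane $\{y=0\}$ and with $M_\infty\subset\{y\le 0\}$. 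Moreover the graphs $M_j$ lie between the plane $z=0$ and a vertical translate of the bowl soliton (as noted in the construction of $u_{L,b}$), and these bounds rescale to force $M_\infty$ to be a minimal graph of bounded gradient — in fact $M_\infty$ is a halfplane, namely $\{y\le 0\}\times\{z=\text{const}\}$ or a tilted halfplane through the boundary line. On such a limit, $v_\infty\equiv\sqrt{1+|\text{const}|^2}$ is \emph{bounded}. On the other hand, the rescaled points $\widetilde p_j$ corresponding to $(x_j,y_j)$ lie at Euclidean distance $1$ from the boundary line (since $|y_j-b_j|=d_j$), hence converge to an interior point $\widetilde p_\infty\in M_\infty$, and by smooth convergence $v_\infty(\widetilde p_\infty)=\lim v_j(x_j,y_j)$. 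But the divergence hypothesis says $d_j\,v_j(x_j,y_j)\to\infty$ with $d_j\to 0$, which is consistent with $v_j(x_j,y_j)\to\infty$; so in fact $v_\infty(\widetilde p_\infty)=\infty$, contradicting the boundedness of $v_\infty$.

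I expect the delicate point to be pinning down that the blow-up limit $M_\infty$ is flat with controlled gradient, rather than, say, a catenoid-type piece or a higher-multiplicity object. Two ingredients handle this. First, the one-sidedness ($M_\infty\subset\{y\le 0\}$) together with the boundary being a full straight line forces, by the strong maximum principle comparing with the halfplane $\{y=0\}$, that the boundary line's tangent plane agrees with a plane containing that line — and then the classical fact (analogous to the Cauchy–Kowalevski / unique-continuation argument used in Theorem~\ref{convexity-theorem}) that a minimal surface tangent to a plane along a line is that plane gives flatness. Second, multiplicity-one and the absence of boundary branch points come from the local $g$-area bound (at most half the area of the boundary of any compact region, as recalled in the Preliminaries), which survives the rescaling because $d_j\to0$ makes the metric nearly Euclidean. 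Once $M_\infty$ is a (possibly tilted) halfplane, its gradient is a finite constant, contradicting $v_\infty(\widetilde p_\infty)=\infty$, and the proposition follows. The remaining, easier case $b_j\to\infty$ is dispatched as sketched above: the limit is a complete translator over a strip $\RR\times(-\infty,c)$ or a halfspace, on which Proposition~\ref{rectangles-proposition} and the known classification preclude $(b-|y|)v$ blowing up at an interior point.
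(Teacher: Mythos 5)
Your reduction to the case $d_j:=b_j-|y_j|\to 0$ is where the argument breaks down, and it hides the real content of the proposition. First, the dichotomy is off: since $b_j\le B$ by hypothesis, the alternative ``$b_j\to\infty$'' cannot occur; the genuine remaining case is $b_j-|y_j|\ge\epsilon>0$ along a subsequence with $v_j(x_j,y_j)\to\infty$. You dismiss that case by appealing to ``interior gradient estimates for translators, uniform in $L$,'' but no such estimate is available off the shelf: the heights $u_{L,b}(0,0)\to\infty$ as $L\to\infty$ (Corollary~\ref{height-bound-corollary}), so oscillation-dependent gradient estimates do not apply uniformly, and stable-minimal-surface compactness only gives subsequential convergence to \emph{some} translator, which could perfectly well be a vertical plane --- i.e.\ exactly the gradient blow-up / ``thinning'' phenomenon this proposition is designed to exclude. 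Moreover the blow-up point can sit near the lateral edges $x=\pm L_j$, where no interior estimate is relevant at all. So the main case is assumed rather than proved. Second, the rescaling at the top edge is also flawed: after centering at $(x_j,b_j,0)$ and dilating by $1/d_j$, the rescaled point $\widetilde p_j$ is at horizontal distance $1$ from the boundary line, but its height is $u_{L_j,b_j}(x_j,y_j)/d_j$, which need not stay bounded (with $d_jv_j\to\infty$ one expects it to diverge); hence $\widetilde p_j$ need not converge to any point of $M_\infty$, and the conclusion ``$v_\infty(\widetilde p_\infty)=\infty$'' is not meaningful on a smooth limit surface, so no contradiction is actually reached.

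The paper's proof is quite different and avoids both problems. It applies Theorem~\ref{spruck-xiao-gradient-bound} directly to $w=(b-y)\sqrt{1+|Du_{L,b}|^2}$ on $[-L,L]\times[0,b]$ to force the maximum of $w$ onto the boundary of the rectangle; $w$ vanishes on the top edge, and the edge-slope monotonicity of Proposition~\ref{rectangles-proposition}\eqref{edge-slope-item} excludes the lateral edges, so the maximum lies on the midline $y=0$. If that maximum were unbounded along a sequence $L(i)\to\infty$, one translates (no rescaling) by the corresponding points $p_i$ on the midline, where by symmetry $\partial u/\partial y=0$ and $|Du|\to\infty$ forces the tangent planes to become vertical; the limit translator then has a vertical tangent plane near $0$ (interior or boundary maximum principle, depending on whether $\dist(0,\partial M_i')$ degenerates), hence by unique continuation contains an entire vertical plane, contradicting containment in the slab $\RR\times[-b,b]\times\RR$ with $b\le B$. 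If you want to salvage your approach, you would need to supply precisely this kind of argument for the case $b_j-|y_j|\ge\epsilon$, at which point you have essentially reproduced the paper's proof; the Spruck--Xiao maximum principle for $\eta v$, which you mention but never actually use, is the tool that makes the reduction to the midline legitimate.
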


\begin{proof}
By the symmetry, it suffices to prove the bound for $y\ge 0$.
Let 
\begin{align*}
&w=w_{L,b}: [-L,L] \times [0,b] \to \RR,  \\
&w(x,y) = (b-y)\sqrt{1 + |Du_{b,L}(x,y)|^2}.
\end{align*}
By Theorem~\ref{spruck-xiao-gradient-bound}, the maximum of $w$ occurs at a point $(\tilde x, \tilde y)$ on one of the four edges of the rectangle $[-L,L] \times [0,b]$.
It cannot occur on the edge $[-L,L] \times \{b\}$ since the function vanishes there.

By Proposition~\ref{rectangles-proposition}\eqref{edge-slope-item}, 
it cannot occur on the edges $\{L\}\times(0,a]$ or $\{-L\}\times(0,a]$.

Thus it occurs at a point in $ [-L,L] \times \{0\}$.

Now suppose that Proposition~\ref{rectangle-gradient-bound} is false.  Then there exist sequences $L(i)\to\infty$
and $b(i)\le A$ such that 
\[
  \max_{ [-L(i),L(i)]\times[-b(i),b(i)]}w_{L(i), b(i)} \to \infty.
\]
By the forgoing discussion, the maximum is attained at a point $(x_i,0)$.
Let $M_i$ be the graph of $u_{L(i),b(i)}$ and let $p_i=(x_i,0,u_{L(i),b(i)}(x_i,0))$.

Now
\[
  |Du_{L(i),b(i)}(x_i,0)| \to\infty,
\]
and $\pdf{}y u_{L(i),b(i)}(x_i,0)\equiv 0$ (by the symmetry), so
\[
  \text{$\Tan(M_i,p_i)$ converges to the plane $x=0$}.
\]

Translate $M_i$ by $-p_i$ to get a translator $M_i'$.
By passing to a subsequence, we can assume that the $M_i'$ converge to a limit translator $M'$.
We can also assume that $\dist(0,\partial M_i')$ converges to a limit $\delta \in [0,\infty]$.

By the maximum principle (if $\delta>0$) or the boundary maximum principle (if $\delta=0$),
\[
   \Tan(M',p) = \Tan(M',0) = \{x=0\}
\]
for $p$ in a small neighborhood of $0$.  Thus there is a neighborhood $G$
of $0$ such that 
\[
  M'\cap G \subset \{x=0\}.
\]
By unique continuation, $M'$ contains the entire plane $\{x=0\}$, which is impossible
since $M'$ is contained in the slab $\RR \times [-b,b] \times \RR$.
\end{proof}

%%%%% DELTA WINGS
\section{Existence of $\Delta$-Wings}\label{existence-section}

%The aim of this section is to prove the following theorem:

\begin{theorem} \label{existence-theorem}
For every $b\in \left(\frac \pi2, +\infty \right)$ there exists a complete translator 
\[
   u^b: \RR \times (-b,b)\to\RR
\]
with  the following properties:
\begin{enumerate}[\upshape (a)]
\item $u^b(x,y)\equiv u^b(-x,y)\equiv u^b(x,-y)$.
\item The Gauss curvature of the graph is everywhere positive.
\item\label{gradient-bound-item}  For $B\ge b$, 
\[
   \sup_{(x,y)} (b-|y|)\sqrt{1+ |Du^b(x,y)|^2} \le C(B)
\]
where $C(B)<\infty$ is as in Proposition~\ref{rectangle-gradient-bound}.
%\item $\Sigma_a$ is asymptotic to the corresponding tilted grim reaper, as $y \to \pm \infty.$
\end{enumerate}
\end{theorem}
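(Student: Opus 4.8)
The plan is to produce $u^b$ as a subsequential limit, as $L\to\infty$, of the normalized rectangular translators $v_L:=u_{L,b}-u_{L,b}(0,0)$ from Section~\ref{rectangles-section}. By Proposition~\ref{rectangles-proposition}, each $v_L$ satisfies $v_L(x,y)\equiv v_L(-x,y)\equiv v_L(x,-y)$, vanishes at the origin, and is everywhere $\le 0$ with maximum attained only at $(0,0)$; so the graph $M_L$ of $v_L$ over $[-L,L]\times[-b,b]$ passes through $0$ and lies in $\{z\le 0\}$. Since $b>\pi/2$, Corollary~\ref{height-bound-corollary} forces $u_{L,b}(0,0)\to\infty$, so the part of $\partial M_L$ over $\{|y|=b\}$ sits at height $-u_{L,b}(0,0)\to-\infty$ while the part over $\{|x|=L\}$ runs off to $|x|=\infty$; thus $\partial M_L$ leaves every compact subset of $\RR^3$, and in particular the $\RR^3$-distance from $0$ to $\partial M_L$ tends to $\infty$. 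By the stability of graphical translators and the curvature and area estimates for stable minimal surfaces with respect to the Ilmanen metric, a subsequence $M_{L(i)}$ then converges smoothly on compact subsets of $\RR^3$ to a properly embedded, boundaryless translator $\widehat M$ through the origin.

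The key step is to recognize $\widehat M$ as the graph of a function $u^b$ over the \emph{whole} strip $\RR\times(-b,b)$ — this is exactly where Proposition~\ref{rectangle-gradient-bound} enters, to rule out ``thinning'' of the domain. Given a compact $K\subset\RR\times(-b,b)$, fix $\delta>0$ with $K\subset\RR\times[-b+\delta,b-\delta]$; for large $i$, Proposition~\ref{rectangle-gradient-bound} bounds $|Dv_{L(i)}|$ on $K$ by $C(b)/\delta$, and since $v_{L(i)}(0,0)=0$ the $v_{L(i)}$ are then uniformly bounded on $K$ as well. The translator equation~\eqref{translator-equation} is uniformly elliptic on $K$ once the gradient is bounded, so interior elliptic estimates together with a diagonal argument over an exhaustion of the strip yield a subsequence converging in $C^\infty_{\mathrm{loc}}(\RR\times(-b,b))$ to a translator $u^b$ whose graph lies in $\widehat M$. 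Now $\widehat M$ is not a vertical plane (it contains the genuine graph of $u^b$), so by the strong maximum principle (Theorem~\ref{vertical-plane-theorem}) the nonnegative Jacobi field $\langle \ee_3,\nu\rangle$ is strictly positive on $\widehat M$; hence $\widehat M$ is everywhere locally a graph over the $xy$-plane and, lying in the closed slab $\{|y|\le b\}$, in fact lies in the open slab. It follows that the graph of $u^b$ is open in $\widehat M$ (near each of its points the two $C^\infty_{\mathrm{loc}}$-limits agree) and closed in $\widehat M$ (a limit in $\widehat M$ of points of this graph projects into the open slab, hence into a compact subset of the strip where $u^b$ is continuous). Therefore the graph of $u^b$ is a union of connected components of $\widehat M$, so it is itself properly embedded and boundaryless in $\RR^3$, hence complete. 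Properties (a) and (c) then follow immediately by passing the corresponding statements for $v_{L(i)}$ (Propositions~\ref{rectangles-proposition} and~\ref{rectangle-gradient-bound}) to the $C^\infty_{\mathrm{loc}}$ limit.

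For (b): by the Spruck--Xiao convexity theorem (Theorem~\ref{convexity-theorem}) the complete translator given by the graph of $u^b$ has nonnegative Gauss curvature, and if it vanished anywhere the graph would be a grim reaper surface or a tilted grim reaper surface. But the grim reaper surface is a graph over a strip of width $\pi\ne 2b$; a tilted grim reaper $\grim_\theta$ with $\theta\ne 0$ is not invariant under $(x,y)\mapsto(-x,y)$, because of the term $x\tan\theta$; and $\theta=0$ again gives width $\pi$. Each alternative contradicts (a) or the domain of $u^b$, so the Gauss curvature of the graph is everywhere positive.

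I expect the main obstacle to be the second step: showing the limit is a graph over the \emph{full} strip $\RR\times(-b,b)$ rather than a thinner one, and, simultaneously, that the limit surface is complete — i.e.\ that no part of it escapes to infinity over the lines $y=\pm b$ while $u^b$ stays bounded. The uniform gradient bound of Proposition~\ref{rectangle-gradient-bound} is precisely the tool for the former, and the observation that $\partial M_L$ escapes every compact subset of $\RR^3$ (which uses $b>\pi/2$ via Corollary~\ref{height-bound-corollary}) handles the latter; the remaining points — properness of $\widehat M$ and the open/closed dichotomy inside it — are routine.
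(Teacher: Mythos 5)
Your proposal follows essentially the same route as the paper's proof: normalize $u_{L,b}$ by its value at the origin, use Proposition~\ref{rectangle-gradient-bound} to get smooth convergence over the full strip $\RR\times(-b,b)$, use Corollary~\ref{height-bound-corollary} to send the boundary off to infinity (hence completeness), pass the symmetries to the limit, and obtain positive Gauss curvature by ruling out grim reapers and tilted grim reapers via Theorem~\ref{convexity-theorem}. The only quibble is that your appeal to Theorem~\ref{vertical-plane-theorem} on all of $\widehat M$ needs componentwise care (a possible plane component at $\{y=\pm b\}$ is instead excluded because $\widehat M\subset\{z\le 0\}$); this is easily repaired, and the paper itself treats the completeness step even more briefly.
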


\begin{proof}
As in Section~\ref{rectangles-section}, let
$
   u = u_{L,b} :    [ -L, L] \times [-b,b] \longrightarrow \RR
$
be the solution to the translator equation with
boundary values $0$.

By Proposition~\ref{rectangle-gradient-bound}, $|Du_{L,b}|$ is bounded (independent of $L$) on compact subsets
of $\RR \times (-b,b)$.  Thus every sequence of $L$'s tending to infinity has a subsequence $L(i)$
such that $u_{L(i),b}-u_{L(i),b}(0,0)$ converges smoothly to a limit translator
\[
   u^b: \RR \times (-b,b) \to \RR.
\]
(Later we will show that the limit $u^b$ does not depend on the choice of the sequence $L(i)$;
see Proposition~\ref{symmetric-uniqueness}.)

Since $u_{L(i),b}(0,0)$ tends to infinity (see Corollary~\ref{height-bound-corollary}), 
the graph of $u^b$ is a complete translator.
The symmetries
\begin{equation}\label{u^a-symmetries}
  u^b(x,y)\equiv u^b(-x,y)\equiv u^b(x,-y)
\end{equation}
of $u^b$ follow from the corresponding symmetries of the $u_{L,b}$.

Since $b>\pi/2$, we see that $u^b$ is not a grim reaper surface.
The symmetries~\eqref{u^a-symmetries} imply that $u^a$ is not a tilted grim reaper surface.
Hence the Gauss curvature of the translator $u^a$ is everywhere positive
by Theorem~\ref{convexity-theorem}.
\end{proof}

\section{Uniqueness of $\Delta$-Wings}\label{symmetric-delta-wings-section}

\begin{proposition}\label{different-intervals-proposition}
Suppose that
\begin{align*}
 &u: \RR \times (a,b)\to \RR, \\
 &\tilde u:  \RR \times (\tilde a, \tilde b) \to \RR
\end{align*}
are complete, strictly convex translators. 
\begin{enumerate}[\upshape(1)]
\item\label{shifted-item} If $a<\tilde a <b< \tilde b$, then $u-\tilde u$ has no critical points.
\item\label{nested-item} If $a<\tilde a<\tilde b<b $, then $u-\tilde u$ has at most one critical point.
If there is a critical point, then $D^2u\ne D^2\tilde u$ at that point.
\end{enumerate}
\end{proposition}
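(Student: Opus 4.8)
The plan is to study the difference $w = u - \tilde u$ on the overlap strip and use the structure of the translator equation together with the behavior at the ends (governed by Theorem~\ref{spruck-xiao-strip-theorem}) to control the number of critical points. The key analytic input is that at a critical point $p$ of $w$ one has $Du(p) = D\tilde u(p)$, so the graphs of $u$ and $\tilde u$ are tangent at $p$; subtracting the two versions of the quasilinear translator equation~\eqref{translator-equation} shows that $w$ satisfies a second-order linear elliptic equation $a_{ij}D_{ij}w + b_i D_i w = 0$ with smooth coefficients (the coefficients being obtained by the usual interpolation between the coefficient functions evaluated along $u$ and $\tilde u$). Hence $w$ cannot have an interior local maximum or minimum that is a strict extremum, and more usefully, its nodal set and critical structure obey the Hartman--Wintner / Bers-type local description: near a critical point $p$ of $w$, either $w - w(p)$ is identically zero to infinite order (ruled out by unique continuation unless $u \equiv \tilde u + c$, which contradicts strict convexity with distinct domains in case~\eqref{shifted-item} and is handled separately in case~\eqref{nested-item}), or $w - w(p)$ looks like a harmonic polynomial of some degree $k \ge 2$, so the set $\{w = w(p)\}$ near $p$ consists of $2k \ge 4$ arcs emanating from $p$, and $p$ is a saddle of order $k-1 \ge 1$ for $w$.

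For part~\eqref{shifted-item}: suppose $w$ has a critical point $p$. Since the domains are $\RR\times(a,b)$ and $\RR\times(\tilde a,b)$ with $a < \tilde a < b < \tilde b$, consider the function on the common strip $\RR\times(\tilde a, b)$. As $y \downarrow \tilde a$, $\tilde u$ stays bounded (smooth up to its open strip only in the limit sense — here one must be careful) — more robustly, the idea is: by Theorem~\ref{spruck-xiao-strip-theorem} applied to $u$ and to $\tilde u$, translating in $x$ by $\zeta \to \pm\infty$ makes each of $u, \tilde u$ converge to tilted grim reapers $\grim_{\pm\theta}$, $\grim_{\pm\tilde\theta}$ with $\theta = \arccos(2b/\pi)$, $\tilde\theta = \arccos((\tilde b - \tilde a)/\pi)$; since the strips differ, $\theta \ne \tilde\theta$, and one checks directly that $\grim_{\theta} - \grim_{\tilde\theta}$ has no critical point on the overlap (its gradient is $(\tan\theta - \tan\tilde\theta, \ldots) \ne 0$ in the $x$-direction after the shift-invariance is accounted for). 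So for $|x|$ large, $Dw$ is bounded away from $0$; hence critical points of $w$ lie in a compact region, and there are finitely many, each a saddle of order $\ge 1$. Counting with multiplicity and using the degree/winding argument for the Gauss-type map $Dw/|Dw|$ along a large rectangle — on which $Dw$ winds with total turning $0$ because at the two vertical ends it points in essentially opposite constant $x$-directions and along the horizontal edges $y = \tilde a, y = b$ one needs boundary information — forces the total saddle index to be $0$, so there are no saddles, i.e.\ no critical points. The boundary behavior along $y \to \tilde a$ and $y \to b$ is where completeness is used: as $y\downarrow \tilde a$, $|D\tilde u|\to\infty$ while $|Du|$ stays finite, so $Dw$ points in a definite (downward $y$) sense; similarly as $y\uparrow b$, $|Du|\to\infty$ while $|D\tilde u|$ stays finite, so $Dw$ points the opposite way. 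This pins the winding number.

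For part~\eqref{nested-item}, the same nodal/index bookkeeping applies but now the asymptotics are different: with $\tilde a, \tilde b$ strictly inside $(a,b)$, as $y \uparrow \tilde b$ we have $|D\tilde u| \to \infty$ while $|Du|$ finite, and as $y \downarrow \tilde a$ again $|D\tilde u|\to\infty$, $|Du|$ finite, so $Dw$ points \emph{outward} on both horizontal edges of the overlap strip $\RR\times(\tilde a,\tilde b)$; combined with the grim-reaper asymptotics at $x = \pm\infty$ (where again $\theta \ne \tilde\theta$ so $Dw$ is nonzero and nearly constant), the boundary winding of $Dw/|Dw|$ around $\partial(\RR\times(\tilde a,\tilde b))$ — interpreted as a limit of large rectangles — equals $+1$, which by the Poincar\'e--Hopf / argument-principle count of saddle indices forces exactly one critical point of saddle order $1$; in particular at that point $w - w(p)$ is (to leading order) a nondegenerate quadratic, so $D^2 w(p) \ne 0$, i.e.\ $D^2 u(p) \ne D^2\tilde u(p)$. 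The case where $w$ is constant-to-infinite-order at $p$ — forcing $u = \tilde u + c$ by unique continuation — would require $\RR\times(a,b) = \RR\times(\tilde a,\tilde b)$, contradiction, so it does not arise.

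The main obstacle I expect is making the boundary winding-number argument rigorous: the overlap strip is noncompact in both the $x$ and $y$ directions, $w$ and $Dw$ blow up as $y$ approaches the thinner strip's edge, and one must truncate to large rectangles $[-R,R]\times[\tilde a + \delta, \tilde b - \delta]$, show the winding of $Dw/|Dw|$ around the boundary stabilizes as $R\to\infty$ and $\delta\to 0$ (using uniform gradient estimates like Proposition~\ref{rectangle-gradient-bound} and the convergence in Theorem~\ref{spruck-xiao-strip-theorem} to control the vertical edges, and the monotone blow-up of $|D\tilde u| - |Du|$ near the horizontal edges), and verify $Dw \ne 0$ on these truncated boundaries for $R$ large and $\delta$ small. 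Once the winding number is identified, the local Bers description of zeros of the elliptic equation for $w$ gives the conclusion by a standard index count; that part is routine.
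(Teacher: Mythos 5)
Your overall strategy is viable and is, in spirit, close to what the paper does: the paper also reduces both parts to counting critical points of $F=u-\tilde u$ on a truncated domain using its behavior at the ends (Theorem~\ref{spruck-xiao-strip-theorem}), quoting Rado's theorem (Theorem~\ref{rado-theorem}) for part~\eqref{shifted-item} and a Morse-theoretic boundary count from \cite{scherkon} for part~\eqref{nested-item}, whereas you replace those citations by a direct Bers/Hartman--Wintner plus Poincar\'e--Hopf index computation. The linearization of the difference, the "all critical points are saddles of index $\le -1$" structure, and the conclusion $D^2u\ne D^2\tilde u$ at a simple saddle are all fine.

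However, the step you yourself defer -- pinning down the boundary winding -- is exactly where your stated computations are wrong, so the argument as written does not close. First, in case~\eqref{shifted-item} your nondegeneracy of $Dw$ at the ends rests on ``the strips differ, so $\theta\ne\tilde\theta$''; this is false when the strips have equal width and are merely shifted ($b-a=\tilde b-\tilde a$), which is precisely the situation in which the proposition is applied in Corollary~\ref{same-x-corollary}. There the $x$-components of the limiting gradients cancel, and the nonvanishing must instead come from the $y$-components (the two grim reaper profiles are centered on different strips, and the profile derivative is strictly monotone in $y$), or one can sidestep the issue entirely as the paper does via Rado's theorem, since each level set of $w$ has at most two ends in case~\eqref{shifted-item}. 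Second, your direction bookkeeping is off: in case~\eqref{shifted-item}, as $y\downarrow\tilde a$ one has $\partial\tilde u/\partial y\to+\infty$ and as $y\uparrow b$ one has $\partial u/\partial y\to-\infty$, so $\partial w/\partial y\to-\infty$ at \emph{both} horizontal edges -- $Dw$ points the same way, not ``the opposite way'' -- so the mechanism you describe for forcing winding $0$ is not the right one (the conclusion, winding $0$, is still what one should get, but it must be argued from the correct boundary data). Third, in case~\eqref{nested-item} a winding of $+1$ is incompatible with your own local analysis, since every interior critical point is a saddle of index $-k\le-1$; with the standard orientation the boundary degree is $-1$, and it is that value which forces exactly one simple saddle. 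These are repairable (and the uniformity issues you flag are indeed handleable with Theorem~\ref{spruck-xiao-strip-theorem} and completeness), but until the winding numbers are actually computed correctly -- including the equal-width case -- the proof has a genuine gap.
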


\begin{proof}
This follows from the tilted-grim-reaper-like behavior of $u$ and $\tilde u$ as $|x|\to\infty$ (Theorem~\ref{spruck-xiao-strip-theorem}). That together with a classical theorem of Rado (Theorem~\ref{rado-theorem} below) gives~\eqref{shifted-item}. To prove~\eqref{nested-item}, consider the function \[ F:= u - \tilde u: [-L,L] \times (\tilde a, \tilde b) \to \RR. \] By the asymptotic behavior of $u(x,y)$ and $\tilde u(x,y)$ as $|x| \to \infty$, we can choose $L$ large enough so that the restriction of $F$ to $\{L\}\times (\tilde\alpha,\tilde\beta)$ has at most one local minimum, and likewise for $\{-L\}\times (\tilde\alpha,\tilde\beta)$. (Note that $F=+\infty$ on the other two edges of the rectangular domain.) We can also choose $L$ so that there are no critical points of $F$ on $\{x=\pm L\}$. It follows by Morse Theory (see~\cite{scherkon}*{Theorem~7.1}(17)) that $F$ has at most one interior critical point, and it is a simple critical point. (Note that in that theorem, $c_0(M_a)$ is at most $2$, $c_1(M_a)$ is nonnegative, and $\chi(M_a)$ is at least one.)
\end{proof}

\begin{corollary}\label{same-x-corollary}
If $u, \hat{u}: \RR \times (-b,b) \to\RR$ are complete translators and if
\[
    Du(x_0,y_0)=D\hat{u}(\hat{x}_0,\hat{y}_0),
\]
then $y_0=\hat{y}_0$.
\end{corollary}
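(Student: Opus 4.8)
The plan is to argue by contradiction and to reduce to Proposition~\ref{different-intervals-proposition}\eqref{shifted-item} by applying a rigid motion of $\RR^3$ to one of the two graphs. Suppose $y_0\neq\hat y_0$. Since both the hypothesis $Du(x_0,y_0)=D\hat u(\hat x_0,\hat y_0)$ and the desired conclusion are symmetric under interchanging $(u,x_0,y_0)$ with $(\hat u,\hat x_0,\hat y_0)$, we may assume $\delta:=y_0-\hat y_0>0$; and since $y_0,\hat y_0\in(-b,b)$, this forces $0<\delta<2b$. I would then consider the function
\[
   \tilde u(x,y):=\hat u\bigl(x+\hat x_0-x_0,\ y-\delta\bigr),\qquad (x,y)\in\RR\times(-b+\delta,\,b+\delta).
\]
Its graph is the graph of $\hat u$ translated by the vector $(x_0-\hat x_0,\ \delta,\ 0)$, so $\tilde u$ is again a complete translator, now over the shifted strip $\RR\times(-b+\delta,\,b+\delta)$. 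By the chain rule, $D\tilde u(x_0,y_0)=D\hat u(\hat x_0,\hat y_0)=Du(x_0,y_0)$, and the point $(x_0,y_0)$ lies in the common domain $\RR\times(-b+\delta,\,b)$ of $u$ and $\tilde u$ — indeed $y_0<b$, and the inequality $y_0>-b+\delta$ is equivalent to $\hat y_0>-b$. Hence $(x_0,y_0)$ is a critical point of $u-\tilde u$.

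Next I would check the interleaving of the two strips: with $(a,b)=(-b,b)$ for $u$ and $(\tilde a,\tilde b)=(-b+\delta,\,b+\delta)$ for $\tilde u$, the inequalities $a<\tilde a<b<\tilde b$ hold precisely because $0<\delta<2b$. Proposition~\ref{different-intervals-proposition}\eqref{shifted-item} then tells us that $u-\tilde u$ has no critical points on $\RR\times(-b+\delta,\,b)$, contradicting the previous paragraph. Therefore $y_0=\hat y_0$.

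One point requires a remark. Proposition~\ref{different-intervals-proposition} is stated for strictly convex translators, whereas here $u$, $\hat u$, or $\tilde u$ might instead be a grim reaper or tilted grim reaper surface. Strict convexity, though, enters the proof of that proposition only in order to invoke the tilted-grim-reaper asymptotics of Theorem~\ref{spruck-xiao-strip-theorem}, and a grim reaper or tilted grim reaper surface — as well as any ambient translate of one — trivially has exactly those asymptotics as $x\to\pm\infty$. The remaining ingredients of the proof (that $u-\tilde u$ satisfies a linear elliptic equation on the overlap and so has no interior extremum, and that Rado's theorem forbids an interior critical point given the behaviour of $u-\tilde u$ at the two ends and along the two horizontal boundary lines of the overlapping strips) make no use of convexity. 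Thus Proposition~\ref{different-intervals-proposition}\eqref{shifted-item} applies to any two complete translators over the two strips in question, and the argument above is unaffected. (Alternatively, one can dispose of the cases involving a grim reaper or tilted grim reaper by hand, using that over a fixed strip $\RR\times(-b,b)$ the only such surfaces are, up to horizontal translation and reflection, $\grim_{\pm\theta}$ with $\cos\theta=\pi/(2b)$, whose gradients have constant $x$-component $\pm\tan\theta$ and strictly monotone $y$-component, and comparing these with the gradient of a strictly convex translator over the same strip.)

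I expect the only genuine obstacle to be Proposition~\ref{different-intervals-proposition}\eqref{shifted-item} itself, which is the substantive input and is available to us; recognising that this is the right tool, and that its proof never used strict convexity beyond the asymptotics it extracts from Theorem~\ref{spruck-xiao-strip-theorem}, is the main step. Everything else — choosing the rigid motion so that $u$ and $\tilde u$ become tangent at a common interior point of their overlapping strips, and checking $0<\delta<2b$ and hence $a<\tilde a<b<\tilde b$ — is routine.
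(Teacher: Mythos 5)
Your argument is correct and is essentially the paper's own proof: the paper likewise translates $\hat{u}$ (by $\hat{y}_0-y_0$ in the $y$-direction) so that the two strips are offset, and then invokes Proposition~\ref{different-intervals-proposition}\eqref{shifted-item}. Your extra horizontal translation, which makes the coinciding gradients occur at a single point (so that one literally gets a critical point of the difference), and your remark handling the non-strictly-convex (tilted grim reaper) cases are refinements of the same argument, not a different route.
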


\begin{proof} If $y_0 \ne \hat{y}_0$, then the functions 
 $u$ and  $\tilde u(x,y):=\hat{u}(x,y+ (\hat{y}_0-y_0))$
 would violate Proposition~\ref{different-intervals-proposition}\eqref{shifted-item}.
\end{proof}

\begin{theorem}[Rado's Theorem]\label{rado-theorem}
Suppose that $U$ is a simply connected open subset of $\RR^2$,
and that
\[
   f,g : U\to \RR
\]
are smooth functions such that 
the graphs of $f$ and $g$ and their vertical translates are minimal with
respect to a smooth Riemannian metric.
\begin{enumerate}[\upshape (1)]
\item If $\{f-g=c\}\cap \partial U$ has fewer than $4$ ends,  then $U\cap\{f-g=c\}$ 
has no point at which $Df=Dg$.
\item If $\{f-g=c\}\cap\partial U$ has fewer than $6$ ends for each $c$,
then $f-g$ is a Morse function on $U$.
\end{enumerate}
\end{theorem}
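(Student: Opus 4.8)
The plan is to reduce both statements to the classical topological fact that a function $f-g$ whose graph-difference satisfies a second-order elliptic PDE cannot have degenerate critical behavior without forcing too many ``ends'' of a level set at the boundary. Concretely, I would proceed via the structure of the nodal set of $w:=f-g$ near a critical point, together with a Euler-characteristic / winding-number count on $\partial U$.

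First I would observe that $w=f-g$ satisfies a linear second-order elliptic equation $Lw=0$ without zeroth-order term: since the graphs of $f$, $g$ \emph{and their vertical translates} are minimal for the same metric, the difference of the two mean-curvature operators, evaluated along the segment joining the $1$-jets of $f$ and $g$, produces (by the mean value theorem in jet space) a homogeneous linear operator annihilating $w$; the absence of a $w$-term is exactly the vertical-translation invariance. Hence $w$ enjoys the strong unique continuation property, and its zero set $Z_c:=\{w=c\}$ has the local structure of the nodal set of a solution of such an equation: away from critical points of $w$ it is a $1$-manifold, and at an \emph{interior} critical point $p$ where $w(p)=c$ and $Dw(p)=0$, the set $Z_c$ looks locally like $2k$ arcs through $p$ for some integer $k\ge 2$ (the level set of the leading homogeneous harmonic-type polynomial, which near a degenerate critical point has order $\ge 2$). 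This is the key analytic input; it is standard (Bers, or Hartman--Wintner) but I would cite it rather than reprove it.

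For part (1): suppose for contradiction there is an interior point $p\in U\cap Z_c$ with $Df(p)=Dg(p)$, i.e.\ $Dw(p)=0$. By the nodal-set structure, at least $2k\ge 4$ arcs of $Z_c$ emanate from $p$. Since $U$ is simply connected and $w$ extends continuously to $\overline U$ (we may shrink $U$ slightly, or work with a regular sublevel/superlevel decomposition), each such arc must eventually reach $\partial U$ — it cannot close up in $U$ by unique continuation (a closed loop in $Z_c$ would bound a disk on which $w-c$ vanishes on the boundary, forcing $w\equiv c$ by the maximum principle, contradicting $Dw\ne 0$ generically). Counting endpoints, $Z_c\cap\partial U$ then has at least $4$ ends, contradicting the hypothesis. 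The only subtlety is arcs meeting other critical points before reaching the boundary; but one handles this by a global count: the graph $G=U\cap Z_c$ is a finite graph (finitely many critical points by unique continuation and properness after a harmless exhaustion), every interior vertex has even degree $\ge 4$, every edge that does not close up ends on $\partial U$, and so $2\cdot\#(\text{ends on }\partial U)\ge$ (sum of interior vertex degrees minus twice the number of interior edges) $\ge 4$ as soon as one bad interior vertex exists.

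For part (2): apply the conclusion of part (1) to the $1$-parameter family of level sets. If $w$ is \emph{not} Morse, there is a critical point $p$ with $D^2w(p)$ degenerate; the local nodal set of $\{w=w(p)\}$ at $p$ then has $2k\ge 4$ arcs, and in fact a degenerate critical point is not ``saddle-index $1$'', so one needs the sharper count $2k\ge 6$ — which comes from noting that near a degenerate critical point the leading polynomial has order $\ge 3$ (order $2$ would give a nondegenerate Hessian up to the sign convention, hence a simple saddle or extremum, i.e.\ Morse), yielding at least $6$ boundary ends of $Z_{w(p)}$ and contradicting the hypothesis for $c=w(p)$. Thus $w$ has only nondegenerate critical points, i.e.\ is a Morse function.

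I expect the main obstacle to be the global bookkeeping in part (1): turning the purely local ``$\ge 2k$ arcs at $p$'' statement into the clean ``$\ge 4$ ends on $\partial U$'' conclusion, in the presence of possibly many interior critical points and of the need to pass to a slightly smaller domain so that $w$ is well-behaved up to the boundary. The cleanest route is the finite-graph Euler-characteristic argument sketched above (interior vertices of even degree $\ge 4$, no contractible cycles by unique continuation, $U$ simply connected), which I would spell out carefully; everything else (the PDE satisfied by $w$, the nodal-set structure theorem, unique continuation) is citable from the literature.
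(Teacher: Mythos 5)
Your proposal is essentially correct, but note that the paper does not actually prove this statement: it simply cites Rad\'o's book, so there is no in-text argument to compare against. What you have written is the standard modern reconstruction of the Rad\'o-type result: the difference $w=f-g$ satisfies a homogeneous second-order linear elliptic equation with no zeroth-order term (the vertical-translation invariance is exactly what kills that term), so by Bers/Hartman--Wintner the level set through an interior critical point consists of $2k\ge 4$ equiangular arcs, and a degenerate critical point forces the Hessian to vanish entirely (it is trace-free with respect to the principal coefficients, so degenerate implies zero), giving $2k\ge 6$ arcs; simple connectivity plus the maximum principle excludes closed loops, and the tree/counting argument converts local arcs into boundary ends. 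This is exactly the right skeleton, and part (2) in particular is handled correctly. Two small caveats you should acknowledge if you write it up: (i) the case $w\equiv c$ must be excluded (the statement is vacuously false for $f\equiv g$, and your ``contradiction'' there is really just the observation that the theorem implicitly assumes $f-g\not\equiv c$, which holds in all the paper's applications); and (ii) when $U$ is unbounded, an arc of the level set could in principle escape to infinity inside $U$ rather than reach $\partial U$, so the count of ``ends'' has to be interpreted in a compactified sense -- in the paper's actual use (Proposition \ref{different-intervals-proposition}, where $U$ is a bounded rectangle and $F=+\infty$ on two edges) this issue does not arise. With those points made explicit, your argument is a complete and self-contained proof, which is more than the paper provides.
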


See~\cite{rado-book}.

\begin{theorem}[Uniqueness of Symmetric $\Delta$-Wings]\label{symmetric-uniqueness}
For each $b>\pi/2$, there exists a unique translator 
\[
  u: \RR \times (-b,b)\to\RR
\]
such that $u(x,y)\equiv u(-x,y)$.
\end{theorem}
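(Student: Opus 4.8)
The plan is to prove existence and uniqueness separately, using the machinery already assembled. Existence is immediate: Theorem~\ref{existence-theorem} produces, for each $b>\pi/2$, a complete translator $u^b:\RR\times(-b,b)\to\RR$ with $u^b(x,y)\equiv u^b(-x,y)\equiv u^b(x,-y)$, so in particular $u^b$ is $x$-symmetric. Thus only uniqueness requires work.

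For uniqueness, suppose $u,v:\RR\times(-b,b)\to\RR$ are both complete translators with $u(x,y)\equiv u(-x,y)$ and $v(x,y)\equiv v(-x,y)$. First I would dispense with the non-strictly-convex case: by Theorem~\ref{convexity-theorem} a complete translating graph that is not strictly convex is a grim reaper or tilted grim reaper surface over a strip; a tilted grim reaper is not $x$-symmetric (its formula~\eqref{tilted-grim-reaper-equation} contains the term $x\tan\theta$), and the (untilted) grim reaper is defined only over a strip of width $\pi$, i.e. $b=\pi/2$, which is excluded. Hence both $u$ and $v$ are strictly convex. By Corollary~\ref{spruck-xiao-strip-corollary}, $u$ attains its maximum at some point $(x_0,0)$; by $x$-symmetry $Du(x_0,0)=0$, and by strict convexity $(x_0,0)$ is the unique critical point of $u$ — and in fact, combining $x$-symmetry with Theorem~\ref{spruck-xiao-strip-theorem}'s conclusion that $u(x,y)\equiv u(x,-y)$, one gets $x_0=0$, so $Du(0,0)=0$. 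Likewise $Dv(0,0)=0$. After a vertical translation we may assume $u(0,0)=v(0,0)$.

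Now set $F=u-v:\RR\times(-b,b)\to\RR$. At the origin $F$ has a critical point with $F(0,0)=0$ and $DF(0,0)=0$. The strategy is to apply Proposition~\ref{different-intervals-proposition}\eqref{nested-item} (or, when the intervals coincide, a direct Rado/Morse-theory argument as in its proof): since both $u$ and $v$ are defined over the \emph{same} strip $\RR\times(-b,b)$ and both are asymptotic as $|x|\to\infty$ to the \emph{same} pair of tilted grim reapers $\grim_{\pm\theta}$ with $\theta=\arccos(2b/\pi)$ (Theorem~\ref{spruck-xiao-strip-theorem}), the difference $F$ has controlled behavior at infinity: on a rectangle $[-L,L]\times(-b,b)$ with $L$ large, $F$ tends to $+\infty$ on the top and bottom edges, and on each vertical edge $\{x=\pm L\}$ the function $F$ has at most one local minimum in $y$ and no critical points. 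The Morse-theoretic count used in the proof of Proposition~\ref{different-intervals-proposition} then forces $F$ to have at most one interior critical point, which must be a simple (nondegenerate) critical point. So $(0,0)$ is the unique critical point of $F$ and $D^2F(0,0)$ is nondegenerate.

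The main obstacle is to upgrade ``$F$ has a unique, nondegenerate critical point at the origin'' to ``$F\equiv 0$.'' For this I would argue by contradiction: if $F\not\equiv 0$, then (after possibly replacing $F$ by $-F$, i.e. swapping $u$ and $v$) the nondegenerate critical point at the origin is either a local max, a local min, or a saddle, and I would rule each out using that $F\to+\infty$ on the horizontal boundary together with the boundary behavior on $\{x=\pm L\}$. A clean way to see the contradiction: the $x$-symmetry of both $u$ and $v$ means $F(x,y)\equiv F(-x,y)$, so on the segment $\{0\}\times(-b,b)$ the function $y\mapsto F(0,y)$ is smooth, tends to $+\infty$ at $y=\pm b$, and has a critical point at $y=0$; meanwhile $\partial_x F(0,y)\equiv 0$, so along $\{x=0\}$ every point is a critical point of the restriction to that line but the second derivative $\partial_x^2 F(0,y)$ has a sign governed by whether $(0,y)$ lies on the ``$u>v$'' or ``$v>u$'' side. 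Comparing the two competitors using Proposition~\ref{different-intervals-proposition}\eqref{nested-item} in the contrapositive — if there \emph{were} a critical point, $D^2u\ne D^2v$ there — together with the uniqueness of the critical point, one localizes all the sign change to the origin; then a sweeping/moving-translation argument (translate the graph of $v$ vertically and slide it through the graph of $u$, using the strong maximum principle and the asymptotics to control what happens at infinity) forces the two graphs to coincide. I expect the delicate point to be making this last sweeping argument airtight at the ends of the strip, where one must invoke Theorem~\ref{spruck-xiao-strip-theorem} to guarantee that no tangency ``escapes to infinity.''
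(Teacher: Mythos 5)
Your existence half is fine and is exactly the paper's (cite Theorem~\ref{existence-theorem}), and your reduction to the strictly convex case and the normalization $Du(0,0)=D\hat u(0,0)=0$, $u(0,0)=v(0,0)$ are correct. The gap is in the uniqueness half, at the point where you invoke Proposition~\ref{different-intervals-proposition}\eqref{nested-item} ``or a direct Rado/Morse-theory argument'' for $F=u-v$ with \emph{both} functions defined over the same strip $\RR\times(-b,b)$. That proposition requires strictly nested intervals $a<\tilde a<\tilde b<b$, and its proof hinges on the fact that $F$ tends to $+\infty$ on the two horizontal edges $y=\tilde a,\tilde b$, because there the inner function goes to $-\infty$ while the outer one stays finite. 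When the strips coincide, both $u$ and $v$ blow up (with unbounded gradient) as $y\to\pm b$, and nothing controls the sign, size, or level-set structure of their difference near the horizontal boundary; your assertion that ``$F$ tends to $+\infty$ on the top and bottom edges'' is unjustified, so the Morse/Rado count of critical points does not go through. (Theorem~\ref{spruck-xiao-strip-theorem} only controls $u$ and $v$ as $|x|\to\infty$ on compact sub-strips, and only after recentering by the a priori different quantities $u(\zeta,0)$ and $v(\zeta,0)$, so it does not repair this.) Moreover, even granting a unique nondegenerate critical point of $F$, your passage to $F\equiv0$ is only a sketch: the vertical sweeping argument needs an initial ordering $v+t\ge u$ for large $t$, which is again unclear because the two functions may diverge at different rates as $y\to\pm b$, and it needs a reason why a first tangency cannot escape either to $|x|\to\infty$ or to the lateral boundary $y\to\pm b$; you acknowledge this but do not close it.

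The paper avoids comparing $u$ and $\hat u$ over the same strip altogether. It works only along the symmetry axis: since $\pdf{u}x=\pdf{\hat u}x=0$ on $\{0\}\times(-b,b)$ and, by strict convexity and completeness, $y\mapsto\pdf{u}y(0,y)$ decreases from $0$ to $-\infty$ on $[0,b)$, for each $y_0$ one finds $\hat y_0$ with $D\hat u(0,\hat y_0)=Du(0,y_0)$. Corollary~\ref{same-x-corollary} then forces $\hat y_0=y_0$; its proof applies Proposition~\ref{different-intervals-proposition}\eqref{shifted-item} to a $y$-\emph{shifted} copy of $\hat u$, i.e.\ to two genuinely different (overlapping, non-nested) strips, precisely the situation where the boundary behavior of the difference \emph{is} controlled. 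This yields $Du(0,y)\equiv D\hat u(0,y)$, and Cauchy--Kowalevski finishes. That matching-of-gradients-on-the-axis step is the mechanism your proposal is missing; without it (or some substitute giving control of $u-v$ at $y=\pm b$), the argument does not close.
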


\begin{proof}
We proved existence in Theorem~\ref{existence-theorem}, 
so it suffices to prove uniqueness.  Suppose that $u$ and $\hat{u}$ both satisfy the
hypotheses.
Let $0<y_0<b$.  Then 
\[
  \pdf{u}y(0,y_0)<0.
\]
Since $\pdf{u}y(0,y_0)$ decreases from $0$ to $-\infty$ on the
interval $[0,b)$, there is an  $\hat{y}_0\in [0,b)$ such that 
\[
    \pdf{\hat{u}}{y}(0,\hat{y}_0)= \pdf{u}y(0,y_0).
\] 
By the symmetry assumption, 
\[
  \pdf{u}x = \pdf{\hat{u}}{x} =0 \quad \text{on $\{0\} \times (-b,b)$}.
\]
Thus $Du(0,y_0)=D\hat{u}(0,\hat{y}_0)$.
Hence by Corollary~\ref{same-x-corollary}, $\hat{y}_0=y_0$, so
\[
   Du(0,y_0)=D\hat{u}(0,y_0).
\]

Since $y_0\in [0,b)$ was arbitrary, we have proved
 that $Du(0,y)\equiv D\hat{u}(0,y)$ for all $y\in [0,b)$.  Hence (since $u(0,0)=\hat{u}(0,0)$)
we have $u\equiv \hat{u}$ by Cauchy-Kowalevski.
\end{proof}

\begin{corollary}[Continuous Dependence]\label{continuous-dependence-corollary}
For $b>\pi/2$, let $u^b:\RR \times (-b,b) \rightarrow \R$ be the unique translator such that
$u^b(0,0)=0$ and such that $u^b(x,y)\equiv u^b(-x,y)$.
Then $u^b$ depends continuously on $b$: as $b\to d$, the function $u^b$ converges
smoothly to $u^d$.
\end{corollary}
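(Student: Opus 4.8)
The plan is to combine a compactness argument with the uniqueness Theorem~\ref{symmetric-uniqueness}. Fix $d>\pi/2$ and let $b_i\to d$; since the limit will be independent of the sequence, it suffices to show that every subsequence of $(u^{b_i})$ has a further subsequence converging smoothly to $u^d$. Put $B:=1+\sup_i b_i<\infty$. By Theorem~\ref{existence-theorem}\eqref{gradient-bound-item} (equivalently Proposition~\ref{rectangle-gradient-bound}),
\[
   (b_i-|y|)\,\sqrt{1+|Du^{b_i}(x,y)|^2}\le C(B)\qquad\text{on }\RR\times(-b_i,b_i).
\]
Hence on any compact $K\subset\RR\times(-d,d)$ with $\max_K|y|=\rho<d$, we have $b_i>\rho$ for all large $i$, so $\sqrt{1+|Du^{b_i}|^2}\le C(B)/(b_i-\rho)\le 2C(B)/(d-\rho)$ on $K$, uniformly in $i$; together with $u^{b_i}(0,0)=0$ this also gives a uniform bound on $u^{b_i}$ over $K$. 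Standard interior estimates for the translator equation (uniformly elliptic on such a set), equivalently curvature estimates for stable minimal surfaces in the Ilmanen metric, then yield a subsequence along which $u^{b_i}\to v$ in $C^\infty_{\mathrm{loc}}(\RR\times(-d,d))$, where $v$ is a translator over $\RR\times(-d,d)$. From the symmetries of the $u^{b_i}$ (Theorem~\ref{existence-theorem}(a)) we get $v(x,y)\equiv v(-x,y)\equiv v(x,-y)$, and clearly $v(0,0)=0$.

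The crucial point is that $v$ is \emph{complete} (equivalently, that it does not extend to a translator over a strictly wider strip, so that the edge of the limiting strip does not degenerate). The cleanest route is to re-run the completeness argument from the proof of Theorem~\ref{existence-theorem}. Each $u^{b_i}$ is itself a smooth limit $\lim_{L\to\infty}\bigl(u_{L,b_i}-u_{L,b_i}(0,0)\bigr)$ of the rectangular solutions of Section~\ref{rectangles-section}, and the gradient bound of Proposition~\ref{rectangle-gradient-bound} is uniform over all $b\le B$ and $L\ge1$. A diagonal argument therefore produces, along a further subsequence, integers $L_i\to\infty$ with
\[
   u_{L_i,b_i}-u_{L_i,b_i}(0,0)\longrightarrow v\quad\text{in }C^\infty_{\mathrm{loc}}(\RR\times(-d,d)).
\]
Since $\liminf_{L\to\infty}u_{L,b}(0,0)=\infty$ for every $b>\pi/2$ (Corollary~\ref{height-bound-corollary}), the boundaries of the graphs of $u_{L_i,b_i}-u_{L_i,b_i}(0,0)$ recede to infinity, so exactly as in Theorem~\ref{existence-theorem} the limit $v$ is a complete translator. (Alternatively, one can trap $v$ near each edge $y=\pm d$ between two suitably placed translates of the grim reaper surface whose edges lie on $\{y=\pm d\}$, forcing $|Dv|\to\infty$ there; this again gives completeness.)

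Once $v$ is known to be a complete translator over $\RR\times(-d,d)$ with $v(x,y)\equiv v(-x,y)$ and $v(0,0)=0$, Theorem~\ref{symmetric-uniqueness} (whose uniqueness is up to vertical translation, pinned down here by the value at the origin) forces $v=u^d$. Since every subsequence of $(u^{b_i})$ thus has a sub-subsequence converging smoothly to $u^d$, the full sequence $u^{b_i}$ converges smoothly to $u^d$; as $b_i\to d$ was an arbitrary sequence, $u^b\to u^d$ smoothly as $b\to d$.

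The one place where genuine work is required is the completeness of the limit $v$: the uniform gradient bound is exactly what prevents the domain of $v$ from thinning inside $\RR\times(-d,d)$, but ruling out that $v$ silently extends past the edges (so that it is a bona fide $\Delta$-wing rather than a restriction of a wider one) is the delicate step, which is why the argument is routed through the explicit construction of Section~\ref{rectangles-section} together with Corollary~\ref{height-bound-corollary}, or through grim reaper barriers. Everything else is compactness plus the already-established uniqueness.
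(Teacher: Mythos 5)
Your argument is correct and is essentially the paper's own proof: the uniform gradient bound of Theorem~\ref{existence-theorem}\eqref{gradient-bound-item} gives smooth subsequential convergence to a complete translator over all of $\RR\times(-d,d)$, Theorem~\ref{symmetric-uniqueness} identifies that limit with $u^d$, and independence of the sequence/subsequence finishes. The only difference is that you spell out the completeness of the limit (by rerouting through the rectangular solutions and Corollary~\ref{height-bound-corollary}), a step the paper asserts without comment; the one small point to add there is that along your diagonal sequence $u_{L_i,b_i}(0,0)\to\infty$ with $b_i$ varying, which follows from Corollary~\ref{height-bound-corollary} applied at a fixed $b'\in(\pi/2,\inf_i b_i]$ together with monotonicity of $u_{L,b}(0,0)$ in $b$ (comparison principle).
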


\begin{proof}
Let $b(i)\to d$, where $b(i)$ and $d$ are in the interval $(\pi/2,\infty)$. 
By Theorem~\ref{existence-theorem}\eqref{gradient-bound-item}, $u^{b(i)}$ converges smoothly (after passing to a subsequence)
to a complete translator
\[
    u: \RR \times (-d,d)\to \RR.
\]
By~Theorem~\ref{symmetric-uniqueness}, $u=u^d$. Since the limit is independent of the sequence and subsequence, we are done.
\end{proof}

\begin{lemma}\label{spine-comparison-lemma}
Let $a<b$ and let
\begin{align*}
&u: \RR \times (-a,a)\to\RR, \\
&v: \RR\times (-b,b) \to\RR
\end{align*}
be complete translators with $u(0,0)=v(0,0)$ and $Du(0,0)=Dv(0,0)=0$.
Then
\[
   u(x,0) > v(x,0)
\]
for all $x\ne 0$.
\end{lemma}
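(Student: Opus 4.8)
The plan is to compare the two translators on the common strip $\RR\times(-a,a)$ by looking at the difference $F:=u-v$ on this strip and using Proposition~\ref{different-intervals-proposition}\eqref{nested-item} to control its critical points. We are in the "nested" case: $v$ is defined on the wider strip $\RR\times(-b,b)$ and $u$ on the narrower $\RR\times(-a,a)$, with $-b<-a<a<b$. By part~\eqref{nested-item}, $F=u-v$ has \emph{at most one} critical point on $\RR\times(-a,a)$, and at the origin we are handed a critical point for free, since $Du(0,0)=Dv(0,0)=0$. Hence the \emph{only} critical point of $F$ is $(0,0)$, and moreover $D^2u(0,0)\ne D^2v(0,0)$ there, so $(0,0)$ is a nondegenerate critical point of $F$. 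Also $F(0,0)=0$ by the hypothesis $u(0,0)=v(0,0)$.

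Next I would determine the nature of this critical point. Since $u$ is the $\Delta$-wing/grim-reaper-type translator on the narrower strip, it attains its maximum along $\{y=0\}$ (Corollary~\ref{spruck-xiao-strip-corollary}) — in fact at $(0,0)$ after the normalization $Du(0,0)=0$ — and likewise $v$ attains its maximum at $(0,0)$. A clean way to see the sign is to examine $F$ along the $x$-axis and along the $y$-axis separately, or better, to use the boundary behavior: as $y\to\pm a$, $v$ stays finite (it is smooth up to $y=\pm a$ lying strictly inside its strip) while $u\to-\infty$, so $F\to-\infty$ on the two horizontal edges $y=\pm a$; and as $|x|\to\infty$, both $u$ and $v$ look like tilted grim reapers $\grim_{\mp\theta}$, $\grim_{\mp\tilde\theta}$ with $\theta<\tilde\theta$ (by Theorem~\ref{spruck-xiao-strip-theorem}, since the wider strip forces a larger tilt angle), and a direct comparison of these model surfaces shows $F\to-\infty$ as $|x|\to\infty$ as well. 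Thus $F\to-\infty$ on the entire "boundary at infinity" of the strip $\RR\times(-a,a)$, so $F$ attains a global maximum at an interior point; that interior maximum is a critical point, hence must be $(0,0)$, and $F(0,0)=0$. Since $(0,0)$ is the \emph{unique} critical point and it is the global max with value $0$, we get $F<0$, i.e. $u<v$...

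Wait — that gives the wrong inequality, so I need to be careful about which function is which; let me re-read. We want $u(x,0)>v(x,0)$. The point is that $u$, living on the \emph{narrower} strip, is the "taller, more sharply peaked" one: for the grim reaper $\grim$ on $(-\pi/2,\pi/2)$ versus $\grim_\theta$ on a wider strip, $\grim(x,0)=0>\,$(negative constant)$\,+x\tan\theta$ only along $x=0$; away from $x=0$ the tilted one wins. The resolution is that the comparison is \emph{only along the spine} $\{y=0\}$, not on the whole strip, and the correct statement is obtained by restricting attention to how fast each drops off in $x$. So the right approach is: on the rectangle $[-L,L]\times(-a,a)$, $F=u-v$ has its only critical point at $(0,0)$; examine the restriction $\varphi(x):=F(x,0)$. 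Then $\varphi(0)=0$, $\varphi'(0)=0$, and $\varphi''(0)>0$ because along the $y$-axis $F$ decreases (both maxima at origin with $u$ dropping faster in $y$, so $\partial_{yy}F(0,0)<0$), forcing $\partial_{xx}F(0,0)>0$ by the sign of the Hessian's... actually by $D^2u(0,0)\ne D^2v(0,0)$ together with the trace relation from the translator equation~\eqref{translator-equation} at the critical point, $\Delta F(0,0)=(|Du|^2+1)^{-1}(\ldots)$ is forced to have a definite sign. The cleanest route: at $(0,0)$ both $u,v$ satisfy $(1+0)\Delta w+1=0$, i.e. $\Delta w(0,0)=-1$, so $\Delta F(0,0)=0$; combined with $\partial_{yy}u(0,0)<\partial_{yy}v(0,0)<0$ (the narrower strip is strictly more curved in $y$, which follows since $\partial_{yy}w(0,0)=-1-\partial_{xx}w(0,0)$ and $u$ has larger $|\partial_{yy}|$) we get $\partial_{xx}F(0,0)=-\partial_{yy}F(0,0)>0$. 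Hence $x=0$ is a strict local minimum of $\varphi$. Now I would run the argument in $x$: since $(0,0)$ is the unique critical point of $F$ on the whole strip, $F$ has no critical point on $\{y=0\}\setminus\{0\}$, so $\varphi'(x)\ne 0$ for $x\ne0$; combined with $\varphi$ having a strict local min at $0$ and $\varphi(0)=0$, we conclude $\varphi(x)>0$ for all $x\ne0$, which is exactly $u(x,0)>v(x,0)$.

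The main obstacle I expect is making the reduction to a finite rectangle rigorous: Proposition~\ref{different-intervals-proposition}\eqref{nested-item} is stated for the infinite strip and its proof already invokes truncation to $[-L,L]\times(\tilde a,\tilde b)$ with $F=+\infty$ on the vertical edges and a Morse-theory count via~\cite{scherkon}*{Theorem~7.1}; I would need to check that the asymptotics of $u-v$ as $|x|\to\infty$ (from Theorem~\ref{spruck-xiao-strip-theorem}, comparing $\grim_{\pm\theta}$ with $\grim_{\pm\tilde\theta}$ for $\theta<\tilde\theta$) genuinely give $F\to-\infty$ uniformly on each compact $y$-interval and that there are no critical points of $F$ on $\{x=\pm L\}$ for $L$ large, so that the "unique critical point" conclusion, the identification of $(0,0)$ as that critical point, and the global-maximum-at-infinity picture all hold simultaneously. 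The secondary technical point is the sign of $\partial_{yy}F(0,0)$ — i.e. that the narrower $\Delta$-wing really is strictly more curved at its apex than the wider one — which I would extract either from the fact that $D^2u(0,0)\ne D^2v(0,0)$ (Proposition~\ref{different-intervals-proposition}\eqref{nested-item}) together with $\Delta u(0,0)=\Delta v(0,0)=-1$, or, if a sign is needed rather than just non-equality, from continuous dependence on the width (Corollary~\ref{continuous-dependence-corollary}) and monotonicity of $b\mapsto\partial_{yy}u^b(0,0)$ under the moving-plane / Alexandrov comparisons already available.
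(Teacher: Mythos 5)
Your overall skeleton (the difference $F=u-v$ has at most one critical point by Proposition~\ref{different-intervals-proposition}\eqref{nested-item}, the origin is that critical point, then argue along the spine $\{y=0\}$) is the same as the paper's, but the step where you decide \emph{which way} the inequality goes is a genuine gap. You reduce the sign to the claim $\partial_{yy}u(0,0)<\partial_{yy}v(0,0)$, i.e.\ that the narrower wing is strictly more curved in $y$ at its apex. The trace identity $\Delta u(0,0)=\Delta v(0,0)=-1$ together with $D^2u(0,0)\ne D^2v(0,0)$ (and $\partial_{xy}F(0,0)=0$ by symmetry) only yields $\partial_{xx}F(0,0)=-\partial_{yy}F(0,0)\ne 0$, not its sign; and your fallback --- monotonicity of $b\mapsto \partial_{yy}u^b(0,0)$ in the width --- is established nowhere in the paper and is of essentially the same depth as the lemma itself (the lemma is precisely the paper's tool for comparing translators over different strips), so invoking it is circular. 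Without that sign, your argument concludes equally well that $u<v$ on the spine. The missing input is the asymptotic behavior as $x\to\pm\infty$ from Theorem~\ref{spruck-xiao-strip-theorem}: since $a<b$ the tilt angles satisfy $\theta_a<\theta_b$, hence $\lim_{x\to+\infty}\bigl(\partial_xu(x,0)-\partial_xv(x,0)\bigr)=\tan\theta_b-\tan\theta_a>0$. This is exactly what the paper uses: combined with the absence of critical points for $x\ne 0$ it forces $\partial_x(u-v)(x,0)>0$ on $(0,\infty)$, and integration from $0$ gives the claim (similarly for $x<0$). In your formulation you could equivalently note $\varphi(x)\to+\infty$ as $x\to+\infty$, which together with $\varphi'\ne 0$ on $(0,\infty)$ forces $\varphi$ increasing; either way no second-order information at the origin is needed.

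A smaller but real gap: from ``$(0,0)$ is the only critical point of $F$'' you infer $\varphi'(x)\ne 0$ for $x\ne 0$. That inference requires $\partial_yF(x,0)\equiv 0$ along the whole spine, i.e.\ that both $u$ and $v$ are even in $y$ so that $\partial_yu(x,0)=\partial_yv(x,0)=0$ (this is the symmetry statement in Theorem~\ref{spruck-xiao-strip-theorem}, and it is the first line of the paper's proof); otherwise $\partial_xF(x,0)=0$ alone would not make $(x,0)$ a critical point of $F$. With the symmetry stated and the asymptotic-slope comparison replacing the unproven apex-curvature monotonicity, your argument becomes the paper's proof; also note that your first, abandoned global-maximum attempt had the asymptotics reversed ($F\to+\infty$, not $-\infty$, as $|x|\to\infty$ on compact $y$-intervals), which is another sign that the boundary-at-infinity behavior, not the Hessian at the origin, is what determines the direction of the inequality.
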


\begin{proof}
Note that $\pdf{u}y(x,0)=0=\pdf{v}y(x,0)$ by $(x,y)\to (x,-y)$ symmetry.
Thus if 
\[
   \pdf{u}x(x,0)=\pdf{v}x(x,0),
\]
then $(x,0)$ is a critical point of $u-v$.  
By Proposition~\ref{different-intervals-proposition}\eqref{nested-item}, 
 $u-v$ has only one critical point (namely $(0,0)$).
Thus
\[
   \text{$\pdf{u}x(x,0)\ne \pdf{v}x(x,0)$ for all $x\ne 0$.}
\]
By Theorem~\ref{spruck-xiao-strip-theorem},
\[
  \lim_{x\to\infty} \left( \pdf{u}x(x,0)-\pdf{v}x(x,0) \right) > 0.
\]
Thus
\[
    \text{$\pdf{u}x(x,0) > \pdf{v}x(x,0)$ for all $x> 0$}.
\]
Integrating from $0$ gives 
\[
    \text{$u(x,0) > v(x,0)$ for all $x> 0$}.
\]
Exactly the same argument shows that $u(x,0) > v(x,0)$ for all $x<0$.
\end{proof}

\begin{theorem}[Existence and Uniqueness of $\Delta$-Wings]\label{unique-Delta-wings-theorem}
Let $b>\pi/2$. Then, modulo translations, there is a unique complete translator
 $u:\RR \times (-b,b)\rightarrow \RR$ that is not a tilted grim reaper.
 \end{theorem}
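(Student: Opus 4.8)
The plan is to reduce an arbitrary complete translator $u$ over $\RR\times(-b,b)$ that is not a tilted grim reaper to the symmetric $\Delta$-wing $u^b$ built in Theorem~\ref{existence-theorem} (we already know $u^b$ is the unique $x$-symmetric translator over the strip by Theorem~\ref{symmetric-uniqueness}, but $x$-symmetry will not be assumed of the given $u$). Existence is clear, since $u^b$ is complete with positive Gauss curvature and so is not a tilted grim reaper. For uniqueness I would first normalize. A translating graph has mean curvature $\langle\nu,\ee_3\rangle=(1+|Du|^2)^{-1/2}$, which never vanishes, so Theorem~\ref{convexity-theorem} applies: since $b>\pi/2$ rules out the grim reaper surface and $u$ is not a tilted grim reaper by hypothesis, $u$ is strictly convex. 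By Theorem~\ref{spruck-xiao-strip-theorem} we then have $u(x,y)\equiv u(x,-y)$, and by Corollary~\ref{spruck-xiao-strip-corollary} the maximum of $u$ is attained at some $(x_0,0)$; after a horizontal and a vertical translation I may assume $u(0,0)=0=\max u$, so $Du(0,0)=0$.

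The core step is to identify the spine profile $x\mapsto u(x,0)$ with that of $u^b$. For $b'\in(\pi/2,\infty)$, let $u^{b'}:\RR\times(-b',b')\to\RR$ be the normalized symmetric $\Delta$-wing of Corollary~\ref{continuous-dependence-corollary}; each $u^{b'}$ is strictly convex, symmetric in both variables, and attains its maximum at the origin, so $Du^{b'}(0,0)=0$. Fix $x\ne0$. For $b'<b$, apply Lemma~\ref{spine-comparison-lemma} with the narrow strip $\RR\times(-b',b')$ carrying $u^{b'}$ and the wide strip $\RR\times(-b,b)$ carrying $u$: this gives $u^{b'}(x,0)>u(x,0)$, and letting $b'\uparrow b$ and using the smooth convergence $u^{b'}\to u^b$ of Corollary~\ref{continuous-dependence-corollary} yields $u^b(x,0)\ge u(x,0)$. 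Symmetrically, for $b''>b$, Lemma~\ref{spine-comparison-lemma} with $u$ on the narrow strip and $u^{b''}$ on the wide strip gives $u(x,0)>u^{b''}(x,0)$, and letting $b''\downarrow b$ gives $u(x,0)\ge u^b(x,0)$. Hence $u(x,0)=u^b(x,0)$ for all $x$.

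To finish, the line $\Gamma=\{y=0\}$ is non-characteristic for the translator equation~\eqref{translator-equation}, since the coefficient of $u_{yy}$ there is $1+u_x^2>0$. On $\Gamma$ we have $u=u^b$ by the previous step, while $\partial_y u\equiv0\equiv\partial_y u^b$ on $\Gamma$ by $y$-symmetry of both functions; thus $u$ and $u^b$ carry the same Cauchy data along $\Gamma$, so $u\equiv u^b$ by Cauchy-Kowalevski. Therefore, up to translation, $u^b$ is the unique complete translator over $\RR\times(-b,b)$ that is not a tilted grim reaper.

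The only genuinely delicate point is the two-sided pinching in the second paragraph: Lemma~\ref{spine-comparison-lemma} must be read as placing $u(\cdot,0)$ strictly below the profile of every narrower $\Delta$-wing and strictly above that of every wider one, and continuous dependence on the width (Corollary~\ref{continuous-dependence-corollary}) then squeezes it exactly onto $u^b(\cdot,0)$. The rest --- the convexity/symmetry normalization and the Cauchy-Kowalevski closing --- is routine, the hard analysis having already been carried out in the cited results (Spruck-Xiao's convexity and strip theorems, and the existence, uniqueness, and continuous dependence of the symmetric $\Delta$-wings).
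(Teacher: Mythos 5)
Your proposal is correct and follows essentially the same route as the paper's own proof: normalize so that $\max u=u(0,0)=0$ using Corollary~\ref{spruck-xiao-strip-corollary}, sandwich the spine $u(\cdot,0)$ between the spines of narrower and wider symmetric $\Delta$-wings via Lemma~\ref{spine-comparison-lemma}, squeeze with the continuous dependence of Corollary~\ref{continuous-dependence-corollary}, and conclude $u\equiv u^b$ by Cauchy--Kowalevski from the matching Cauchy data on $\{y=0\}$. The extra details you supply (strict convexity via Theorem~\ref{convexity-theorem}, the $y$-symmetry, and the non-characteristic nature of the line $\{y=0\}$) are consistent with, and merely make explicit, what the paper leaves implicit.
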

 
\begin{proof}
We already proved existence (Theorem~\ref{symmetric-uniqueness}), 
so it suffices to prove that if $u:\RR\times(-b,b) \to \RR$ is any
complete translator (other than a tilted grim reaper), 
then (after a translation) $u$ is equal to the unique symmetric translator $u^b$
given by Theorem~\ref{symmetric-uniqueness}.
By Corollary~\ref{spruck-xiao-strip-corollary}, $u$ attains its maximum.  By translating, we can assume that
\[
    \max u = u(0,0)=0.
\]
By Lemma~\ref{spine-comparison-lemma},
\[
   u^c(x,0) \le u(x,0) \le u^a(x,0) \quad\text{for all $a<b<c$ and all $x\in \RR$.}.
\]
Letting $a$ and $c$ tend to $b$ gives (see Corollary~\ref{continuous-dependence-corollary})
\[
   u(x,0) = u^b(x,0) \quad\text{for all $x$}.
\]
Since $\pdf{u}y(x,0)=0=u^b_y(x,0)$ for all $x$, we see (by Cauchy-Kowalevski, for example) that $u=u^b$.
\end{proof}

\begin{comment}
\begin{figure}[htbp]
\begin{center}
\includegraphics[height=9cm]{rado-1.png}
\caption{The set  $\{w=0\}$}
\label{fig:rado-1}
\end{center}
\end{figure}
\end{comment}

%%%%% half-plane

\section{Non-existence of translating graphs over half-planes}

In this section, we prove that no complete translators in $\R^3$ are
graphs over a half-plane. The first result that we need in the proof is about
the image under the Gauss map of such a translator. Let $M$ be a complete translator and
$\nu: M \rightarrow \SS^2$ its Gauss map. If we assume that $M$ is a graph, then
clearly $\nu(M) \subseteq H^+$, where $H^+$ represent the upper hemisphere.

The next lemma  says that the Gauss image of a complete translating graph is a domain 
in $H^+$ bounded by 0, 1, or 2 great semicircles.

\begin{lemma}[Gauss map lemma] \label{Gauss-map-lemma}
Let $M\subset \RR^3$ be a complete, strictly convex translator that is 
the graph of a function $u:\Omega\subset\RR^2\to \RR$.
Then the Gauss map is a diffeomorphism from $M$  onto an open subset $W$ of the
upper hemisphere $H^+$.
Furthermore, one of the following holds:
\begin{enumerate}[\upshape (1)]
\item $\nu(M)$ is the entire upper hemisphere.
\item $\nu(M)$ is one of the two components of $H^+\setminus C$, where $C$ is a great semicircle in $H^+$.
\item $\nu(M)$ is the region between $C'$ and $C''$, where $C'$ and $C''$ are two disjoint great semicircles in $H^+$.
\end{enumerate}
\end{lemma}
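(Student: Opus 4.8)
The plan is to identify the Gauss image $\nu(M)$ with the image of the gradient map of $u$, to pin down its topological boundary inside the open hemisphere $H^+$ using Corollary~\ref{divergent-sequence-corollary}, and then to transport everything to $\RR^2$ by a gnomonic projection, under which great semicircles become straight lines. First I would show that $\nu$ is a diffeomorphism onto an open set $W\subseteq H^+$. Since $M$ is the graph of $u$ with upward unit normal, $\nu=(1+|Du|^2)^{-1/2}(-Du,1)$, and $p\mapsto(1+|p|^2)^{-1/2}(-p,1)$ is a diffeomorphism of $\RR^2$ onto $H^+$, so it suffices to see that $Du:\Omega\to\RR^2$ is a diffeomorphism onto an open set. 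Strict convexity of $M$ says its Gauss curvature is positive, i.e.\ $\det D^2u=K\,(1+|Du|^2)^2>0$ on $\Omega$; hence $D^2u$ is everywhere nonsingular (so $Du$ is a local diffeomorphism) and definite, of one fixed sign by connectedness of $\Omega$. As $\Omega$ is convex (it is $\RR^2$, a halfplane, or a strip), the identity $\bigl(Du(p)-Du(q)\bigr)\cdot(p-q)=\int_0^1(p-q)^{\top}D^2u\bigl(q+t(p-q)\bigr)(p-q)\,dt$ has constant sign, so $Du$ is injective. Thus $W:=\nu(M)$ is open in $H^+$, $\nu$ is a diffeomorphism onto $W$, and in particular $\nu^{-1}:W\to M$ is continuous.

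Next I would prove that $\Sigma:=(\overline W\setminus W)\cap H^+$ is a union of open great semicircles. Given $\bar\nu\in\Sigma$, write $\bar\nu=\lim\nu(q_i)$ with $q_i\in M$; since $\nu^{-1}$ is continuous and $\bar\nu\notin W$, no subsequence of $(q_i)$ converges in $M$, so $q_i\to\infty$, while each $M-q_i$ contains $0$ with $\nu_{M-q_i}(0)=\nu(q_i)$. By Corollary~\ref{divergent-sequence-corollary}, after passing to a subsequence $M-q_i$ converges smoothly to a translator $M'$ that is a vertical plane, a grim reaper surface, or a tilted grim reaper surface, and $\bar\nu=\nu_{M'}(0)\in\nu(M')$. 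Smooth convergence gives $\nu(M')\subseteq\overline W$, and $\nu(M')\cap W=\emptyset$: if $\nu_{M'}(q')\in W$ with $q'=\lim(r_i-q_i)$, $r_i\in M$, then $r_i=\nu^{-1}(\nu(r_i))\to\nu^{-1}(\nu_{M'}(q'))\in M$ would be bounded, contradicting $r_i\to\infty$. Since $\bar\nu\in H^+$ while the Gauss image of a vertical plane lies on the equator, $M'$ is a (possibly tilted) grim reaper, so $\nu(M')$ is an open great semicircle $C\subset H^+$ with its endpoints on the equator; then $\bar\nu\in C$ and $C\subseteq\overline W\setminus W$, hence $C\subseteq\Sigma$.

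Finally I would count. Fix a gnomonic projection $\phi:H^+\to\RR^2$, a diffeomorphism carrying each great semicircle of $H^+$ onto a full straight line. Then $\hat W:=\phi(W)$ is open and connected with $\partial_{\RR^2}\hat W=\phi(\Sigma)$ a union of full straight lines; being disjoint from each such line, $\hat W$ lies in one open halfplane bounded by it, and as $\hat W$ is open, connected, and relatively closed in $\RR^2\setminus\partial_{\RR^2}\hat W$, it coincides with the intersection of those halfplanes, hence is convex. Two of the boundary lines cannot cross, since near a crossing point the boundary of the convex set $\hat W$ would consist of two rays rather than two full lines; so the boundary lines are pairwise parallel, and a convex set cannot have three or more parallel full lines in its boundary. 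Hence $\hat W$ is $\RR^2$, an open halfplane, or the open strip between two parallel lines, which via $\phi^{-1}$ gives alternatives (1), (2), (3) of the lemma, the two semicircles in case (3) being disjoint in $H^+$ because their $\phi$-images are parallel.

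The main obstacle is the middle step. One needs the precise description of limits of divergent sequences — vertical planes and (tilted) grim reapers — which is exactly Corollary~\ref{divergent-sequence-corollary}, and one must upgrade a single accumulation direction $\bar\nu$ to the statement that the \emph{entire} great semicircle through $\bar\nu$ lies in $\partial W$. That upgrade rests on the properness fact that $r_i\to\infty$ in $M$ forces $\nu(r_i)$ out of every compact subset of $W$, which is precisely where the global injectivity of $\nu$ established in the first step is used; the remaining steps are essentially bookkeeping and elementary plane geometry.
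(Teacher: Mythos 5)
Your proposal is correct and follows the paper's own strategy: establish that $\nu$ is a diffeomorphism onto an open $W\subset H^+$, show via divergent sequences and Corollary~\ref{divergent-sequence-corollary} (limits are vertical planes or (tilted) grim reapers) that every point of $H^+\cap\partial W$ lies on a great semicircle contained in $H^+\cap\partial W$, and then conclude the three cases from connectedness of $W$. The only differences are presentational -- you prove injectivity of the Gauss map by monotonicity of the gradient map (the paper asserts this as standard for complete strictly convex surfaces), and you carry out the final counting via gnomonic projection and planar convexity rather than the paper's direct connectedness argument on the hemisphere, which in fact makes explicit the step that $W$ fills out a whole component.
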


\begin{proof}
That $\nu$ maps $M$ diffeomorphically onto its image holds for any complete, strictly convex $M$.
The following two statements are immediate consequences:
\begin{enumerate}[\upshape (i)]
\item If $p_i\in M$ and if $\nu(p_i)$ converges to a limit $v\notin \nu(M)$, then $|p_i|\to \infty$.
\item  If $p_i\in M$ and if $|p_i|\to\infty$, then all the subsequential limits of $\nu(p_i)$ lie in $\partial W$.
\end{enumerate}

Now suppose that $v\in H^+\cap \partial W$.
Choose $p_i\in M$ so that $\nu(p_i)\to v$.  By (i), $|p_i|\to\infty$.
By passing to a subsequence, we can assume that $M-p_i$ converges smoothly
to a translator $M'$.  Let $\nu'$ be the Gauss map of $M'$.  
  By (ii), $\nu'(M')$ is contained in $\partial W$.
Thus $\nu(M')$ has no interior, so $M'$ has Gauss curvature $0$ at all points.
Thus $M'$ is a tilted grim reaper or a vertical plane.  Since $\nu'(0)=v\in H^+$,
$M'$ must be a tilted grim reaper.  Thus $\nu'(M')$ is a great semicircle $C$ containing $v$.

We have shown that every point in $H^+\cap \partial W$ 
lies in a great semicircle contained in $H^+\cap \partial W$.
Thus $H^+\cap \partial W$ is the union of a collection $\mathcal{C}$ of great semicircles.

If $\mathcal{C}$ is empty, (1) holds.  If $\mathcal{C}$ has just one semicircle, then (2) holds.

Suppose that $\mathcal{C}$ contains more than one semicircle.  Let $C'$ and $C''$ be two of them.
Since $W$ is connected, it lies in one of the components of $H^+\setminus (C'\cup C'')$,
and its closure contains $C'\cup C''$.
It follows immediately that $C'$ and $C''$ do not intersect. 
We have shown that all the semicircles in $\mathcal{C}$ are disjoint.  Since $W$ is connected,
there cannot be more than two semicircles in $\mathcal{C}$.
\end{proof}

\begin{corollary}
Suppose that $p_i\in M$ and that $\nu(p_i)$ converges to a point in $H^+\cap\partial W$.
Let $C$ be the great semicircle in $H^+\cap \partial W$ that contains $v$.
Then $M-p_i$ converges to the unique tilted grim reaper $M'$ such that
$\nu'(M')=C$ and $\nu'(0)=v$ (where $\nu'$ is the Gauss map of $M'$.)
\end{corollary}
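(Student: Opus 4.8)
The plan is to realize $M'$ as a smooth subsequential limit of the translates $M-p_i$, to recognize that this limit is a (possibly tilted) grim reaper, and to identify its Gauss image with $C$; since the identification will not depend on the subsequence, the usual argument then upgrades subsequential convergence to convergence of the full sequence. Throughout we keep the standing hypotheses of Lemma~\ref{Gauss-map-lemma}.

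First I would check that $|p_i|\to\infty$. Since $v\in\partial W$ and $W=\nu(M)$ is open, $v\notin\nu(M)$, so observation~(i) in the proof of Lemma~\ref{Gauss-map-lemma} gives $|p_i|\to\infty$; in particular $(p_i)$ is a divergent sequence in $M$. By Corollary~\ref{divergent-sequence-corollary}, after passing to a subsequence, $M-p_i$ converges smoothly to a complete translator $M'$ that is a vertical plane, a grim reaper surface, or a tilted grim reaper surface. Because $p_i\in M$ and the convergence is smooth, $0\in M'$ and its Gauss map $\nu'$ satisfies $\nu'(0)=\lim_i\nu(p_i)=v\in H^+$. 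Since the Gauss image of a vertical plane lies on the equator, $M'$ is not a vertical plane; hence $M'$ is a grim reaper or tilted grim reaper surface, and $\nu'(M')$ is an open great semicircle contained in $H^+$.

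Next I would pin down this semicircle using observation~(ii) in the proof of Lemma~\ref{Gauss-map-lemma}. Given $q\in M'$, smooth convergence produces $\tilde q_i\in M$ with $\tilde q_i-p_i\to q$; then $|\tilde q_i|\to\infty$ (because $|p_i|\to\infty$ while $\tilde q_i-p_i$ stays bounded) and $\nu(\tilde q_i)=\nu_{M-p_i}(\tilde q_i-p_i)\to\nu'(q)$, so observation~(ii) forces $\nu'(q)\in\partial W$. Thus $\nu'(M')\subseteq H^+\cap\partial W$, which by Lemma~\ref{Gauss-map-lemma} is a union of at most two pairwise disjoint great semicircles; as $\nu'(M')$ is connected and these semicircles have disjoint closures in $H^+$, it lies in a single one of them, and since it contains $v$ that one is $C$. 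Comparing arclengths gives $\nu'(M')=C$. So $M'$ is a grim reaper or tilted grim reaper surface with $\nu'(M')=C$ and $\nu'(0)=v$, and there is exactly one such surface: among grim reaper and tilted grim reaper surfaces those with a prescribed Gauss image $C$ form a single orbit under translations, and each is invariant under translation along its rulings, so the requirement $\nu'(0)=v$ determines it. This is the surface $M'$ named in the statement.

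Finally, since $M'$ does not depend on the chosen subsequence, the standard subsequence argument gives smooth convergence of the full sequence $M-p_i$ to $M'$. I do not expect a serious obstacle: the corollary essentially repackages the ingredients already used for Lemma~\ref{Gauss-map-lemma} together with Corollary~\ref{divergent-sequence-corollary}. The two points needing a little care are verifying that every point of the limit arises from a divergent sequence in $M$, so that observation~(ii) applies and yields $\nu'(M')\subseteq\partial W$, and excluding the degenerate possibility that $M'$ is a vertical plane.
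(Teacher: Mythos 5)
Your argument is correct and is essentially the one the paper intends: the corollary is stated without a separate proof because it follows from exactly the chain of reasoning in the proof of Lemma~\ref{Gauss-map-lemma} (observations (i)--(ii), subsequential smooth convergence to a zero-Gauss-curvature translator, exclusion of the vertical plane via $\nu'(0)=v\in H^+$), together with the standard uniqueness-of-limit upgrade to full-sequence convergence, which you carry out, including the correct observation that the ruling-invariance of tilted grim reapers makes the limit with $\nu'(M')=C$ and $\nu'(0)=v$ unique.
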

In particular, we get convergence without having to pass to a subsequence.

The next lemma is inspired by results of Spruck and Xiao in~\cite{spruck-xiao}.

\begin{lemma} \label{spruck-xiao-type-lemma}
Let $\Omega$ be a convex open subset of $\R^2$ containing $[0,\infty)\times [-a,a]$.
Suppose that $u:\Omega \rightarrow \R$ is a translator and suppose
that $[0,\infty)\times [-a,a]$ contains a closed, connected, unbounded subset
$C$ such that $|Du|$ is bounded on $C$. Then $|Du|$ is bounded on
$[0,\infty)\times [-a,a].$
\end{lemma}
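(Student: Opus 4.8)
\noindent\emph{Proof plan.}  I would argue by contradiction, translating the graph of $u$ out to infinity and using the curve $C$ to anchor the limit.  Write $v=\sqrt{1+|Du|^{2}}$ and $S=[0,\infty)\times[-a,a]$, and suppose $v$ is unbounded on $S$.  Since $\Omega$ is open, convex, and contains $S$, an elementary convexity argument (propagate small disks about $(0,\pm a)$ to the right) gives some $a'>a$ with $[0,\infty)\times[-a',a']\subset\Omega$; hence $u$ is a smooth translator on a neighbourhood of $S$, so $v$ is bounded on every $[0,N]\times[-a,a]$, and there are points $p_{i}=(x_{i},y_{i})\in S$ with $x_{i}\to\infty$ and $v(p_{i})\to\infty$.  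Put $V=\sqrt{1+\sup_{C}|Du|^{2}}$.  Because $C$ is closed, connected, unbounded, and lies in the horizontal strip $\RR\times[-a,a]$ of finite width, its image under the first projection is an unbounded interval, so for all large $i$ the vertical line $\{x=x_{i}\}$ meets $C$ at a point $q_{i}=(x_{i},\tilde y_{i})$ lying on the \emph{same} vertical line as $p_{i}$, with $v(q_{i})\le V$.  After a subsequence, $y_{i}\to y_{\infty}$ and $\tilde y_{i}\to\tilde y_{\infty}$ in $[-a,a]$.

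Next I would translate.  With $q_{i}^{*}$ the point of the graph $M$ lying over $q_{i}$, set $M_{i}=M-q_{i}^{*}$, the graph of $u_{i}(\zeta)=u(\zeta+(x_{i},\tilde y_{i}))-u(x_{i},\tilde y_{i})$ over $\Omega_{i}=\Omega-(x_{i},\tilde y_{i})$.  Then $u_{i}(0,0)=0$, $|Du_{i}(0,0)|=|Du(q_{i})|\le\sqrt{V^{2}-1}$, while $|Du_{i}(0,y_{i}-\tilde y_{i})|=|Du(p_{i})|\to\infty$, with $y_{i}-\tilde y_{i}\to\beta:=y_{\infty}-\tilde y_{\infty}$ bounded; also $\Omega_{i}\supset[-x_{i},\infty)\times[-a'-\tilde y_{i},a'-\tilde y_{i}]$, so the planar image $(0,y_{i}-\tilde y_{i})$ of $p_{i}$ stays at distance $\ge a'-a$ from $\partial\Omega_{i}$.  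The $M_{i}$ are complete convex translating graphs (Spruck--Xiao \cite{spruck-xiao}), hence stable $g$-minimal with uniform local area bounds, so a subsequence converges smoothly — on the largest planar subdomain over which the $|Du_{i}|$ stay locally bounded — to a complete convex translating graph $\hat u$ over a convex domain $\Omega_{N}$ with $v_{\hat u}(0,0)\le V$, while $\Omega_{i}\to\Omega_{\infty}\supset\Omega_{N}$, a convex open set containing $\RR\times(-a'-\tilde y_{\infty},a'-\tilde y_{\infty})$.  Because $|Du_{i}|$ blows up at $(0,\beta)$, one must have $(0,\beta)\in\partial\Omega_{N}$, even though $(0,\beta)$ is interior to $\Omega_{\infty}$.

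The contradiction would then come as follows.  By Shahriyari's theorem \cite{shari}, $\Omega_{N}$ is, up to a rigid motion, $\RR^{2}$, a half-plane, or a strip — and, being a Hausdorff limit of the nearly horizontal $\Omega_{i}$, it is $\RR^{2}$, a horizontal half-plane, or a horizontal strip.  It is not $\RR^{2}$ (that would put $(0,\beta)$ interior to $\Omega_{N}$).  By Wang's theorem \cite{wang} there is no complete translating graph over a half-plane, so $\Omega_{N}$ is a strip, and then by Theorems~\ref{convexity-theorem} and~\ref{unique-Delta-wings-theorem} $\hat u$ is a $\Delta$-wing or a (tilted) grim reaper over that strip, so $|D\hat u|$ is finite at every interior point of $\Omega_{N}$.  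It remains to check that $(0,\beta)$ is such an interior point: this should follow by carrying the uniform distance $a'-a$ (from $p_{i}$ to $\partial\Omega$) through the convergence, together with the bound $v_{\hat u}(0,0)\le V$, which keeps the geometry of $\hat u$ near $(0,0)$ — and hence near the nearby point $(0,\beta)$, since $|\beta|\le2a$ — under control.  Then $v_{\hat u}(0,\beta)<\infty$, so $|Du(p_{i})|$ would be bounded, contradicting $v(p_{i})\to\infty$.  (An alternative, avoiding the structure theory: from $v(p_{i})\to\infty$ one extracts a point of the limiting surface with a \emph{vertical} tangent plane, where the mean curvature vanishes; since the limit is convex, the strong maximum principle for the Jacobi field $\langle\nu,\ee_{3}\rangle$ makes the limiting surface a vertical plane, on which $v\equiv\infty$ — contradicting $v_{M_{i}}(0)\le V$.)

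The hard part will be the compactness step: ensuring that the translated graphs converge to a limit that genuinely registers the blow-up — that $(0,\beta)$ is a non-removable boundary point of $\Omega_{N}$ lying interior to $\Omega_{\infty}$ — and that the vertical offsets $u(x_{i},y_{i})-u(x_{i},\tilde y_{i})$ do not carry the relevant part of the limit off to infinity.  This is exactly where the hypothesis on $C$ is used: the anchor $q_{i}$ has bounded gradient and lies on the same vertical line as $p_{i}$, so near the origin the translated surface is a graph of bounded slope forced to become vertical within a bounded planar distance, whereas without such an anchor the translate near $p_{i}$ could simply be a harmless vertical plane.  Making this rigorous should use (i) the slightly wider strip $[0,\infty)\times[-a',a']\subset\Omega$, so that all points in play stay at definite distance from $\partial M$; (ii) interior gradient and curvature estimates for translating graphs together with the compactness theorem for $g$-area-minimizing surfaces; and (iii) the classification of complete convex translating graphs over strips and planes proved earlier in the paper, along with Wang's theorem.
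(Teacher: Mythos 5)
There is a genuine gap, and it is twofold. First, the lemma assumes only that $u:\Omega\to\RR$ is a translator on a convex domain containing the half-strip: there is no completeness hypothesis (completeness is added only later, in the corollary). Your argument begins by invoking Spruck--Xiao convexity, Shahriyari's classification of domains, Wang's half-plane theorem, and the classification of complete convex graphs over strips -- all of which require the graph to be a \emph{complete} translator -- so at best you are proving a special case of the statement. (Relying on Wang's half-plane non-existence here would also undercut the point of this section, since the lemma feeds into the paper's independent proof of Theorem~\ref{halfplane-theorem}, though that is a matter of economy rather than logic.) The paper's proof is of an entirely different nature: it is a local maximum-principle argument, applying Theorem~\ref{spruck-xiao-gradient-bound} to the function $\sqrt{1+|Du|^2}\,\dist(\cdot,L(\eps))$ on the bounded regions cut out by $C$, a segment $J_b=\{0\}\times[-b,b]$, and the tilted lines $L(\eps)=\{y=-b+\eps x\}$, then letting $\eps\to0$; it needs no completeness, no convexity, no classification results, and no compactness of translated surfaces.

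Second, even granting completeness, the step you yourself flag as ``the hard part'' is exactly where the argument breaks, and it is not repaired by the surrounding structure theory. Translating by $q_i^*$ normalizes the height at the anchor, but the blow-up point sits at height $u(p_i)-u(q_i)$ above it, and nothing you say controls this offset: $|Du|\le\lambda$ on $C$ does not bound $u(p_i)-u(q_i)$, since $p_i\notin C$ and $C$ is merely closed, connected and unbounded (not a graph, not of controlled length). If the offsets diverge, the steep part of $M_i$ escapes vertically and the limit simply does not ``register'' the blow-up. Moreover, by Corollary~\ref{divergent-sequence-corollary} the limit of $M-q_i^*$ is a vertical plane or a (possibly tilted) grim reaper over a strip of \emph{arbitrary horizontal direction}; your claims that the limit domain $\Omega_N$ is horizontal, that $(0,\beta)\in\partial\Omega_N$ forces a contradiction, and that the distance $a'-a$ to $\partial\Omega$ keeps $(0,\beta)$ interior to $\Omega_N$, are all unsubstantiated -- the limit strip can be narrower or differently oriented, with its edge passing between $(0,0)$ and $(0,\beta)$, which is precisely the ``thinning'' phenomenon the paper must rule out elsewhere by separate arguments. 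The parenthetical alternative has the same flaw in mirror image: blowing up at $p_i^*$ yields a vertical plane, but the anchor bound $v(q_i)\le V$ does not pass to that limit because $q_i^*-p_i^*$ may diverge vertically. In short, the contradiction you aim for requires both the anchor and the blow-up point to survive in one limit, and establishing that is the whole difficulty; the paper's $\psi_\eps$ barrier argument is designed precisely to avoid it.
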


\begin{proof}
We can assume that $C$ contains the origin; otherwise replace $C$ 
by the union of $C$ and a segment joining the origin with a point of $C$.
Moreover, we can assume that $C$ has no interior; otherwise replace
$C$ by its topological boundary in $\R^2$.

Take $b>a$ so that $J_b:=\{0\} \times [-b,b] \subset \Omega. $ Let
\begin{equation}\label{lambda}
    \lambda= \sup_{J_b\cup C} |Du| <\infty.
\end{equation}
For $\epsilon >0$, let $L(\epsilon)$ the line $$y=-b+\epsilon \, x.$$
Let $W(\epsilon)$ be the union of the bounded components of $\R^2\setminus
(C \cup J_b \cup L(\epsilon))$ that lie above $L(\epsilon)$. Note that
$W(\epsilon)$ lies in the triangle $T(\epsilon)$ determined by the lines
$x=0$, $y=a$ and $L(\epsilon)$.

Thus the function $\psi_\epsilon: \overline{W(\epsilon)} \rightarrow \R$
$$\psi_\epsilon(p)=\sqrt{1+|Du(p)|^2} \cdot \dist(p,L(\epsilon))$$
attains its maximum at a point $p_\epsilon$.  The point $p_\epsilon$ need not  be
unique, nor need it depend continuously on $\epsilon$. The point 
$p_\epsilon$ does not lie on $L(\epsilon)$ because $\psi_\epsilon$
vanishes on $L(\epsilon)$.
By Theorem~\ref{spruck-xiao-gradient-bound}, it cannot be in the interior 
of $W(\epsilon)$. Thus $p_\epsilon \in C \cup J_b$ (and in the triangle 
$\overline{T(\epsilon)}$.) Then, for every point $p \in \overline{W(\epsilon)}$, we
have
\begin{multline}
\psi_\epsilon(p) \leq \psi_\epsilon(p_\epsilon) \leq \lambda \, \dist (p_\epsilon, L(\epsilon)) \leq 
\lambda \, \dist (p_\epsilon, L(0)) \leq 2 \, \lambda \, b,
\end{multline}
where $L(0)$ is the horizontal line $y=-b$ and where $\lambda$ is given by~\eqref{lambda}.

Now let $W$ be the union of the connected components of 
$$ ([0,\infty) \times [-b, \infty)) \setminus (C\cup J_b) $$
that lie in below the line $y=b$.

Suppose that $(x,y) \in W$. Then $(x,y) \in \overline{W(\epsilon)}$ for all sufficiently small
$\epsilon>0$, so 
$$\psi_\epsilon(x,y) \leq 2 \, \lambda \, b, \quad \mbox{for all such  $\epsilon.$}$$ 
Letting $\epsilon \to 0$ and using that $\dist((x,y),L(0))=y+b \geq b-|y|,$ then we have 
\begin{equation} \label{eq:aqui}
\sqrt{1+|Du(x,y)|^2} \cdot (b-|y|) \leq 2 \, \lambda \, b, \quad \forall (x,y) \in W.
\end{equation}
By continuity, the inequality also holds for all $(x,y) \in \overline W.$ Applying the same argument 
to the lines $y=b-\epsilon x$ shows that the inequality~\eqref{eq:aqui} also holds for all
$(x,y) \in \overline{W^*}$, where $W^*$ is the union of the components of 
$$ ([0,\infty) \times (-\infty,b]) \setminus (C\cup J_b) $$
that lie above the line $y=-b$. But it is not hard to see 
\[
    \overline W \cup \overline{W^*} =[0,\infty) \times[-b,b],
\]
so inequality \eqref{eq:aqui}
 holds for every $(x,y) \in [0,\infty) \times[-b,b].$
\end{proof}

\begin{corollary}
\label{spruck-xiao-gradient-bound2} Suppose,  in addition to the hypotheses of Lemma~\ref{spruck-xiao-type-lemma},
 that the graph $M$ of $u$ is complete and that the domain $\Omega$ is a plane or a half-plane. 
 Let $\{ x_n\}$ be a sequence of real numbers such that $x_n \to \infty.$ Then a subsequence of 
$M-(x_n,0,u(x_n,0))$ converges smoothly to a complete translator $M'$ that is the graph of a function
$u': \Omega' \rightarrow \R$, where $\Omega'$ is the limit of the domains $\Omega-(x_n,0).$
\end{corollary}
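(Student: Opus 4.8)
The plan is to derive the corollary from Lemma~\ref{spruck-xiao-type-lemma} together with the compactness machinery described in the Preliminaries. First I would fix notation: write $p_n = (x_n,0,u(x_n,0))$ and $M_n = M - p_n$, so that $M_n$ is again a complete translator (translators are preserved by ambient translations), and it is the graph of the function $u_n : \Omega - (x_n,0) \to \RR$ given by $u_n(\xi) = u(\xi + (x_n,0)) - u(x_n,0)$. Since $\Omega$ is a plane or a half-plane and the translations are purely horizontal, the domains $\Omega - (x_n,0)$ are again planes or half-planes; after passing to a subsequence they converge (in the sense of Hausdorff convergence of their boundary lines, or equivalently locally in the Kuratowski sense) to a limit $\Omega'$ which is a plane, a half-plane, or (if the boundary lines escape to infinity) all of $\RR^2$. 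This is routine.

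The heart of the matter is a uniform-on-compact-sets gradient bound for the $u_n$, which is exactly what prevents the limit from degenerating (collapsing to a lower-dimensional object, or converging to a multi-sheeted minimal surface, or developing boundary branch points). Here is where Lemma~\ref{spruck-xiao-type-lemma} enters. Because $M$ is a \emph{complete} translating graph, Corollary~\ref{divergent-sequence-corollary} applies to any divergent sequence; but more to the point, I claim that along the positive $x$-axis the gradient $|Du|$ is bounded. Indeed, by Theorem~\ref{spruck-xiao-strip-theorem}-type asymptotics (or, for the half-plane/plane case, by Wang's theorem \cite{wang} that the only such complete translator is the bowl soliton, whose gradient is locally bounded, together with the fact that a half-plane domain is excluded) one sees that $M$ is the bowl soliton and hence has a genuine gradient bound on every compact set; in particular $|Du|$ is bounded on the ray $C = [0,\infty) \times \{0\}$, which is a closed, connected, unbounded subset of any slab $[0,\infty)\times[-a,a] \subset \Omega$. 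Applying Lemma~\ref{spruck-xiao-type-lemma} then gives, for every $a>0$ with $[0,\infty)\times[-a,a]\subset\Omega$, a bound of the form $(b-|y|)\sqrt{1+|Du(x,y)|^2} \le 2\lambda b$ on $[0,\infty)\times[-b,b]$ for suitable $b>a$ and $\lambda = \lambda(a)<\infty$. Translating horizontally, this yields
\[
  (b - |y|)\sqrt{1 + |Du_n(x,y)|^2} \le 2\lambda b
\]
uniformly in $n$ on the corresponding slabs, hence a uniform $C^1$ bound for the $u_n$ on every compact subset of the open half-plane $\RR\times(-b,b)$ (and, by exhausting $a\uparrow\infty$ when $\Omega=\RR^2$, on every compact subset of $\RR^2$).

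With a uniform gradient bound on compact sets in hand, I would invoke the standard compactness theorem for stable minimal surfaces with respect to Ilmanen's metric, as recalled in the Preliminaries: the graphs $M_n$ are $g$-area minimizing in their slabs, so they satisfy local area bounds, have bounded topology, and by \cite{white-curvature-estimates} admit curvature estimates; a subsequence therefore converges smoothly (including up to the boundary, the obvious barrier precluding boundary branch points) to a smooth limit translator $M'$. The uniform gradient bound guarantees that $M'$ is again a graph over the limit domain $\Omega'$, of a function $u' : \Omega' \to \RR$, and that $M'$ is complete because completeness of graphical translators with locally bounded gradient is preserved under smooth convergence (alternatively, $M'$ is a complete translating graph by the same reasoning as in the proof of Theorem~\ref{existence-theorem}). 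I expect the main obstacle to be making precise the interface between the gradient bound coming from Lemma~\ref{spruck-xiao-type-lemma} and the convergence of the domains $\Omega - (x_n,0)$: one must be slightly careful that, as $x_n\to\infty$, the slab $[0,\infty)\times[-a,a]$ on which the bound is valid still covers the region where the limit surface lives, but since horizontal translations in the $x$-direction map such slabs to slabs of the same width, this is a bookkeeping issue rather than a genuine difficulty.
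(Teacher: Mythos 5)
The overall architecture of your argument (uniform gradient bounds on horizontal strips via Lemma~\ref{spruck-xiao-type-lemma}, then the stability/area-minimizing compactness from the Preliminaries to get smooth subsequential convergence to a graph over the limit domain, with the bounds ensuring the limit graph lives over all of $\Omega'$) is the same as the paper's. But the step you call ``the heart of the matter'' contains a genuine error, and it is also an unnecessary detour. The corollary assumes the hypotheses of Lemma~\ref{spruck-xiao-type-lemma}, so the closed, connected, unbounded set $C\subset[0,\infty)\times[-a,a]$ on which $|Du|$ is bounded is part of the given data; you do not need to manufacture it. The paper simply observes that for $\alpha$ slightly above the boundary height $c$ and $\beta$ large, this given $C$ lies in $[0,\infty)\times[\alpha,\beta]$, applies the lemma to such strips, and obtains the uniform gradient bounds. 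Your attempt to produce $C$ instead is flawed in two ways. First, it is circular: you invoke ``the fact that a half-plane domain is excluded,'' but the nonexistence of complete translating graphs over half-planes (Theorem~\ref{halfplane-theorem}) is proved later, using Proposition~\ref{Gauss-image-is-hemisphere}, whose proof uses precisely this corollary; you cannot assume it here. Second, even granting that $M$ were the bowl soliton, a gradient bound on every compact set does not yield a bound on the unbounded ray $[0,\infty)\times\{0\}$; in fact for the bowl soliton $|Du(x,0)|\to\infty$ as $x\to\infty$ (its end is asymptotically cylindrical), so your claim that $|Du|$ is bounded on $C=[0,\infty)\times\{0\}$ is false for that very surface.

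If you delete that paragraph and instead use the hypothesized $C$ exactly as the paper does (noting that it fits inside strips $[0,\infty)\times[\alpha,\beta]\subset\Omega$ when $\Omega=\{y>c\}$, and applying the lemma after a shift in $y$), the rest of your argument --- the bound transfers to $u_n$ because the translations are by $(x_n,0)$ with $x_n\to+\infty$, the $g$-area-minimizing/stability compactness gives smooth subsequential convergence, and the limit is a graph over all of $\Omega'$ because $\alpha$ and $\beta$ are arbitrary --- is correct and is essentially the paper's proof.
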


\begin{proof}
Suppose first that $\Omega=\{(x,y) \in \R^2 \, : \, y>c\}$, for some constant $c \in \R.$

For all $\alpha >c$ sufficiently close to $c$ and all $\beta$ sufficiently large,
the set $C$ in Lemma \ref{spruck-xiao-type-lemma} will be contained in $[0,\infty) \times [\alpha, \beta].$

Consequently $|Du|$ is bounded above on $[0,\infty) \times [\alpha, \beta].$ So, any
subsequential limit $M'$ of the indicated kind is a graph whose domain includes $[0,\infty) \times [\alpha, \beta].$
Since $\alpha$ and $\beta$ are arbitrary, then $M'$ is a graph over $\Omega.$

The other cases are similar, but easier.
\end{proof}

\begin{remark}
\label{re:good2} The proof of the previous corollary gives a bit more information
about the limit function $u'$:
\begin{enumerate}[(1)]
\item If $\Omega$ is a halfplane of the form $\{(x,y) \in \R^2 \, : \, y>c\}$, then
 $|Du'|$ is bounded on strips of the form 
$\{(x,y) \in \R^2 \, : \, \alpha \leq y \leq \beta \}$, provided that $\alpha >c.$
\item If $\Omega$ is the plane, or if $\partial \Omega$ consists of a non-horizontal line,
then $\Omega'=\R^2$ and $|Du'|$ is bounded on all horizontal strips $\{(x,y) \in \R^2 \, : \, \alpha \leq y \leq \beta \}.$
\end{enumerate}
\end{remark}

\begin{proposition}\label{Gauss-image-is-hemisphere} 
Suppose that $M$ is a complete, strictly convex translator and that $M$ is a graph of 
$u: \Omega \rightarrow \R$, where $\Omega$ is either all of $\R^2$ or a halfplane. Then the Gauss
image $\nu(M)$ is the entire upper hemisphere $H^+.$
\end{proposition}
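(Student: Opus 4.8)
The plan is to suppose $\nu(M)\neq H^+$ and derive a contradiction, combining Lemma~\ref{Gauss-map-lemma} with the Spruck--Xiao--type gradient estimates of this section.

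\emph{Step 1: a one-sided gradient bound.} If $\nu(M)\neq H^+$, then by Lemma~\ref{Gauss-map-lemma} the set $H^+\cap\partial\nu(M)$ contains a great semicircle $C$, and $\nu(M)$ lies in one of the two components of $H^+\setminus C$. A great semicircle in $H^+$ is half of a great circle, and every great circle meets the equator in an antipodal pair of points; so, after a rotation of $\RR^3$ about a vertical axis, we may assume $C$ has endpoints $\pm\ee_2$. Then $C=H^+\cap\{v:\langle v,p\rangle=0\}$ for some unit vector $p$ in the $\ee_1\ee_3$-plane, and (say) $\nu(M)\subset\{v\in H^+:\langle v,p\rangle>0\}$. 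Since $M$ is strictly convex with upward unit normal, $u$ is concave; writing $\nu=(-Du,1)/\sqrt{1+|Du|^2}$, the inclusion above is equivalent to a one-sided affine bound
\[
  \frac{\partial u}{\partial x}<c_0 \qquad\text{on }\Omega,
\]
for a constant $c_0$. (If $\nu(M)$ lies between two semicircles one gets a two-sided bound on $\partial u/\partial x$, which only makes the argument easier.)

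\emph{Step 2: boundedness of $|Du|$ on an unbounded set.} This is the crux. By concavity, $\partial u/\partial x$ is monotone along $x$-lines, so the bound $\partial u/\partial x<c_0$ makes $\partial u/\partial x$ bounded along, say, the negative $x$-axis; after reflecting $M$ in a vertical plane we may assume this occurs along $[0,\infty)\times\{0\}$. I would then argue that the \emph{full} gradient $|Du|$ is bounded along some closed, connected, unbounded curve $\Gamma\subset\Omega$ contained in a half-strip $[0,\infty)\times[-a,a]$: either $\partial u/\partial y$ is already bounded there, or it is not --- in which case the normal escapes toward $\pm\ee_2$ and, by Corollary~\ref{divergent-sequence-corollary}, translates of $M$ along such a sequence subconverge to the vertical plane $\{y=0\}$; excluding this last possibility (using $\partial u/\partial x<c_0$ and the concavity of $u$ to constrain the geometry near that limit) is the main obstacle. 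Granting it, $u$ satisfies the hypotheses of Lemma~\ref{spruck-xiao-type-lemma}, hence of Corollary~\ref{spruck-xiao-gradient-bound2}.

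\emph{Step 3: the final contradiction.} Suppose first that $\Omega$ is the plane or a half-plane with non-horizontal boundary. By Corollary~\ref{spruck-xiao-gradient-bound2} and Remark~\ref{re:good2}(2), a subsequence of $M-(x_n,0,u(x_n,0))$ with $x_n\to+\infty$ converges smoothly to a complete translator $M'$ that is an entire graph over $\RR^2$ with $|Du'|\le K(\beta)<\infty$ on each horizontal strip $\{|y|\le\beta\}$. Since these bounds are uniform, a further limit of $M'-(x'_n,0,u'(x'_n,0))$ with $x'_n\to+\infty$ is again a complete translator $M''$ that is an entire graph over $\RR^2$. But $M''$ is a subsequential limit of $M'$ translated along a divergent sequence, so by Corollary~\ref{divergent-sequence-corollary} it is a vertical plane, a grim reaper surface, or a tilted grim reaper surface --- none of which is an entire graph over $\RR^2$. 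This contradiction shows $\nu(M)=H^+$ in this case. When $\Omega$ is a half-plane with horizontal boundary the same argument applies with the weaker Remark~\ref{re:good2}(1): the iterated $x$-translation limit is then a complete translator that is a graph over the whole half-plane, which is impossible for a grim reaper or tilted grim reaper (these being graphs over strips of \emph{finite} width) and for a vertical plane.

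Thus the genuinely delicate point is Step~2: turning the one-sided bound $\partial u/\partial x<c_0$ into boundedness of the full gradient on an unbounded set --- equivalently, showing that $M$ cannot become asymptotically a vertical plane in the relevant direction. This is where the hypothesis $\nu(M)\neq H^+$, rather than merely a one-sided gradient bound, must be used in an essential way.
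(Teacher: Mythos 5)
Your Steps 1 and 3 are sound and essentially match the endgame of the paper's argument: once one knows that $|Du|$ is bounded on a closed, connected, unbounded subset of a horizontal half-strip, Lemma~\ref{spruck-xiao-type-lemma}, Corollary~\ref{spruck-xiao-gradient-bound2} and Remark~\ref{re:good2} produce a limit of $x$-translates of $M$ that is a complete graph over a halfplane or over all of $\RR^2$, while Corollary~\ref{divergent-sequence-corollary} (or Lemma~\ref{Gauss-map-lemma}) forces any such divergent limit to be a vertical plane or a (tilted) grim reaper, which is a contradiction; your extra second limit of $M'$ is unnecessary but harmless. The problem is that Step 2, which you yourself identify as the crux and then explicitly grant, is the actual content of the proposition, and your sketch of it does not work as stated: along the $x$-axis the one-sided bound $\partial u/\partial x<c_0$ together with concavity controls only $\partial u/\partial x$, and nothing you say rules out $\partial u/\partial y$ blowing up there, i.e.\ rules out translates converging to the vertical plane $\{y=0\}$. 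So as written the proof has a genuine gap precisely where the difficulty lies.

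The paper fills this gap differently: it does not work along the $x$-axis but along the curve $\Gamma=\{\partial u/\partial y=0\}$, the $\nu$-preimage of the great semicircle of $H^+$ orthogonal to $\ee_2$. Strict convexity makes $\Gamma$ a graph $x\mapsto(x,y(x))$ over an interval, and by Lemma~\ref{Gauss-map-lemma} the translates $M-(x,y(x),u(x,y(x)))$ converge, as $x$ tends to an endpoint of that interval, to a grim reaper over a \emph{horizontal} strip symmetric about $\RR\times\{0\}$; hence $|Du|$ is automatically bounded along $\Gamma$ and $\Gamma$ is asymptotically horizontal. The remaining difficulty--showing that $\Gamma$ stays at bounded height, so that it lies in a half-strip $[0,\infty)\times[-a,a]$--is handled by a maximum argument: for each $t$ one maximizes $\varphi^t(p)=\sqrt{1+|Du(p)|^2}\,\dist(p,L(t))$ over the region $K(t)$ between $\Gamma$ and the line $L(t)$ through the origin and $(t,y(t))$; Theorem~\ref{spruck-xiao-gradient-bound} pushes the maximum onto $\Gamma$, a compactness/contradiction argument (using that the limit along $\Gamma$ is a grim reaper, whose gradient is unbounded on any halfplane) gives a uniform bound $\alpha$ for these maxima, and letting $t\to\infty$ yields $\sup_x|y(x)|\le\alpha$. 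Something playing the role of this mechanism--converting the Gauss-image restriction into boundedness of the full gradient on an unbounded connected set at bounded height--is what your Step 2 still needs before Steps 1 and 3 can be run.
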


\begin{proof}
Suppose not. Then by Lemma \ref{Gauss-map-lemma}, the boundary of $\nu(M)$ contains
a great semicircle $C$ in the upper unit hemisphere. By rotating, we can
assume that the endpoints of the semicircle are $(0,1,0)$ and $(0,-1,0).$
 Let $\Gamma$ be the set of points in $\Omega$ such that $\pdf{u}y = 0$, i.e., the inverse image under $\nu$ of
$\{v \in H^+ \, : \, \langle v , (0,1,0) \rangle = 0\}.$
By Lemma \ref{Gauss-map-lemma}, $\Gamma$ is a smooth, properly embedded curve in $\Omega.$ By
translation, we may assume that $(0,0) \in \Gamma$. Note the strict convexity implies that
$$y \mapsto \pdf{u}y(x,y)$$ is strictly decreasing (for each $x$).
 Thus if $\Gamma$ intersects the line $\{(x, y) \,: \,y \in  \R\}$, it
intersects it in a single point $(x,y(x)).$ That is, we can parametrize $\Gamma$ as
$$\{(x,y(x)) \,: \, x  \in I \}$$
where $I$ is an open interval (possibly infinite) containing 0.
 
By Lemma \ref{Gauss-map-lemma} again, as $x$ tends to one of the endpoints of $I,$ say the right
endpoint, the surfaces $M -(x,y(x),u(x,y(x))$ converge smoothly to a grim reaper
surface $M'$ through $(0,0,0)$ such that $\nu'(M') = C.$ Thus $M'$ is the graph of a function
\[
     u' : \R \times (-a, a) \longrightarrow  \R.
\]
The strip is horizontal because the endpoints of $\nu'(M')$ are $(0,1,0)$ and $(0,-1,0).$
The strip is symmetric about the line $X = \R \times \{0\}$ because
$$u'_y(0,0) = \lim \pdf{u}y(x,y(x)) = 0.$$
It follows that the curves $\Gamma-(x,y(x))$ converge smoothly to $X$ as $x$ tends to the
right endpoint of $I.$ Thus the right endpoint of $I$ is $+\infty.$
 
For $t > 0$, let $L(t)$ be the line through $(0,0)$ and $(t,y(t)),$ let
\[
     \Gamma[0,t] = \{(x, y) \in \Gamma \, : \, 0 \leq x \leq t\} = \{(x, y(x))  \, : \, 0 \leq x \leq t\} 
\]     
and let $K(t)$ be the closed bounded subset of $[0,t]\times \R$ determined by $\Gamma \cup L(t).$
(Thus for each vertical line $V$ in $[0,t]\times \R$, $V \cap K(t)$ is the closed segment whose
endpoints are $V \cap \Gamma$ and $V \cap L(t).$)

Note that as $t \to \infty$ the line $L(t)$ converges to the horizontal line $X $,
and thus $K(t)$ converges to the set
$$K := \{(x,y) \, : \, x \geq 0, \mbox{ and $0 \leq y \leq y(x)$ or $y(x) \leq y \leq 0$} \}.$$
Let $p(t)$ be a point where the function 
\begin{align*}
&\varphi^t: K(t) \rightarrow \R, \\
&\varphi^t(p)=\sqrt{1 + |Du(p)|^2} \, \dist(p,L(t))
\end{align*}
attains its maximum. (Of course $p(t)$ need not depend continuously on $t.$)

Now $p(t)$ cannot be on $L(t)$ since the function $\varphi^t$ vanishes on $L(t).$ By 
Corollary~\ref{spruck-xiao-gradient-bound}, $p(t)$ cannot be in the interior of $K(t).$ Thus
\begin{equation}
p(t)\in  \Gamma[0,t].
\end{equation}
We claim that 
\begin{equation}
\label{eq:alpha}
\alpha:= \sup_{t>0}  \sqrt{1+|Du(p(t))|^2} \dist (p(t) ,L(t)) <\infty
\end{equation}
For suppose to the contrary that we can find a sequence $t_n \nearrow \infty$
such that
\begin{equation}
\label{eq:alphano}
\sqrt{1+|Du(p_n)|^2} \dist (p_n ,L_n) \to  \infty
\end{equation}
where $p_n=p(t_n)$ and $L_n=L(t_n)$.
Since $|Du(p_n)| \to |Du'(0)| <\infty,$ we see from \eqref{eq:alphano} that
\begin{equation}\label{eq:alphano1}
\dist(p_n,L_n) \to \infty.
\end{equation}
Since $\Gamma-p_n$ converges smoothly to the line $X$, \eqref{eq:alphano1}
implies (after passing to a further subsequence) that $K(t_n)-p_n$ 
converges to a halfplane $Q$ bounded by $X$.

Let $q$ be a point in the interior of $Q$. Then, for all sufficiently large
$n$, one has $p_n +q \in K(t_n)$. Thus
\begin{align*}
\sqrt{1+|Du(p_n)|^2} \, \dist(p_n,L_n) 
&\geq \sqrt{1+|Du(p_n+q)|^2} \, \dist(p_n+q,L_n)  \\
&\geq \sqrt{1+|Du(p_n+q)|^2} \, \left(\dist(p_n,L_n)-|q|\right).
\end{align*}
Dividing by $\dist(p_n,L_n)$, which tends to $\infty$ by \eqref{eq:alphano}, 
and letting $n \to \infty$ gives
$$ \sqrt{1+|Du'(0)|^2} \geq \sqrt{1+|Du'(q)|^2}, \quad \forall q \in Q,$$
which is absurd because the graph of $u'$ is a tilted grim reaper. This
contradiction proves \eqref{eq:alpha}.

Now consider a point $p=(x,y(x))$ in $\Gamma \cap\{x \geq 0\}.$ 
Then $p \in K(t)$ for $t \geq x$, so for all $t \geq x$, 
$$ \sqrt{1+|Du(x,y(x))|^2} \dist((x,y(x)),L(t)) \leq \alpha.$$
Letting $t \to \infty$ gives 
$$ |y(x)| \leq \sqrt{1+|Du(x,y(x))|^2} \, |y(x)| \leq \alpha,$$
and therefore $$\sup_{x \geq 0} |y(x)| \leq \alpha.$$
Now let $\widehat M$ the subsequential limit of $M-(x_n,0,u(x_n,0))$
as $x_n \to \infty$. By Corollary \ref{spruck-xiao-gradient-bound2} and Remark \ref{re:good2},
$\widehat M$ is a complete graph defined over a halfplane or over
all of $\R^2$. But by Lemma \ref{Gauss-map-lemma}, since $M$ is convex, 
$\widetilde M$ is either a vertical plane or a tilted grim reaper, a contradiction.
\end{proof}

\begin{theorem} \label{halfplane-theorem}
No complete translator is the graph of a function over a halfplane.
\end{theorem}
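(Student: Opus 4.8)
The plan is to assume, toward a contradiction, that some complete translator $M$ is the graph of a function $u\colon\Omega\to\RR$ over a halfplane $\Omega$, and to rule this out. First I would reduce to the strictly convex case: by the Spruck--Xiao convexity theorem (Theorem~\ref{convexity-theorem}) $M$ is convex, and if its Gauss curvature vanished somewhere then by Theorem~\ref{convexity-theorem} $M$ would be a grim reaper surface or a tilted grim reaper surface --- but those are graphs over strips, not over a halfplane. Hence $M$ is strictly convex, so Proposition~\ref{Gauss-image-is-hemisphere} applies and $\nu(M)=H^+$. After rotating about a vertical axis and translating, I may assume $\Omega=\{(x,y):y>0\}$, so that $\partial\Omega$ is the $x$-axis.

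Next I would analyze $M$ near $\partial\Omega$. If $p_i\in M$ has projections converging to a point of $\partial\Omega$, then $p_i$ is a divergent sequence in $M$, so by Corollary~\ref{divergent-sequence-corollary} a subsequence of $M-p_i$ converges smoothly to a vertical plane or a (tilted) grim reaper surface. The limit cannot be a (tilted) grim reaper surface, since such a surface is a graph over a full neighbourhood of the origin and would force $M$ to be a graph over points with $y<0$. So the limit is a vertical plane through the origin; since it lies in $\{y\ge 0\}=\lim_i\{y\ge -y_i\}$, it must be the plane $\{y=0\}$. Reading this convergence back through the graph structure, one finds $\partial u/\partial y\to+\infty$ as $y\to0^+$ (with $\partial u/\partial x$ staying locally bounded), and, since the translated graphs must exhaust the whole plane $\{y=0\}$ --- whose third coordinate is unbounded below --- in fact $y\,\partial u/\partial y\to+\infty$ as $y\to0^+$. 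Integrating $\partial u/\partial y$ and using that $M$ is complete then forces $u(x,y)\to-\infty$ as $y\to0^+$ for every fixed $x$. On the other hand, because $\nu(M)=H^+$ the quantity $|Du|$ blows up along \emph{every} divergent sequence in $\Omega$; consequently $\sqrt{1+|Du(x,y)|^2}\,\dist((x,y),\partial\Omega)=y\sqrt{1+|Du(x,y)|^2}$ tends to $+\infty$ along every divergent sequence in $M$, while it is finite at the unique interior point where $Du=0$.

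Finally I would extract the contradiction. With $\eta(x,y,z)=y$ --- affine, invariant under vertical translations, and positive on $\Omega$ --- the Spruck--Xiao gradient bound (Theorem~\ref{spruck-xiao-gradient-bound}) says that $\eta\sqrt{1+|Du|^2}$ has no interior local maximum on $M$. The plan is to play this against the near-boundary blow-up and the ``overhang'' $u\to-\infty$, $\partial u/\partial y\to+\infty$ at $\partial\Omega$: concretely, I would compare $M$, on the slab $\RR\times(0,\pi)\subset\Omega$, with the vertical translates $(x,y)\mapsto\log\sin y+t$ of the grim reaper surface over $(0,\pi)$ (whose behaviour as $y\to0^+$ matches that of $u$), and use Rado's theorem (Theorem~\ref{rado-theorem}) together with the Morse-theoretic reasoning behind Proposition~\ref{different-intervals-proposition} to see that no such comparison can hold unless $M$ contains a grim reaper surface, contradicting strict convexity.

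The hard part will be this last step. Unlike in the proof of Proposition~\ref{Gauss-image-is-hemisphere}, one cannot invoke Corollary~\ref{spruck-xiao-gradient-bound2} here, because once $\nu(M)=H^+$ the gradient $|Du|$ is unbounded on every unbounded subset of $\Omega$, so Lemma~\ref{spruck-xiao-type-lemma} has no admissible set $C$. Thus the impossibility has to be wrung directly out of the sharp near-boundary statement $y\sqrt{1+|Du(x,y)|^2}\to+\infty$ as $y\to0^+$, using concavity of $u$ and the translator equation to show that a complete translator simply cannot hang over the edge of its halfplane domain; carrying out that argument cleanly is the real content.
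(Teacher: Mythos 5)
There is a genuine gap. Your opening moves coincide with the paper's: reduce to the strictly convex case, invoke Proposition~\ref{Gauss-image-is-hemisphere} to get $\nu(M)=H^+$, conclude that all limits of translates of $M$ are vertical planes, and hence that $\partial u/\partial y\to+\infty$ along divergent sequences whose $y$-coordinates stay bounded (in particular as $y\to 0^+$). But the step that is supposed to produce the contradiction is not an argument: comparing $M$ over $\RR\times(0,\pi)$ with vertical translates of the grim reaper and appealing to Rado's theorem and the Morse-theoretic count from Proposition~\ref{different-intervals-proposition} requires control of the level sets of the difference function along the ends of the slab, and exactly because $\nu(M)=H^+$ the function $u$ has no grim-reaper-like asymptotics as $|x|\to\infty$ (as you yourself note, Lemma~\ref{spruck-xiao-type-lemma} and Corollary~\ref{spruck-xiao-gradient-bound2} are unavailable here), so the hypotheses of Theorem~\ref{rado-theorem} cannot be verified and it is not clear what contradiction would emerge. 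You acknowledge this (``carrying out that argument cleanly is the real content''), which is precisely the part that is missing. In addition, two of your intermediate assertions are unjustified: from ``$|Du|$ blows up along every divergent sequence'' you cannot conclude that $y\sqrt{1+|Du(x,y)|^2}\to+\infty$ along every divergent sequence (near the boundary $y\to 0$ competes with the gradient blow-up, and the product could remain bounded), and the claim $u(x,y)\to-\infty$ as $y\to 0^+$ is asserted rather than derived.

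For comparison, the paper closes the argument quite differently, with an Alexandrov moving-planes reflection in the $y$-direction rather than a comparison with grim reapers. Since $\nu(M)=H^+$, there is a point where $Du=0$; one considers the set $\Ss$ of triples $(x,y,y')$ with either $0<y=y'$ and $\partial u/\partial y(x,y)=0$, or $0<y<y'$ and $u(x,y)\ge u(x,y')$, and minimizes $(y+y')/2$ over $\Ss$. The blow-up statement $\partial u/\partial y(x_n,y_n)\to\infty$ for \emph{all} divergent sequences with bounded $y_n$ (not only $y_n\to 0$) is what forces a minimizing sequence to have bounded $x_i$ and $y_i$ bounded away from $0$, so the infimum $s$ is attained; then the strong maximum principle (interior or boundary version) upgrades $u(x,y)\le u(x,2s-y)$ to the reflection symmetry $u(x,y)\equiv u(x,2s-y)$ on $0<y\le s$, which is absurd for a graph over a halfplane. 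If you want to complete your proof along your own lines you would have to supply a genuinely new argument for the final step; as written, the proposal establishes the same preliminary structure as the paper but stops short of the decisive contradiction.
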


\begin{proof}
We prove it by contradiction using Alexandrov moving planes.
Suppose there is a complete translator $M$ that is the graph of a function
\[
  u: \{(x,y): y>0\}\to\RR.
\]
By Proposition~\ref{Gauss-image-is-hemisphere}, the Gauss map image $\nu(M)$ is the entire
upper hemisphere.  Thus the only limits of translates of $M$ are vertical planes.

If $p_n=(x_n,y_n,z_n) \in M$ is a divergent sequence with 
\begin{equation}\label{eq:*}
y_n \leq c <\infty,
\end{equation}
then the sequence $M-p_n$ converges (subsequentially) to a vertical plane passing through the origin.
Since the plane is contained in $\{ y \geq -c\}$, it must be the 
the plane $\Pi_0=\{y=0\}.$   
Thus
\begin{equation}\label{oops}
\begin{gathered}
\text{If $(x_n,y_n,z_n)\in M$ diverges and if $y_n$ is bounded,} \\
\text{then $\pdf{u}y(x_n,y_n)\to\infty$.}
\end{gathered}
\end{equation}

In particular, there is an $\eta>0$ such that 
\[
    \text{$\pdf{u}y(x,y)>0$ for $0<\eta<y$}.
\]

Let $\Ss$ be the set of $(x,y,y')$ such that 
\begin{gather*}
  \text{$0<y=y'$ and $\pdf{u}y(x,y)=0$, or} \\
  \text{$0<y<y'$ and $u(x,y)\ge u(x,y')$}.
\end{gather*}

Since $\nu(M)$ is the upper hemisphere, there is a point $(x,y)$ 
with $Du(x,y)=0$.   Thus $(x,y,y)\in \Ss$, so $\Ss$ is nonempty.

Let 
\[
    s = \inf \{ (y+y')/2:  (x,y,y')\in \Ss\}.
\]
We claim that the infimum is attained.  To see this, let $(x_i,y_i,y_i')$ be a sequence in $\Ss$ such that
\[
  (y_i+y_i')/2 \to s.
\]
Note by~\eqref{oops} that $x_i$ is bounded.
It follows (also by~\eqref{oops}) that $y_i$ is bounded away from $0$.  
Thus (after passing to a subsequence) $(x_i,y_i, y_i')$ converges to a limit $(\hat{x},\hat{y},\hat{y}')$ in $\Ss$
with $s=(\hat{y}+\hat{y}')/2$.

Now 
\[
   \text{$u(x,y) \le u(x,2s-y)$ for all $y\in (0,s]$}
\]
with equality at $(\hat{x},\hat{y})$.  By the strong maximum principle (if $\hat{y}<\hat{y}'$) or the
strong boundary maximum principle (if $\hat{y}=\hat{y}'$), 
\[
   \text{$u(x,y) = u(x,2s-y)$ for all $x\in \RR$ and $y\in (0,s]$},
\]
which is clearly impossible.
\end{proof}

\section{The Classification Theorem}

\begin{theorem}
For every $b>\pi/2$, there is (up to translation) a unique complete, strictly convex
translator 
\[
   u^b: \RR \times (-b,b)\to\RR.
\]
Up to isometries of $\RR^2$, the only other complete translating graphs
are the grim reaper surface, the tilted grim reaper surfaces, and the
rotationally symmetric graphical translator (i.e., the bowl soliton).
\end{theorem}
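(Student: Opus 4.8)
The plan is to assemble the results already established and then dispose of the single genuinely new case. First I would observe that a complete translating graph $M\subset\RR^3$, being a graph, has everywhere positive mean curvature $\left<\nu,\ee_3\right>$ (with $\nu$ the upward unit normal), so Theorem~\ref{convexity-theorem} applies: either the Gauss curvature of $M$ vanishes somewhere, in which case $M$ is a grim reaper surface or a tilted grim reaper surface and we are done, or $M$ is strictly convex. The first sentence of the theorem is then nothing new: existence and strict convexity of $u^b$ for $b>\pi/2$ are Theorem~\ref{existence-theorem} (whose part (b) gives positive Gauss curvature), and uniqueness up to translation among complete translators over $\RR\times(-b,b)$ is Theorem~\ref{unique-Delta-wings-theorem}, since a tilted grim reaper surface has vanishing Gauss curvature and so is never strictly convex.

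So it remains to classify strictly convex $M$, say the graph of $u\colon\Omega\to\RR$. By Shahriyari's theorem~\cite{shari}, up to an isometry of $\RR^2$ the domain $\Omega$ is $\RR^2$, a halfplane, or a strip $\RR\times(-b,b)$ with $b\ge\pi/2$. The case $\Omega=\RR^2$ gives the bowl soliton by Wang~\cite{wang}; the halfplane case is impossible by Theorem~\ref{halfplane-theorem} (or by~\cite{wang}); and for $\Omega=\RR\times(-b,b)$ with $b>\pi/2$, Theorem~\ref{unique-Delta-wings-theorem} identifies $M$, up to translation, as either a tilted grim reaper surface --- excluded by strict convexity --- or the $\Delta$-wing $u^b$. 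Hence everything is settled except the strip of width exactly $\pi$, and ruling out a strictly convex example over it is, I expect, the main obstacle: $b=\pi/2$ is admissible by Shahriyari's theorem but sits just outside the range $b>\pi/2$ covered by Theorems~\ref{existence-theorem} and~\ref{unique-Delta-wings-theorem}.

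To handle $\Omega=\RR\times(-\pi/2,\pi/2)$, suppose $M$ is the graph of a strictly convex $u$, normalized by Corollary~\ref{spruck-xiao-strip-corollary} so that $\max u=u(0,0)=0$ and $Du(0,0)=0$. For $c>\pi/2$ let $u^c$ be the symmetric $\Delta$-wing of Theorem~\ref{symmetric-uniqueness} with $u^c(0,0)=0$ (so $Du^c(0,0)=0$ and $u^c$ has positive Gauss curvature); Lemma~\ref{spine-comparison-lemma}, applied with $a=\pi/2<b=c$, then gives $u(x,0)>u^c(x,0)$ for every $x\ne 0$. The key step is to show that $u^c\to\grim$ smoothly as $c\to\pi/2^+$. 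For this I would combine: (i) the Spruck--Xiao asymptotics (Theorem~\ref{spruck-xiao-strip-theorem}), which present the graph of $u^c$ as asymptotic along its two ends to the tilted grim reapers $\grim_{\pm\theta_c}$ with $\theta_c=\arccos(\pi/(2c))\to 0$; (ii) the Gauss map lemma (Lemma~\ref{Gauss-map-lemma}), which then forces the Gauss image of the graph of $u^c$ to be exactly the region between the two (disjoint) great semicircles attached to $\grim_{\pm\theta_c}$, a region that collapses onto the single great semicircle $\{(0,\sin y,\cos y):|y|<\pi/2\}$ as $\theta_c\to 0$; and (iii) the uniform gradient bound of Theorem~\ref{existence-theorem}\eqref{gradient-bound-item}, which (there is no thinning, since $(c-|y|)\sqrt{1+|Du^c|^2}$ is bounded) gives smooth subsequential convergence $u^c\to w$ with $w$ a translating graph over $\RR\times(-\pi/2,\pi/2)$, symmetric in $x$ and in $y$, with $w(0,0)=0$. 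From (i)--(ii) the limit $w$ has zero Gauss curvature everywhere, so by the ruled-surface argument in the proof of Theorem~\ref{convexity-theorem}, together with the two symmetries (which kill the $x\tan\theta$ term of a tilted reaper), $w=\grim$; since the limit does not depend on the sequence, $u^c\to\grim$. Then $u(x,0)\ge\grim(x,0)=0$ while $u(x,0)\le u(0,0)=0$, so $u(x,0)\equiv 0\equiv\grim(x,0)$; as $\partial u/\partial y(x,0)=0=\partial\grim/\partial y(x,0)$ by the $y\mapsto -y$ symmetry, Cauchy--Kowalevski forces $u=\grim$, contradicting strict convexity. This shows the grim reaper surface is the only complete translator over the width-$\pi$ strip, which completes the classification.
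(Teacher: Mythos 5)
Your proposal is correct and follows the same decomposition as the paper: reduce to the strictly convex case via Theorem~\ref{convexity-theorem}, use Shahriyari's result \cite{shari} to restrict the domain to a plane, halfplane, or strip, quote Wang \cite{wang} for the entire case, Theorem~\ref{halfplane-theorem} for halfplanes, and Theorem~\ref{unique-Delta-wings-theorem} for strips of width greater than $\pi$. The one genuine difference is your explicit treatment of the strip of width exactly $\pi$, a case that the paper's written proof passes over silently (Theorem~\ref{unique-Delta-wings-theorem} assumes $b>\pi/2$), and your argument for it is sound: Lemma~\ref{spine-comparison-lemma} applies since both your hypothetical $u$ and the wings $u^c$ are strictly convex, the gradient bound of Theorem~\ref{existence-theorem}(c) prevents thinning as $c\to\pi/2^+$, and the collapse of the Gauss-image lune (Lemma~\ref{Gauss-map-lemma} plus the Spruck--Xiao asymptotics) forces the limit to be independent of $x$, hence the grim reaper, giving $u(x,0)\equiv 0$ and a contradiction with strict convexity (indeed $u_{xx}(x,0)\equiv 0$ suffices, without invoking Cauchy--Kowalevski). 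Worth noting: this case can be dispatched more cheaply directly from Theorem~\ref{spruck-xiao-strip-theorem}. For $b=\pi/2$ the tilt angle is $\theta=0$, so the smooth convergence at the two ends gives
\[
  \lim_{x\to-\infty}\frac{\partial u}{\partial x}(x,0)=\lim_{x\to+\infty}\frac{\partial u}{\partial x}(x,0)=0,
\]
while strict convexity makes $x\mapsto \frac{\partial u}{\partial x}(x,0)$ strictly decreasing; a strictly decreasing function cannot have the same finite limit at both ends, so no strictly convex complete translator over the width-$\pi$ strip exists. Either way, your write-up is complete and, on this point, more thorough than the paper's.
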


\begin{proof}
Let $u:\Omega\to \RR$ be a complete translator that is not
a grim reaper surface or tilted grim reaper surface.
By Theorem~\ref{convexity-theorem},  its graph is strictly convex.

By~\cite{shari}, $\Omega$ is a strip, a halfplane, or all of $\RR^2$.
Theorem~\ref{unique-Delta-wings-theorem} gives existence and uniqueness (up to rigid motion) of 
complete, strictly convex $u^b:\RR\times (-b,b)\to\RR$.

By Theorem~\ref{halfplane-theorem}, $\Omega$ cannot be a halfplane.

It remains only to consider the case when $\Omega=\RR^2$.
X. J. Wang~\cite{wang} showed that (up to translation) the only
entire, convex translator is the bowl soliton.
\end{proof}

%%%%
%%%%
%%%%
%%%%
%%%%

\newcommand{\rr}{\mathbf{r}}
\newcommand{\kk}{\mathbf{k}}

\newcommand{\uu}{\mathbf{u}}
\renewcommand{\vv}{\mathbf{v}}

\section{Higher Dimensional $\Delta$-Wings with Prescribed Principal 
Curvatures at the Apex}\label{higher-delta-wings}

In this section, we prove the following theorem:

\begin{theorem}\label{high-dimensional-delta-wings}
Let $k_1,\dots,k_n$ be nonnegative numbers whose sum is $1$.   Then there
is an open subset $\Omega$ of $\RR^n$ and a complete, properly embedded 
translator $u: \Omega\to \RR$
with the following
properties:
\begin{enumerate}[\upshape (1)]
\item\label{max-point-item} $\max u = u(0)=0$.
\item\label{principal-curvatures-item} $D^2u(0)$ is a diagonal matrix whose $ii$ entry is $(-k_i)$ for each $i$.
\item\label{even-function-item} $u$ is an even function of each of its coordinates.
\item\label{alexandrov-item} If $k_i=0$, then $u$ is translation-invariant in the $\ee_i$ direction. 
   If $k_i>0$, then $D_iu(x)$ and $x_i$ have opposite signs wherever $x_i\ne 0$.
\item\label{axis-rotation-item} 
If $k_i=k_j$, then $u$ is rotationally invariant about the plane $\{x_i=x_j=0\}$.
\begin{comment}
\item\label{rotation-item} More generally, let $\kappa\ge 0$, let
\[
  I = \{i: k_i\le \kappa\} \quad\text{and}\quad J = \{j: k_j>\kappa\}.
\]
If $\uu$ and $\vv$ are unit vectors in the span of $\{\ee_i:i\in I\}$
and $\{\ee_j: j\in J\}$, respectively, then
\[
     \nabla_{(x\cdot \vv)\uu - (x\cdot \uu)\vv} u >0\quad\text{wherever $(x\cdot \vv)(x\cdot\uu)>0$.}
\]
\end{comment}
\item\label{domain-item} The domain of $u$ is 
either all of $\RR^n$ or a slab of the form $\{x: -b<x_i<b\}$ for some $i$.
  In the latter case, $k_i>k_j$ for all $j\ne i$.
\end{enumerate}
\end{theorem}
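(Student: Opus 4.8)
The plan is to extend the construction of Sections~\ref{rectangles-section}--\ref{symmetric-delta-wings-section} from rectangles to boxes. \emph{Box translators.} For $\mathbf L=(L_1,\dots,L_n)$ with all $L_i>0$, let $u_{\mathbf L}$ be the translator on the box $\prod_{i=1}^{n}[-L_i,L_i]$ with boundary values $0$; existence and uniqueness follow exactly as in Section~\ref{rectangles-section} (continuity method, with the bowl soliton as a barrier from above and $0$ from below, and the $g$-minimal foliation for uniqueness). As in Proposition~\ref{rectangles-proposition}: $u_{\mathbf L}>0$ in the interior; $u_{\mathbf L}$ is $C^1$ up to the boundary (blow-ups along the lower dimensional faces are flat); and Alexandrov moving planes show that $u_{\mathbf L}$ is even in each variable, that $D_iu_{\mathbf L}$ and $x_i$ have opposite signs, and that $u_{\mathbf L}$ is rotationally invariant in the $(x_i,x_j)$-plane whenever $L_i=L_j$. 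By evenness, $Du_{\mathbf L}(0)=0$ and $D^2u_{\mathbf L}(0)=-\operatorname{diag}(\kappa_1(\mathbf L),\dots,\kappa_n(\mathbf L))$; evaluating~\eqref{translator-equation} at $0$ gives $\sum_i\kappa_i(\mathbf L)=1$, and the Hopf boundary lemma applied to $D_iu_{\mathbf L}$ (which is $\le0$ on $\{x_i\ge0\}$, vanishes on $\{x_i=0\}$, and is not identically $0$ --- otherwise $u_{\mathbf L}$, being independent of $x_i$, could not vanish on the face $\{x_i=L_i\}$) gives $\kappa_i(\mathbf L)>0$. Uniqueness plus smooth compactness make $\mathbf L\mapsto\kappa(\mathbf L)$ a continuous map from $(0,\infty)^n$ into the open simplex $\{\mathbf k:k_i>0,\ \sum_ik_i=1\}$. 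Finally $u_{\mathbf L}$ decreases as any $|x_i|$ increases, so $u_{\mathbf L}(0)=\max u_{\mathbf L}$.

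\emph{Limits.} I would then prove a gradient estimate modeled on Proposition~\ref{rectangle-gradient-bound}, using Theorem~\ref{spruck-xiao-gradient-bound} together with the face-slope monotonicity generalizing Proposition~\ref{rectangles-proposition}\eqref{edge-slope-item} (on $\{x_1=L_1\}$ one has $|Du_{\mathbf L}|=-D_1u_{\mathbf L}$ and $D_jD_1u_{\mathbf L}\le0$ for $x_j>0$): namely $\dist(x,\partial(\prod_i[-L_i,L_i]))\,\sqrt{1+|Du_{\mathbf L}|^2}$ is locally bounded, uniformly in $\mathbf L$. Hence, if $L_1,\dots,L_{n-1}\to\infty$ and $L_n=b>\pi/2$ is fixed, $u_{\mathbf L}-u_{\mathbf L}(0)$ subconverges smoothly, without thinning, to a complete, properly embedded translator over the slab $\RR^{n-1}\times(-b,b)$ (completeness and properness because $u_{\mathbf L}(0)\to\infty$, as in Theorem~\ref{existence-theorem}); and if all $L_i\to\infty$ it subconverges to a complete translator over $\RR^n$. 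That $u_{\mathbf L}(0)\to\infty$ in the first case follows as in Corollary~\ref{height-bound-corollary}: were $\liminf u_{\mathbf L}(0)$ finite, the limit would be a \emph{bounded} translator over $(-b,b)\times\RR^{n-1}$ vanishing on the boundary, hence by Theorem~\ref{long-domain-theorem} independent of the last $n-1$ variables, i.e.\ a one-dimensional grim reaper over $(-b,b)$, forcing $b<\pi/2$. All symmetries of $u_{\mathbf L}$ descend to these limits.

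\emph{Realizing a prescribed $\mathbf k$ --- the main obstacle.} Reorder so $k_1\le\cdots\le k_n$. If $k_1=0$, induction on $n$ handles it: the $(n-1)$-dimensional solution for $(k_2,\dots,k_n)$, read as independent of $x_1$, satisfies~\eqref{max-point-item}--\eqref{domain-item}. So assume all $k_i>0$, and run a topological degree argument on the $(n-1)$-parameter families of limit translators from the previous step --- in the slab case parametrized by the width $b$ of the bounded direction and the direction along which $(L_1,\dots,L_{n-1})\to\infty$, in the entire-graph case by that direction alone --- mapping each translator to its apex curvature vector in the open simplex. The degree is well defined once the boundary behaviour of this map is identified: as $b\to\pi/2$ the slab translator converges to a grim reaper curve times $\RR^{n-1}$, so the apex curvature tends to $\ee_n$; as $b\to\infty$ it converges to an entire graph; and when two of the $L_i$ become comparable one gets a translator with fewer unbounded directions. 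Each degenerate limit is recognized via Theorem~\ref{convexity-theorem}, Corollary~\ref{divergent-sequence-corollary}, Theorem~\ref{spruck-xiao-strip-theorem}, and the inductive hypothesis, and the resulting degree is nonzero --- the entire-graph family covering a neighbourhood of $(1/n,\dots,1/n)$ (the bowl soliton) and the slab families covering neighbourhoods of the faces --- so every $\mathbf k$ with all $k_i>0$ is attained. The case $n=2$ (Theorems~\ref{existence-theorem} and~\ref{symmetric-uniqueness}) is the one-variable, intermediate-value instance of this; the difficulty in general is that one must simultaneously guarantee that \emph{every} translator in these families exists, is complete, does not thin, and retains its symmetries, so that the apex-curvature map is genuinely defined and its degree computable.

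\emph{Verifying the properties.} Items~\eqref{even-function-item}, \eqref{alexandrov-item}, and~\eqref{axis-rotation-item} pass to the limit from the box translators, and~\eqref{principal-curvatures-item} holds by construction after normalizing $u(0)=0$ --- which is~\eqref{max-point-item}, since by~\eqref{alexandrov-item} the function $u$ does not increase as one moves from $0$ in any coordinate direction (and is constant in the directions with $k_i=0$). For~\eqref{domain-item}, the domain is $\RR^n$ or a single slab $\{-b<x_i<b\}$ by construction; in the slab case $k_i=k_j$ for some $j\ne i$ is impossible, as~\eqref{axis-rotation-item} would then force $u$ to be rotationally invariant in the $(x_i,x_j)$-plane while its domain is bounded in $x_i$ but not in $x_j$. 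Hence $k_i>k_j$ for all $j\ne i$.
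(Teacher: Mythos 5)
Your proposal has a genuine gap at its central step, and you have in effect said so yourself. The realization of a prescribed curvature vector $\mathbf{k}$ is attempted by a degree argument on families of \emph{complete} translators over slabs or $\RR^n$, obtained as subsequential limits of box solutions and parametrized by the width $b$ and the ``direction of divergence'' of $(L_1,\dots,L_{n-1})$. But you never show that this apex-curvature map is well defined (different subsequences could a priori give different limits --- uniqueness of higher-dimensional $\Delta$-wings is not known, and the paper does not prove it either except in the rotationally symmetric slab case), nor that it is continuous in your parameters, nor do you rigorously identify its boundary behaviour; you label this ``the main obstacle'' without resolving it, so surjectivity onto the open simplex is not established. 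The paper avoids the issue entirely by running the topology at the level of the \emph{compact} approximating problems: bounded translators over ellipsoidal domains $\Ee(\mathbf{a},\lambda)$ normalized by $u(0)=\lambda$, giving a map $F^\lambda_n:\Delta_n\to\Delta_n$, $\mathbf{a}\mapsto\mathbf{k}$, which is continuous (uniqueness on a fixed compact domain plus compactness) and maps each face of the simplex into itself (Theorem~\ref{long-domain-theorem} plus Theorem~\ref{order-preserving-theorem}), hence is surjective by elementary topology. One then fixes $\mathbf{k}$, chooses $\mathbf{a}(\lambda)$ with $F^\lambda_n(\mathbf{a}(\lambda))=\mathbf{k}$, and lets $\lambda\to\infty$, so every approximator --- and therefore the smooth limit --- has exactly the prescribed Hessian at $0$. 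Your box solutions do have the uniqueness needed for a map $\mathbf{L}\mapsto\mathbf{k}$ at the compact level, but you pass to noncompact limits \emph{before} doing the topology, which is where the argument is not carried out.

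The choice of boxes instead of ellipsoids also breaks two ingredients the ellipsoids are there to provide. First, a box with $L_i=L_j$ is invariant only under the dihedral symmetries of its square cross-section, not under all rotations about $\{x_i=x_j=0\}$; so your claim that $u_{\mathbf{L}}$ is rotationally invariant whenever $L_i=L_j$ is false, and property~\eqref{axis-rotation-item} does not pass to the limit from your approximators (the paper gets it because an ellipsoid with $a_i=a_j$ is genuinely rotationally invariant, so by uniqueness each approximator is). Second, the quantitative control underlying the face-preservation and ordering is Theorem~\ref{order-preserving-theorem} ($a_1>a_2\Rightarrow k_1>k_2$), proved via the rotational Jacobi field $f=(x_1D_2-x_2D_1)u$, whose sign on $\partial\Ee\cap\{x_1>0,\,x_2>0\}$ is dictated by $a_1>a_2$; on a box the same field has no definite sign on the corresponding boundary portion (on the face $x_1=L_1$ one gets $f=-x_2D_1u\ge 0$, while on the face $x_2=L_2$ one gets $f=x_1D_2u\le 0$), so no analogue of the ordering $L_i\lessgtr L_j\Rightarrow k_i\lessgtr k_j$ is available by this method. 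Without these two points, both property~\eqref{axis-rotation-item} and any computable structure of the map $\mathbf{L}\mapsto\mathbf{k}$ are lost, so the box route would need substantial new ideas to be completed.
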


\begin{corollary}
If $k_1= k_2 \ge k_3 \ge \dots k_n$, then the function $u$ given by Theorem~\ref{high-dimensional-delta-wings}
is entire.  In particular, there is an $(n-2)$-parameter family of entire translators $u:\RR^n\to \RR$,
no two of which are congruent to each other.
\end{corollary}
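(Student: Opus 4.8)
The plan is to read both assertions off Theorem~\ref{high-dimensional-delta-wings}; the only real work is to produce an isometry invariant that separates the members of the family.

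First I would note that $u$ is entire. Since $k_1=k_2$, there is no index $i$ with $k_i>k_j$ for all $j\ne i$: if $i\in\{1,2\}$ this fails for the other of $1,2$, and if $i\ge 3$ then $k_i\le k_2=k_1$. Hence, by Theorem~\ref{high-dimensional-delta-wings}\eqref{domain-item}, the domain of $u$ is not a slab, so it is all of $\RR^n$.

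Next I would set up the family. Let $P$ be the set of tuples $(k_1,\dots,k_n)$ with $k_1=k_2\ge k_3\ge\cdots\ge k_n\ge 0$ and $\sum_i k_i=1$. Imposing $k_2=k_1$ and eliminating via $2k_1+k_3+\cdots+k_n=1$ realizes $P$ as an $(n-2)$-dimensional polytope with nonempty relative interior (for instance, take $k_1=k_2>k_3>\cdots>k_n>0$). For each $k\in P$ choose, using the theorem, an entire translator $u_k$ with $\max u_k=u_k(0)=0$ and $D^2u_k(0)=\operatorname{diag}(-k_1,\dots,-k_n)$. It then remains to show that $u_k$ and $u_{k'}$ are not congruent when $k\ne k'$; granting that, $k\mapsto u_k$ is the asserted $(n-2)$-parameter family of mutually non-congruent entire translators.

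The heart of the argument --- and the step I expect to be the main obstacle --- is to exhibit such an invariant. I would first check that an isometry $\phi(x)=Ax+b$ of $\RR^{n+1}$ carrying one translating graph onto another must satisfy $A\ee_{n+1}=\ee_{n+1}$: applying $\phi$ to the translator equation $\overrightarrow{H}=-\ee_{n+1}^\perp$ turns it into $\overrightarrow{H}=-(A\ee_{n+1})^\perp$ on the image, so comparison with the translator equation of the image shows $w:=A\ee_{n+1}-\ee_{n+1}$ is everywhere tangent to the image; thus the image is invariant under the translations $\{sw:s\in\RR\}$, and since it is a graph bounded above (property~\eqref{max-point-item} applied to the target function) the vertical component of $w$ must vanish, whence $|A\ee_{n+1}|=1$ forces $w=0$. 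Therefore $A=\operatorname{diag}(O,1)$ with $O\in O(n)$, so $\phi$ carries the locus where the tangent hyperplane is horizontal onto the corresponding locus of the image and conjugates the second fundamental form along it. For $u_k$ that locus is $\{Du_k=0\}$, which by Theorem~\ref{high-dimensional-delta-wings}\eqref{alexandrov-item} equals the coordinate subspace spanned by $\{\ee_i:k_i=0\}$, and by \eqref{principal-curvatures-item} together with the translation invariance in \eqref{alexandrov-item} the second fundamental form is constant there and equal to $\operatorname{diag}(-k_1,\dots,-k_n)$, with spectrum the multiset $\{-k_1,\dots,-k_n\}$. Hence this multiset is a congruence invariant of $u_k$; since $(k_1,\dots,k_n)$ is ordered with $k_1=k_2\ge k_3\ge\cdots\ge k_n$, it is recovered from the multiset, so $u_k\cong u_{k'}$ implies $k=k'$. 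The remaining points (the precise description of $\{Du_k=0\}$ and constancy of the second fundamental form there) follow at once from the cited properties.
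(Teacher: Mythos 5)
Your proof is correct and follows the route the paper treats as immediate: entireness is exactly the dichotomy of Theorem~\ref{high-dimensional-delta-wings}\eqref{domain-item}, since $k_1=k_2$ rules out the slab alternative, and non-congruence is read off from the principal-curvature data where the tangent plane is horizontal. The paper states the corollary without proof, so your verification that an isometry between two such translating graphs must fix $\ee_{n+1}$, and your use of the full critical locus $\{Du=0\}$ (which correctly handles the case of some $k_i=0$, where the maximum is attained along a subspace rather than at a single point), simply supplies the details the paper leaves implicit rather than taking a different approach.
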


This is in sharp contrast to the situation in $\RR^2$: by the work of S.~J.~Wang and Spruck-Xiao,
the bowl soliton is the only entire translator $u:\RR^2\to\RR$.

\newcommand{\Ee}{\mathcal{E}}
\renewcommand{\aa}{\mathbf{a}}

\begin{proof}[Proof of Theorem~\ref{high-dimensional-delta-wings}]
Let $\Delta_n$ be the set of $n$-tuples $\aa=(a_1, a_2, \dots, a_n)$ of nonnegative
numbers such that $\sum a_i=1$.
Given $\aa=(a_1, \dots, a_n)$ in $\Delta_n$ and $\lambda>0$, consider the ellipsoidal region
\begin{equation}\label{n-ellipsoid}
  \Ee(\aa,\lambda) = \left\{x:  \sum_{i=1}^n a_i x_i^2 \le R^2 \right\},
\end{equation}
where $R>0$ is chosen so that if
\[
   u=u_{\aa,\lambda}:\Ee(\aa,\lambda)\to \RR
\]
is the bounded translator with boundary values $0$, then
\[
   u(0) = \lambda.
\]

Let $\kk = (k_1, \dots, k_n)$ be the principle curvatures of the graph of $u$ at the the maximum.
Thus $D^2u(0)$ is a diagonal matrix whose diagonal entries are $-k_1, \dots, -k_n$.
By Theorem~\ref{order-preserving-theorem} below,
 if $k_i=k_j$, then $a_i=a_j$, and thus $u$ is rotationally invariant about the $(n-2)$-dimensional
axis $\{x_i=x_j=0\}$:
\begin{equation}\label{curvature-equality-implies-symmetry}
   \text{$k_i=k_j$ implies $u_{\aa,\lambda}$ is rotationally symmetric about $\{x_i=x_j=0\}$}.
\end{equation}

Let 
\[
   F = F_n^\lambda: \Delta_n \to \Delta_n
\]
be the map that maps $\aa$ to $\kk$.

According to Theorem~\ref{long-domain-theorem}, 
if $a_i=0$, then $u$ is translation-invariant
in the $\ee_i$-direction, and thus $k_i=0$.
It follows that $F^\lambda_n$ maps each face of $\Delta$ to itself.
We can also conclude that $F^\lambda_n$ restricted to an $(m-1)$-dimensional face of $\Delta_n$ agrees
with $F^\lambda_m$.   Thus, for example,
\[
    F^\lambda_n(x_1,\dots, x_{n-1},0) = F^\lambda_{n-1}(x_1,\dots, x_{n-1})
\]
and
\[
    F^\lambda_n(x_1,\dots, x_{i-1},0,x_{i+1}, \dots x_n) = F^\lambda_{n-1}(x_1,\dots,x_{i-1},  x_{i+1}, \dots, x_n).
\]

Consequently, by elementary topology, $F^\lambda_n:\Delta_n\to\Delta_n$ is surjective.

Now fix a $\kk$ in $\Delta_n$.
For each $\lambda>0$, choose an $\aa=\aa(\lambda)$ in $\Delta_n$ such that $F^\lambda_n(\aa)=\kk$.

Now take a subsequential limit of 
\begin{equation}\label{approximators}
    u_{\aa(\lambda),\lambda}-u_{\aa(\lambda), \lambda}(0)
\end{equation}
 as $\lambda\to\infty$.

The result is a complete, properly embedded translator $u:\Omega \to \RR$.
Clearly $\max u=u(0)=0$, the Hessian $D^2u(0)$ has the specified form, and $u$ is an even
function of each coordinate (since the approximating functions~\eqref{approximators}
have that property.)

As already mentioned, if $a_i=0$, then $u_{\aa(\lambda),\lambda}$ is translation-invariant in the $x_i$-direction
and thus $D_iu_{\aa(\lambda),\lambda}\equiv 0$.
On the other hand, if $a_i>0$, then $D_iu_{\aa(\lambda)}>0$ wherever $x_i<0$ by the Alexandrov moving planes argument.
Thus, either way, we have
\[
   D_i u_{\aa(\lambda)} \ge 0\quad\text{wherever $x_i<0$}.
\]
Passing to the limit, we have
\begin{equation}\label{weak-alexandrov}
  D_iu \ge 0\quad\text{wherever $x_i<0$}.
\end{equation}
By differentiating the translator equation~\eqref{translator-equation} with respect to $x_i$, we see that $v=D_iu$
satisfies an linear elliptic PDE of the form
\[
  a_{jk}D_{jk}v + b_jD_jv = 0.
\]
Hence by~\eqref{weak-alexandrov} and the strong maximum principle, either $v\equiv 0$, in which case
$u$ is translation-invariant in the $\ee_i$-direction and so $k_i=0$, or else
\[
  D_iu (= v) >0 \quad\text{wherever $x_i<0$}.
\]
In the latter case, $D_iv(0)<0$ by the Hopf Boundary Point Lemma.
That, $k_i\ne 0$.  This completes the proof of Assertion~\eqref{alexandrov-item}.

Assertion~\eqref{axis-rotation-item} follows immediately from~\eqref{curvature-equality-implies-symmetry}.

To prove Assertion~\eqref{domain-item},
we may suppose (by relabelling the variables) that $k_1$ is the largest
of the principal curvatures:
\[
  k_1 \ge k_i \quad\text{for all $i$}.
\]

{\bf Case 1}: The entire $x_1$-axis lies in $\Omega$.
By Theorem~\ref{low-point-theorem} below,
\[
     u(\rho,0,\dots,0) = \min \{ u(x): |x|=\rho\}
\]
for each $\rho\ge 0$.  Thus $u$ is entire.

\newcommand{\bb}{\mathbf{b}}

{\bf Case 2}:
$\Omega$ does not contain the entire $x_1$-axis.  Then $\partial \Omega$ contains 
a point $\bb=(b,0, \dots,0)$.  By symmetry, we can assume that $b>0$.

Note by Assertion~\eqref{alexandrov-item} that if $x$ is in the domain of $u$, then so
is its projection to each coordinate hyperplane and therefore (iterating) so is its projection
to each coordinate axis.
Thus $\Omega$ lies in the region $\{x: x_1\le b\}$.
Now $(\partial \Omega)\times\RR$ is a minimal variety with respect to the Ilmanen metric
since it is the limit of translators.  Hence $\partial \Omega$ is a minimal variety with respect
to the Euclidean metric.
It lies on one side of the plane $\{x: x_1=b\}$ and touches it at $\bb$.
Hence by the maximum principle, $\partial \Omega$ contains all of that plane.
By symmetry, $\partial \Omega$ also contains the plane $\{x: x_1=(-b)\}$.
It follows that $\Omega=\{x: |x_1|<b\}$.

(If the last sentence is not clear, note 
that if $L$ is a line parallel to a coordinate axis, then $L\cap\Omega$ is connected
by Assertion~\eqref{alexandrov-item}.)

Note that $k_1>k_i$ for each $i\in \{2,3,\dots,n\}$.  For if $k_1=k_i$, then $u$
and therefore its domain would be rotationally symmetric about $\{x_1=x_i=0\}$,
and the slab $\{x: |x_1|<b\}$ does not have that symmetry.
\end{proof}

\section{Translating Graphs over Ellipsoidal Domains}

In this section, we prove the properties of translating graphs over
ellipsoids that were used in the proof of Theorem~\ref{high-dimensional-delta-wings}.
Throughout this section, 
\[
   u: \Ee\to\RR
\]
is a bounded translator with boundary values $0$, where
\[
  \Ee = \left\{x\in \RR^n: \sum a_i x_i^2 \le R^2\right\}.
\]
The $a_i$'s are nonnegative and not all $0$, and $R>0$.
We let $k_1, k_2,\dots,k_n$ be the principal curvatures at $x=0$, so that
$D^2u(0)$ is the diagonal matrix with diagonal entries $-k_1, -k_2,\dots, -k_n$.

We may assume that $a_i>0$ for each $i$, 
since if any $a_i=0$, then $u$ is translation-invariant
in that direction, and thus $u$ is given by a lower-dimensional example.

\begin{theorem}\label{same-sign-theorem}
Suppose $a_1>a_2$. 
Then
$
   (x_1D_2-x_2D_1) u
$
and $x_1x_2$ have the same sign at all points where $x_1x_2\ne 0$.
\end{theorem}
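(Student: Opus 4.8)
The plan is to exploit the rotational symmetry of the ellipsoidal domain together with the Alexandrov moving-plane technique, applied not to Euclidean reflections but to the rotations (and reflections) that preserve the domain $\Ee$. The key observation is that the operator $x_1 D_2 - x_2 D_1$ is, up to sign, the generator of rotations in the $x_1 x_2$-plane, so its sign encodes whether $u$ is increasing or decreasing as one rotates toward the $x_2$-axis. Since $a_1 > a_2$, the ellipsoid $\Ee$ is \emph{shorter} in the $x_1$-direction than in the $x_2$-direction, and this asymmetry should force a definite sign.

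First I would reduce to the slice $\{x_3 = \dots = x_n = 0\}$: since the function $v := (x_1 D_2 - x_2 D_1)u$ and the product $x_1 x_2$ depend on the first two variables in the relevant way, and since differentiating the translator equation~\eqref{translator-equation} in the angular direction yields a linear elliptic equation of the form $a_{jk} D_{jk} v + b_j D_j v = 0$ for $v$ (the translator equation is rotationally covariant in appropriate coordinates, so the rotation field applied to $u$ again solves the linearized equation), it suffices to control the sign of $v$ by a maximum-principle argument on the two-dimensional elliptical disk $E = \{a_1 x_1^2 + a_2 x_2^2 \le R^2\}$. On the boundary $\partial E$, $u \equiv 0$, so $Du$ is normal to $\partial E$, i.e. parallel to $(a_1 x_1, a_2 x_2)$; hence on $\partial E$ we have $v = x_1 D_2 u - x_2 D_1 u$ proportional to $x_1 (a_2 x_2) - x_2 (a_1 x_1) = (a_2 - a_1) x_1 x_2$, which has the sign of $-x_1 x_2$ since $a_1 > a_2$. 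Wait — that is the \emph{opposite} sign from what is claimed; so the correct route is instead to look at where $v$ and $x_1 x_2$ vanish and run a moving-plane / continuity argument rather than a bare boundary comparison. The set $\{x_1 x_2 = 0\}$ is the union of the two coordinate axes inside $E$; on the $x_1$-axis, $D_2 u = 0$ and $x_2 = 0$, so $v = 0$ there, and similarly $v = 0$ on the $x_2$-axis. Thus $v$ vanishes on $\{x_1 x_2 = 0\} \cap E$, and $E \setminus \{x_1 x_2 = 0\}$ has four open quadrant-pieces; by the $(x_i \mapsto -x_i)$ symmetries of $u$, $v$ has the same sign on the two pieces where $x_1 x_2 > 0$ and the opposite sign on the two where $x_1 x_2 < 0$. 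So the whole problem reduces to: show $v$ does not change sign inside one quadrant-piece, and determine that sign.

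For the non-vanishing inside a quadrant, I would compare $u$ on $E$ with its image $\tilde u$ under the reflection across a line through the origin — but the only linear reflections preserving $E$ are the coordinate reflections, which give nothing new, so instead one uses the \emph{affine} map $T_s$ that rescales coordinates to turn $E$ into a round disk, reflects the round disk across a rotated diameter, and rescales back; because $a_1 > a_2$, this composite maps $E$ strictly \emph{inside} itself when the reflection line is chosen appropriately (tilted off the axes), and $\tilde u := u \circ T_s$ is a translator with respect to the push-forward of the Ilmanen metric, which is a smooth metric whose vertical translates are still minimal. Then Rado-type / strong-maximum-principle reasoning (in the spirit of Theorem~\ref{rado-theorem} and the argument in Proposition~\ref{different-intervals-proposition}) applied to $u - \tilde u$ on the lens-shaped region $E \cap T_s(E)$ shows $u > \tilde u$ there, with strict inequality and nonvanishing gradient difference; feeding this through as $s \to 0$ gives the strict monotonicity of $u$ along the rotation orbits in the quadrant, which is exactly $v \ne 0$ with a definite sign. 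Finally the sign is pinned down by evaluating near the $x_1$-axis: there $D_2 u = 0$ and, using $k_1 \ge k_i$ with $a_1 > a_2$ forcing $k_1 > k_2$ at the origin (Theorem~\ref{order-preserving-theorem}), a short Hopf-lemma computation of $\partial_{x_2} v$ on the $x_1$-axis, or the boundary behavior near $\partial E$ combined with the established nonvanishing, shows $v$ and $x_1 x_2$ share a sign.

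The main obstacle I expect is making the ``reflection conjugated by the anisotropic rescaling maps $E$ strictly into itself'' step precise and checking that the push-forward metric $(T_s)_* g$ still has the property that graphs and their vertical translates are minimal — i.e. that $\tilde u$ is genuinely a translator for a legitimate metric so that the strong maximum principle and the Rado-type Morse-theoretic count (as invoked in Proposition~\ref{different-intervals-proposition}) apply. In particular one must verify that $T_s$ commutes with vertical translations (it does, acting only on the horizontal coordinates) and that the lens region $E \cap T_s(E)$ has few enough boundary components for the Rado/Morse input to yield ``at most one critical point'' and hence no sign change. Everything else — the reduction to two dimensions, the identification of the zero set of $v$, the symmetry bookkeeping — is routine.
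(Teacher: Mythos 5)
The gap is the very first computation, and everything after it is an attempt to route around a mistake of your own making. On $\partial\Ee$ you have $u=0$ and $u>0$ in the interior (the paper notes $u>0$ inside since the translator equation admits no interior local minima), so $Du$ is an \emph{inward}-pointing normal: $Du=-c\,(a_1x_1,\dots,a_nx_n)$ with $c>0$ by the Hopf lemma. Hence on $\partial\Ee$,
\[
   (x_1D_2-x_2D_1)u \;=\; -c\,(a_2-a_1)\,x_1x_2 \;=\; c\,(a_1-a_2)\,x_1x_2,
\]
which has the \emph{same} sign as $x_1x_2$, not the opposite one. The ``bare boundary comparison'' you discarded is exactly the paper's proof: $f=(x_1D_2-x_2D_1)u$ is the Jacobi field generated by rotation about $\{x_1=x_2=0\}$ (a $g$-isometry), the graph is stable because vertical translation gives a positive Jacobi field, $f$ vanishes on the portions of $\partial Q$ lying in $\{x_1=0\}\cup\{x_2=0\}$ (by the evenness of $u$ in each variable) and is positive on $\partial\Ee\cap\{x_1>0,\,x_2>0\}$, so the maximum principle for the quotient by the positive Jacobi field gives $f>0$ throughout the interior of the quadrant region $Q\subset\Ee$ (note: one works on the $n$-dimensional region $Q$, not on a two-dimensional slice, since restricting to a slice does not preserve ellipticity), and the other quadrants follow by the reflection symmetries.

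The substitute argument you propose does not work. The map $T_s$ (anisotropic rescaling, reflect, rescale back) preserves the quadratic form $\sum a_ix_i^2$, so it maps $\Ee$ \emph{onto} itself, not strictly inside it; and, more fatally, $u\circ T_s$ is not a translator: the translator equation is invariant under Euclidean isometries fixing the vertical direction, not under anisotropic linear maps, so $u\circ T_s$ is minimal only for the pushforward metric $(T_s)_*g\neq g$, and the difference $u-u\circ T_s$ satisfies no elliptic equation to which the strong maximum principle or the Rado/Morse count of Proposition~\ref{different-intervals-proposition} applies. You flag this as the ``main obstacle,'' but it is not a technicality to be checked --- it is false. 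Finally, invoking Theorem~\ref{order-preserving-theorem} ($a_1>a_2\Rightarrow k_1>k_2$) to pin down the sign is circular: in the paper that theorem is \emph{deduced from} Theorem~\ref{same-sign-theorem}, via the harmonicity of the lowest-order Taylor polynomial of $f$ at the origin.
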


Of course if $a_1=a_2$, then (by uniqueness of the solution $u$), $u$ is rotationally
symmetric about the subspace $\{x_1=x_2=0\}$, so $(x_1D_2-x_2D_1) u\equiv 0$.

\begin{proof}
Let $M$ be the graph of $u$.  Recall that $M$ is stable in the Ilmanen metric $g$
(since vertical translations give a Jacobi Field that is everywhere positive.)
Consequently, if any Jacobi Field on a connected region $U$ in $M$ is nonnegative
on $\partial U$, then it is nonnegative on all of $U$, and if it is positive on some
portion of $\partial U$, then it is positive everywhere in the interior of $U$ by the strong maximum principle.

Let
\[
 f= (x_1D_2-x_2D_1) u.
\]
Note that $f$ is equivalent to the Jacobi Field coming from rotating $M$ about the $\{x_1=x_2=0\}$ plane.
Consider $f$ on the region $Q:=\{x\in \Ee: x_1\ge 0, \, x_2\ge 0\}$.

Clearly $f=0$ on the portions of $\partial Q$ where $x_1=0$ or $x_2=0$.
On the remaining portion of $\partial Q$, that is, on $\partial \Ee\cap \{x_1>0,\,x_2>0\}$,
we see that $f>0$ since $a_1>a_2$.  
Thus $f> 0$ everywhere in the interior of $Q$:  
\begin{equation}\label{positive-inside} 
\text{$f>0$ everywhere in $Q\setminus\{x_1x_2=0\}$}.
\end{equation}
By symmetry, it follows that $f>0$ wherever $x_1x_2>0$
and that $f<0$ wherever $x_1x_2<0$.
\end{proof}

\begin{theorem}\label{order-preserving-theorem}
If $a_1>a_2$, then $k_1>k_2$.
\end{theorem}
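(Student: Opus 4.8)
The plan is to exploit the Jacobi field $f:=(x_1D_2-x_2D_1)u$ appearing in Theorem~\ref{same-sign-theorem}. Since all $a_i>0$, the origin lies in the interior of $\Ee$, so $u$ and hence $f$ are real-analytic near $0$; moreover $Du(0)=0$, $D^2u(0)=\operatorname{diag}(-k_1,\dots,-k_n)$, and $u$ is even in each variable. A short Taylor expansion at $0$ gives $f(x)=(k_1-k_2)\,x_1x_2+O(|x|^4)$, so that $D_{12}f(0)=k_1-k_2$, and it suffices to prove $D_{12}f(0)>0$; Theorem~\ref{same-sign-theorem} already gives $f\ge 0$ on $\{x_1\ge0,\,x_2\ge0\}$, hence the easy inequality $D_{12}f(0)\ge 0$. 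Since $f$ vanishes on the smooth hypersurfaces $\{x_1=0\}$ and $\{x_2=0\}$ (by evenness of $u$) and is real-analytic near $0$, we may write $f=x_1x_2\,g$ with $g$ real-analytic near $0$ and $g(0)=D_{12}f(0)$. By Theorem~\ref{same-sign-theorem}, $f$ has the same sign as $x_1x_2$ throughout $\Ee$, so $g\ge 0$ near $0$. It remains only to rule out $g(0)=0$.

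So suppose $g(0)=0$. If $g$, hence $f$, vanished to infinite order at $0$, then $f\equiv 0$ on the connected interior of $\Ee$ by real-analyticity, contradicting $f>0$ on $\{x_1>0,\,x_2>0\}$. Thus $g$ vanishes to some finite order $m\ge 2$ with leading homogeneous part $p_m$, and $p_m\ge 0$ because $g\ge 0$; the leading homogeneous part of $f$ is then $x_1x_2\,p_m$, of degree $m+2$. Now $f$ satisfies the linearization $Lf=0$ of the translator equation at $u$; writing $L=a^{ij}(x)D_{ij}+b^i(x)D_i$ (no zeroth-order term, since the translator equation does not involve $u$ itself), we have $a^{ij}(0)=\delta^{ij}$, because the induced metric on the graph is Euclidean at $0$, where $Du=0$. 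Comparing the lowest-order Taylor terms of $Lf=0$: the degree-$m$ part of $Lf$ is exactly $\Delta(x_1x_2\,p_m)$, since every other contribution vanishes to order $\ge m+1$ (using $a^{ij}-\delta^{ij}=O(|x|)$ and the high vanishing order of $f$). Hence $x_1x_2\,p_m$ is a harmonic polynomial.

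The contradiction now comes from the elementary fact that a nonnegative homogeneous polynomial $p_m$ with $x_1x_2\,p_m$ harmonic must vanish identically: $x_1x_2$ is harmonic of degree $2$ and $x_1x_2\,p_m$ is harmonic of degree $m+2>2$, so they restrict to $L^2(\SS^{n-1})$-orthogonal spherical harmonics, giving
\[
0=\int_{\SS^{n-1}}(x_1x_2\,p_m)(x_1x_2)\,d\sigma=\int_{\SS^{n-1}}x_1^2x_2^2\,p_m\,d\sigma,
\]
and since $p_m\ge 0$ this forces $p_m\equiv 0$, contrary to the choice of $m$. Therefore $g(0)>0$, i.e. $k_1-k_2=g(0)>0$. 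The one delicate step is the passage from the easy inequality $k_1\ge k_2$ to the strict one — a Hopf-lemma-at-a-corner effect for the right-angle wedge $\{x_1\ge0,\,x_2\ge0\}$ along its edge $\{x_1=x_2=0\}$ — which the leading-order analysis above converts into the harmonic-polynomial computation; alternatively one could invoke a Serrin-type corner boundary-point lemma directly.
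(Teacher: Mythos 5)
Your proof is correct and follows essentially the same route as the paper: both rest on Theorem~\ref{same-sign-theorem} together with the fact that the lowest nonvanishing homogeneous Taylor term of the rotational Jacobi field $f=(x_1D_2-x_2D_1)u$ is harmonic, and then a sign/orthogonality argument forces that term to be a positive multiple of $x_1x_2$, giving $k_1>k_2$. Your version merely repackages this as a contradiction after factoring $f=x_1x_2\,g$, and usefully makes explicit the spherical-harmonic orthogonality computation and the finite-vanishing-order (analyticity) point that the paper leaves implicit.
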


\begin{proof}
Let $R_\theta:\RR^n\to\RR^n$ denote rotation by $\theta$
about the $x_1x_2$-plane.  Thus
\[
  \frac{d}{d\theta}R_\theta(x) =  x_1\ee_2 - x_2\ee_1.
\]
For each $\theta$, the function $u\circ R_\theta$ is also a solution of the translator equation, so
\[
  f:= \left(\pdf{}{\theta}\right)_{\theta=0} u\circ R_\theta = (x_1D_2 - x_2D_1)u
\]
solves the linearization of the 
 translator equation~\eqref{translator-equation}.
In particular, 
\[
   a_{ij}D_{ij}f + b_i D_if = 0, 
\]
where $a_{ij}(0)=\delta_{ij}$ and $b_i(0)=0$.  Thus the lowest nonzero
polynomial $P(x)$ in the Taylor series for $f$ at $0$ is harmonic.
Since it has the same sign as the homogeneous harmonic polynomial $x_1x_2$,
in fact 
\begin{equation}\label{polynomial}
 \text{$P(x)=c\,x_1x_2$ for some constant $c>0$.}
\end{equation}
Now
\[
  u(x) = -\sum_i k_ix_i^2 + O(|x|^3)
\]
so
\[
 (x_1D_2-x_2D_1) u(x) = (k_1-k_2)2x_1x_2 + O(|x|^3).
\]
Hence $k_1-k_2>0$ by~\eqref{polynomial}.
\end{proof}

\begin{theorem}\label{low-point-theorem}
Extend $u$ to all of $\RR^n$ by setting $u(x)=0$ for $x\notin \Ee$.

Suppose that
\[
    k_1 = \max_{i\le n} k_i,
\]
Then for each $\rho>0$,
\[
   u(\rho \ee_1)  = \min \{ u(x): |x|=\rho\}.
\]
\end{theorem}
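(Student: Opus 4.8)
The plan is to show that from any point $x$ on the sphere $\{|x|=\rho\}$ one can decrease $u$ by a finite sequence of rotations in coordinate $2$-planes that eventually carries $x$ to $\rho\ee_1$. First observe that the hypothesis $k_1=\max_i k_i$ forces $a_1=\max_i a_i$: if $a_j>a_1$ for some $j$, then $k_j>k_1$ by Theorem~\ref{order-preserving-theorem}, a contradiction. Recall also that $u\ge 0$ on all of $\RR^n$ (it has no interior local minimum, it vanishes on $\partial\Ee$, and it has been extended by $0$ outside $\Ee$), and that $u$ is an even function of each coordinate by uniqueness.

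The key lemma is: for every $z\in\RR^n$ and every $j\in\{2,\dots,n\}$, if $z'$ denotes the point obtained from $z$ by replacing the pair $(z_1,z_j)$ with $\bigl(\sqrt{z_1^2+z_j^2},\,0\bigr)$, then $u(z)\ge u(z')$. Granting this, one iterates over $j=2,3,\dots,n$: each step zeroes out one more coordinate, moves the whole of its squared length into the first coordinate, leaves $|x|$ unchanged, and does not increase $u$; after $n-1$ steps the image is exactly $\bigl(\sqrt{x_1^2+\dots+x_n^2},0,\dots,0\bigr)=\rho\ee_1$. Hence $u(\rho\ee_1)\le u(x)$ for every $x$ with $|x|=\rho$, which (since $\rho\ee_1$ itself lies on the sphere) is precisely the asserted statement.

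To prove the key lemma, let $R_\phi$ be the rotation by angle $\phi$ in the $x_1x_j$-plane, consider the orbit $\phi\mapsto R_\phi z$, and write $y=R_\phi z$ and $r^2=z_1^2+z_j^2$. If $a_1=a_j$, then by the remark following Theorem~\ref{same-sign-theorem} the function $u$ is rotationally invariant about $\{x_1=x_j=0\}$, so $u(z)=u(z')$ and we are done. Assume $a_1>a_j$. Along the orbit,
\[
   \sum_i a_i y_i^2 = \bigl(a_j+(a_1-a_j)\cos^2\phi\bigr)\,r^2 + \sum_{k\ne 1,j} a_k z_k^2,
\]
which (since $a_1-a_j>0$) is largest precisely at the two points of the orbit where $y_j=0$, namely $z'$ and its reflection in $\{x_1=0\}$. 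Consequently, if the orbit ever leaves $\Ee$, then $z'\notin\Ee$, so $u(z')=0\le u(z)$ and the lemma holds. Otherwise the orbit stays inside $\Ee$, $u$ is smooth along it, and $\frac{d}{d\phi}\,u(R_\phi z)=\bigl(y_1 D_j u-y_j D_1 u\bigr)(y)$, which by Theorem~\ref{same-sign-theorem} (for the $x_1x_j$-plane) has the same sign as $y_1y_j$. Tracking this sign around the four arcs cut out of the orbit by $\{y_1=0\}$ and $\{y_j=0\}$ shows that $\phi\mapsto u(R_\phi z)$ attains its minimum at one of the two points with $y_j=0$; by evenness of $u$ in $x_1$ these two points give the same value, namely $u(z')$. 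Hence $u(z')=\min_\phi u(R_\phi z)\le u(z)$, and the lemma is proved.

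The one point that requires care — and the step I expect to be the main obstacle — is that the extended function $u$ is only continuous, not $C^1$, across $\partial\Ee$, so the differential-inequality argument is legitimate only on orbits contained in $\Ee$. This is exactly why the elementary computation identifying $z'$ as the "most exposed" point of the rotation orbit is needed: it lets us dispose of orbits that cross $\partial\Ee$ using nothing more than $u\ge 0$ and $u\equiv 0$ outside $\Ee$.
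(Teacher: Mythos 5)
Your proposal is correct and follows essentially the same route as the paper: deduce $a_1=\max_i a_i$ from Theorem~\ref{order-preserving-theorem}, then use the sign of the rotational derivative $(x_1D_j-x_jD_1)u$ from Theorem~\ref{same-sign-theorem} to show that replacing $(x_1,x_j)$ by $\bigl(\sqrt{x_1^2+x_j^2},0\bigr)$ does not increase $u$, and iterate over $j=2,\dots,n$. The only difference is cosmetic: the paper disposes of boundary issues by noting that if $\rho\ee_1\in\Ee$ then the whole sphere lies in $\Ee$ (otherwise the claim is trivial since $u\ge 0$ and $u(\rho\ee_1)=0$), whereas you handle orbits crossing $\partial\Ee$ inside the key lemma; both are fine.
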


\begin{proof}
By Theorem~\ref{order-preserving-theorem}, 
\[
  r_1 \le r_i \quad \text{for all $i$}.
\]
We may assume that $\rho\ee_1$ is in $\Ee$, as otherwise
as otherwise the assertion is trivially true.  Hence the entire sphere $\{x: |x|=\rho\}$
is contained in $\Ee$.

Suppose
\[
  \min_{\partial \BB(0,\rho)} u
\]
occurs at the point $x$.  By symmetry, we can assume that $x_i\ge 0$ for each $i$.

Let $i>1$.  Define $\tilde x$ by
\begin{align*}
  \tilde x_1 &= \sqrt{x_1^2 + x_i^2},  \\
  \tilde x_i  &= 0, \\
  \tilde x_j &= x_j \quad\text{for $j\ne 1, i$}.
\end{align*}
Applying this with $i=2,3,\dots, n$, we see that the minimum is attained at $(\rho,0,0,\dots,0)$.
\end{proof}

\section{Ellipsoidal Slabs}\label{ellipsoidal-slabs-section}

\renewcommand{\aa}{\mathbf{a}}

As before, $\Delta^n$ is the set of $\aa=(a_1,\dots,a_n)$ where each $a_i\ge 0$ and $\sum_ia_i=1$.
Given $\aa\in \Delta^n$ and $R>0$, we let
\[
 \Ee=\Ee(\aa,R)= \left\{x\in \RR^n: \sum_i a_ix_i^2 \le R^2 \right\}.
\]

For $\aa\in \Delta^n$ and $b>\pi/2$, we let
\[
   u=u_{\aa,b,R}: \Ee(\aa,R)\times [-b,b] \to \RR
\]
be the bounded translator with boundary values $0$.
Let $k_i = -D_{ii}u_{\aa,b,R}(0)$.
The theorems in this section describe properties of such $u$.
In Section~\ref{higher-slabs-section}, we will let $R\to\infty$ to get complete translators defined over 
the slab $\RR^n\times(-b,b)$.

\begin{theorem}\label{lionel} For $u=u_{\aa,b,R}$,
\begin{enumerate}[\upshape (1)]
\item\label{lionel-long-item} If $a_i=0$, then $D_iu\equiv 0$.
\item\label{lionel-first-alexandrov-item} If $a_i>0$, then $D_iu(x)$ and $x_i$ have opposite signs at all points
in the interior of the domain, and $D_{ii}u(0)<0$.
\item\label{lionel-second-alexandrov-item} 
    $D_{n+1}u(x)$ and $x_{n+1}$ have opposite signs at all points in the interior
    of the domain, and $D_{n+1,n+1}u(0)<0$.
\item\label{lionel-rotational-equality-item} If $1\le i,j\le n$ and if $a_i=a_j$, then
\[
    ( x_i D_j - x_j D_i) u \equiv 0.
\]
\item\label{lionel-rotational-inequality-item}  If $1\le i, j \le n$ and $a_j<a_i$, then
$
k_i>k_j
$,
and
\[
 ( x_i D_j - x_j D_i) u
\]
 and $x_ix_j$
have the same sign at all points in the interior of $\Ee$.
\end{enumerate}
\end{theorem}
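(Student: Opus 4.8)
The plan is to establish the five assertions in turn, reusing the machinery developed in Section~\ref{rectangles-section} and in Theorems~\ref{same-sign-theorem}--\ref{low-point-theorem}; the slab factor $[-b,b]$ here plays exactly the role that the interval $[-b,b]$ played in the rectangular case. First I would record the elementary facts about $u=u_{\aa,b,R}$: existence by the continuity method (the solutions being trapped between $0$ and a vertical translate of the bowl soliton); uniqueness because $u$ and its vertical translates foliate $\Ee\times[-b,b]\times\RR$; positivity of $u$ in the interior (no interior local minimum, by the translator equation~\eqref{translator-equation}); smoothness in the interior; and, by the tangent-cone argument of Proposition~\ref{rectangles-proposition}\eqref{C^1-item}, that $u$ is $C^1$ up to $\partial(\Ee\times[-b,b])$, the only non-smooth locus being the corner edge $\partial\Ee\times\{\pm b\}$.

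Assertion~\eqref{lionel-long-item} is immediate from Theorem~\ref{long-domain-theorem}: if $a_i=0$ the domain is a product invariant under translation in $\ee_i$, so the unique bounded translator vanishing on the boundary cannot depend on $x_i$. Granting this, I would delete all coordinates with $a_l=0$ and assume henceforth that every $a_l>0$, so that $\Ee$ is a bounded ellipsoid and $D:=\Ee\times[-b,b]$ is a bounded convex body symmetric across each hyperplane $\{x_m=0\}$, $m\in\{1,\dots,n+1\}$. For~\eqref{lionel-first-alexandrov-item} and~\eqref{lionel-second-alexandrov-item} I would run the Alexandrov moving-planes argument in the direction $\ee_m$ (legitimate because $D$ is convex and symmetric about $\{x_m=0\}$), sliding the reflection plane from the far edge of $D$ down to $\{x_m=0\}$, to get that $u$ is nonincreasing in $|x_m|$. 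Then $v:=D_mu$ solves a linear elliptic equation $a_{jk}D_{jk}v+b_jD_jv=0$ with no zeroth-order term (obtained by differentiating~\eqref{translator-equation}), vanishes on $\{x_m=0\}$ by symmetry, and is not identically $0$ on $\{x_m>0\}$ (otherwise $u$ would be independent of $x_m$, forcing $u\equiv 0$); so by the strong maximum principle $v$ has strictly the sign opposite to $x_m$ in the interior, and the Hopf boundary-point lemma applied at $0$ gives $D_mv(0)=D_{mm}u(0)<0$. Taking $m=i\le n$ gives~\eqref{lionel-first-alexandrov-item}, and $m=n+1$ gives~\eqref{lionel-second-alexandrov-item}. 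For~\eqref{lionel-rotational-equality-item}: if $a_i=a_j$ with $i,j\le n$, then $D$ is invariant under every rotation $R_\theta$ in the $x_ix_j$-plane, so $u\circ R_\theta$ is again the bounded translator over $D$ with zero boundary values; uniqueness gives $u\circ R_\theta\equiv u$, and differentiating in $\theta$ at $0$ yields $(x_iD_j-x_jD_i)u\equiv 0$.

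The substantive assertion is~\eqref{lionel-rotational-inequality-item}, which I would handle along the lines of Theorems~\ref{same-sign-theorem} and~\ref{order-preserving-theorem}. Assume $a_j<a_i$ with $i,j\le n$ and set $f:=(x_iD_j-x_jD_i)u$, the Jacobi field on the graph $M$ of $u$ generated by rotating $M$ about $\{x_i=x_j=0\}$. I would study $f$ on $Q:=\{x\in D:\ x_i\ge 0,\ x_j\ge 0\}$: $f$ vanishes on the faces $\{x_i=0\}$ and $\{x_j=0\}$ by the reflection symmetry from~\eqref{lionel-first-alexandrov-item}; it vanishes on the horizontal faces $\Ee\times\{\pm b\}$ because $u\equiv 0$ there and $\ee_i,\ee_j$ are tangent to them; and on the curved part $(\partial\Ee\times[-b,b])\cap\{x_i>0,\,x_j>0\}$ one has $Du=\mu\,(a_1x_1,\dots,a_nx_n,0)$ with $\mu<0$ (Hopf lemma, since $u>0$ in the interior of $D$ while $u=0$ on $\partial\Ee\times[-b,b]$), so there $f=\mu\,x_ix_j(a_j-a_i)>0$. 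Hence $f\ge 0$ on $\partial Q$ and $f\not\equiv 0$, and the stability of $M$ in the Ilmanen metric (the maximum principle for Jacobi fields used in Theorem~\ref{same-sign-theorem}) forces $f>0$ in the interior of $Q$; by the reflection symmetries, $f$ then has the same sign as $x_ix_j$ throughout the interior of $\Ee$. To conclude $k_i>k_j$: $f$ solves a linear elliptic equation whose second-order part is the Laplacian at $0$ and whose first-order coefficients vanish at $0$ (because $Du(0)=0$), so the lowest-degree nonzero polynomial in its Taylor series at $0$ is a harmonic polynomial with the sign of $x_ix_j$, hence — as in the proof of Theorem~\ref{order-preserving-theorem} — equal to $c\,x_ix_j$ for some $c>0$; since $D^2u(0)$ is diagonal with $ii$ and $jj$ entries $-k_i$ and $-k_j$, one computes $f(x)=(k_i-k_j)x_ix_j+O(|x|^3)$, so $k_i-k_j=c>0$.

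The step I expect to be the main obstacle is the boundary bookkeeping in~\eqref{lionel-rotational-inequality-item}: pinning down the sign of $f$ along the curved edge $(\partial\Ee\times[-b,b])\cap\{x_i,x_j>0\}$ and handling the non-smooth corner locus $\partial\Ee\times\{\pm b\}$, which also requires the $C^1$-regularity of $u$ up to that corner that underlies the moving-planes steps in~\eqref{lionel-first-alexandrov-item}--\eqref{lionel-second-alexandrov-item}. Beyond this, the argument is essentially a transcription of the proofs already given for the ball and the rectangle.
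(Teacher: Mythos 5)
Your proposal is correct and follows essentially the same route as the paper: assertion (1) via Theorem~\ref{long-domain-theorem}, assertions (2)--(3) by Alexandrov moving planes (sharpened by the strong maximum principle and Hopf lemma), and assertions (4)--(5) by transcribing the Jacobi-field sign argument and the harmonic-leading-term computation of Theorems~\ref{same-sign-theorem} and~\ref{order-preserving-theorem} to the domain $\Ee\times[-b,b]$, with the extra boundary faces and the corner edge handled exactly as in the rectangle case. The paper's proof is just this reduction stated in two lines, so your write-up supplies the same argument with the details filled in.
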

(The theorem does not assert any relationship between $k_i$ and $k_{n+1}$.)

\begin{proof}
Assertion~\eqref{lionel-long-item} follows immediately from Theorem~\ref{long-domain-theorem}.
Assertions~\eqref{lionel-first-alexandrov-item} and~\eqref{lionel-second-alexandrov-item}
follow from the Alexandrov moving plane argument.
The proofs of Assertions~\eqref{lionel-rotational-equality-item}
and~\eqref{lionel-rotational-inequality-item} 
are almost identical to the proofs of Theorems~\ref{same-sign-theorem}
and~\ref{order-preserving-theorem}.
\end{proof}

\begin{theorem}\label{monotone-along-edge}
If $\eta$ is the inward pointing unit normal on $\partial \Ee(\aa,R)$ and if
\[
    (x_1,x_2,\dots,x_n)\in \partial \Ee(\aa,R),
\]
 then
\[
  s \in [-b,b] \mapsto  \eta\cdot\nabla u(x_1,x_2,\dots,x_n,s) = |Du(x_1,x_2,\dots,x_n,s)|
\]
is a decreasing function of $|s|$.
\end{theorem}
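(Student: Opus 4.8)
The plan is to mimic the proof of Proposition~\ref{rectangles-proposition}\eqref{edge-slope-item}, using the Alexandrov-type monotonicity of Theorem~\ref{lionel}\eqref{lionel-second-alexandrov-item} in the $x_{n+1}$-direction. Fix a boundary point $p=(x_1,\dots,x_n)\in\partial\Ee(\aa,R)$ and consider the curve $s\mapsto u(x_1,\dots,x_n,s)$ along the vertical segment over $p$. First I would observe that on the whole edge $\{p\}\times[-b,b]$ the tangential derivatives $D_iu$ ($i\le n$) vanish identically --- this is because $u\equiv 0$ on $\partial\Ee\times[-b,b]$, so the gradient of $u$ restricted to that boundary face is purely normal; hence along this edge $|Du|=\eta\cdot\nabla u$, which gives the asserted equality in the statement and reduces the problem to showing that $\eta\cdot\nabla u$ is monotone in $|s|$.

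Next I would compute the $s$-derivative of $\eta\cdot\nabla u$ along the edge. Writing $\partial/\partial\eta$ for the inward normal derivative on $\partial\Ee(\aa,R)$ (a well-defined operator acting on functions near that face), we have
\[
\frac{\partial}{\partial s}\Big(\eta\cdot\nabla u(p,s)\Big) = \frac{\partial}{\partial\eta}\Big(D_{n+1}u\Big)(p,s)
= \frac{\partial}{\partial\eta}\Big(\text{(a negative-signed function for $s>0$)}\Big),
\]
using Theorem~\ref{lionel}\eqref{lionel-second-alexandrov-item}, which tells us $D_{n+1}u<0$ throughout the interior for $s>0$ and $D_{n+1}u\equiv 0$ on the slice $\{s=0\}$, hence also $D_{n+1}u=0$ on $\partial\Ee\times\{0\}$. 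So $D_{n+1}u$, as a function on the half-domain $\Ee\times(0,b)$, is $\le 0$ with boundary value $0$ on the face $\partial\Ee\times(0,b)$; by the Hopf boundary point lemma (or simply by the sign of the inward difference quotient) its inward normal derivative on that face is $\ge 0$. Therefore $\partial_s(\eta\cdot\nabla u)(p,s)\ge 0$ for $s\in(0,b)$, i.e.\ $\eta\cdot\nabla u(p,s)$ is nondecreasing in $s$ on $[0,b]$, equivalently decreasing in $|s|$; the case $s\in[-b,0]$ follows by the $(x_{n+1}\to -x_{n+1})$ symmetry recorded implicitly in Theorem~\ref{lionel}.

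The one point that needs care --- and which I expect to be the main technical obstacle --- is justifying that $D_{n+1}u$ is a legitimate admissible function for this normal-derivative argument near the face $\partial\Ee\times(0,b)$, given that $u$ is only $C^1$ (not $C^2$) at the "corner circle" $\partial\Ee\times\{\pm b\}$. Away from those corners everything is smooth and the Hopf lemma applies directly, so the inequality $\partial_s(\eta\cdot\nabla u)\ge 0$ holds on $\partial\Ee\times(0,b)$ minus a neighborhood of $s=b$. To handle $s$ near $b$, I would either invoke the same blow-up argument used in Proposition~\ref{rectangles-proposition}\eqref{C^1-item} (the corner blow-up is a quarter-plane, which is flat, so $|Du|\to 0$ there and the monotone quantity extends continuously) or simply note that $\eta\cdot\nabla u(p,b)=0$ while $\eta\cdot\nabla u(p,s)\ge 0$ for $s<b$, so no failure of monotonicity can occur at the endpoint. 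Either way the conclusion extends to all of $[0,b]$ by continuity of $|Du|$, completing the proof.
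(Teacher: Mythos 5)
Your overall strategy is exactly the paper's: note that on the face $\partial\Ee\times[-b,b]$ all tangential derivatives of $u$ vanish (so $\eta\cdot\nabla u=|Du|$ there, with the $+$ sign because $u\ge 0$ inside), use the Alexandrov sign of $D_{n+1}u$ from Theorem~\ref{lionel}\eqref{lionel-second-alexandrov-item}, and commute the mixed partials $D_{n+1}D_\eta u = D_\eta D_{n+1}u$, just as in Proposition~\ref{rectangles-proposition}\eqref{edge-slope-item}. However, as written your key step has the sign backwards, and the error propagates to a conclusion that is the opposite of the theorem. On the half-domain $\Ee\times(0,b]$ the function $D_{n+1}u$ is $\le 0$ in the interior and $\equiv 0$ on the face $\partial\Ee\times(0,b]$; moving inward from the face it therefore drops from $0$ to negative values, so the inward difference quotient gives $D_\eta\bigl(D_{n+1}u\bigr)\le 0$ on that face, not $\ge 0$ (Hopf, if applicable, would give strict negativity, again not positivity). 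Consequently $D_{n+1}\bigl(D_\eta u\bigr)\le 0$ on $(\partial\Ee)\times[0,b]$, i.e.\ $s\mapsto \eta\cdot\nabla u(p,s)$ is nonincreasing on $[0,b]$, which is what ``decreasing in $|s|$'' means here; the negative range of $s$ follows by the $x_{n+1}\to -x_{n+1}$ symmetry. In your text you assert the normal derivative is $\ge 0$, deduce that $\eta\cdot\nabla u(p,s)$ is nondecreasing in $s$ on $[0,b]$, and then declare this ``equivalently decreasing in $|s|$'' --- that equivalence is false, and with your stated signs the argument proves the reverse monotonicity. (Your own endpoint remark exposes the inconsistency: a nonnegative function that is nondecreasing on $[0,b]$ and vanishes at $s=b$ would have to vanish identically.)

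Once the sign is corrected the proof is correct and coincides with the paper's, which consists precisely of the two displayed inequalities $D_\eta D_{n+1}u\le 0$ and $D_{n+1}D_\eta u\le 0$ on the face. The extra care you devote to the corner $\partial\Ee\times\{b\}$ is not needed for the weak monotonicity statement (the inequality on $(0,b)$ plus continuity of $|Du|$ up to the corner suffices, and the paper does not address the corner at all), and the Hopf lemma is likewise unnecessary: the non-strict inequality follows from the sign of the inward difference quotient alone.
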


\begin{proof}
Since $D_{n+1}u=0$ on $(\partial\Ee)\times[0,b]$ and since $D_{n+1}u<0$
on $\operatorname{interior}(\Ee) \times (0,b]$, it follows that
\[
   D_\eta D_{n+1}u \le 0 \quad \text{on $(\partial \Ee)\times (0,b]$}.
\]
Hence
\[
  D_{n+1} D_\eta u \le 0 \quad \text{on $(\partial \Ee)\times [0,b]$}.
\]
Likewise,
\[
  D_{n+1} D_\eta u \ge 0 \quad \text{on $(\partial \Ee)\times [-b,0]$}.
\]
\end{proof}

\begin{theorem}\label{w-bound-theorem}
Let $\aa\in \Delta^n$, $R\ge 1$, and let
\[
   u: \Ee(\aa,R)\times[-b,b]\to\RR
\]
be the translator with boundary values $0$.
Let
\[
   w(x) = (b - |x_{n+1}|) \sqrt{1+|Du(x)|^2}.
\]
Then
\[
   \max w \le C,
\]
where $C=C(n,b)<\infty$ depends only on $n$ and $b$.
\end{theorem}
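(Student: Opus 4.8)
The goal is a gradient bound $w = (b - |x_{n+1}|)\sqrt{1+|Du|^2} \le C(n,b)$ on $\Ee(\aa,R)\times[-b,b]$ that is uniform in $R \ge 1$ and $\aa \in \Delta^n$; this is the higher-dimensional, ellipsoidal-slab analogue of Proposition~\ref{rectangle-gradient-bound}, and the proof should follow that template closely. The function $\eta(x) = b - x_{n+1}$ is affine and invariant under vertical (i.e.\ $\ee_{n+1}$) translations, so by Theorem~\ref{spruck-xiao-gradient-bound} the function $\eta v = w$ (for $x_{n+1}\ge 0$; use the symmetry $x_{n+1}\mapsto -x_{n+1}$ otherwise) has no interior local maximum on $\{0 < x_{n+1} < b\}$ where $\eta > 0$. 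Hence $\max w$ is attained on the boundary of $\Ee(\aa,R)\times[0,b]$: either on $\Ee\times\{b\}$, where $w\equiv 0$; or on $\Ee\times\{0\}$; or on $(\partial\Ee)\times[0,b]$. On the lateral boundary $(\partial\Ee)\times(0,b]$, Theorem~\ref{monotone-along-edge} tells us $|Du(x',s)|$ is decreasing in $|s|$, so along any such edge $w(x',s) = (b-s)\sqrt{1+|Du(x',s)|^2}$ is decreasing in $s$ and its supremum is attained at $s = 0$, i.e.\ again on $\Ee\times\{0\}$. So in all cases $\max w$ is attained at some point $(x', 0)$ with $x'\in\Ee(\aa,R)$.

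The next step is a contradiction-compactness argument exactly parallel to the end of the proof of Proposition~\ref{rectangle-gradient-bound}. Suppose the theorem fails: then there are $R(i)\ge 1$, $b$ fixed (only $\aa$ and $R$ vary, so we may also let $\aa(i)$ vary in the compact set $\Delta^n$), and points $p_i = (x_i', 0)$ with $w(p_i)\to\infty$. Since $b - |x_{n+1}| \le b$ is bounded, this forces $|Du_{\aa(i),b,R(i)}(p_i)| \to \infty$. By the symmetry $x_{n+1}\mapsto -x_{n+1}$ (Theorem~\ref{lionel}\eqref{lionel-second-alexandrov-item}), $D_{n+1}u(p_i) = 0$, so the full gradient blowup is carried by the horizontal directions, and the tangent plane $\Tan(M_i, q_i)$ at $q_i = (x_i', 0, u(x_i',0))$ converges (after a subsequence) to a vertical plane $\Pi$ containing the $\ee_{n+1}$ direction. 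Translate $M_i$ by $-q_i$ to get translators $M_i'$ which, by the stability/compactness discussion in the Preliminaries (local $g$-area bounds give smooth subsequential convergence, including at the boundary), converge to a limit translator $M'$; also pass to a subsequence so that $\dist(0, \partial M_i')\to\delta\in[0,\infty]$.

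Now run the unique-continuation endgame. By the interior maximum principle (if $\delta > 0$) or the boundary maximum principle (if $\delta = 0$) applied to the Jacobi field $\langle\ee_{n+1},\nu\rangle$ or, more directly, to the fact that $w$ (pulled back to $M'$) attains its maximum at $0$ in the interior or on a minimum-distance boundary, together with Theorem~\ref{spruck-xiao-gradient-bound}, one concludes that near $0$ the surface $M'$ has tangent plane everywhere equal to $\Pi$, so $M'\cap G\subset\Pi$ for some neighborhood $G$ of $0$; by unique continuation $M'\supset\Pi$, a complete vertical plane. But each $M_i'$ lies in the vertical slab $\RR^n\times[-b,b]\times\RR$ (translated by the bounded amount $x_{n+1} = 0$), hence so does $M'$, and a vertical plane cannot fit in a slab of finite width — contradiction. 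Taking $C(n,b)$ to be, say, twice the supremum that would otherwise be needed finishes the proof. The main subtlety, as in the $n=1$ case, is making sure the blowup point $(x_i',0)$ genuinely produces a \emph{vertical} limit plane rather than some other degenerate configuration; that is exactly where the symmetry $D_{n+1}u(x_i',0) = 0$ is essential, and where the lateral-edge monotonicity (Theorem~\ref{monotone-along-edge}) is needed to rule out the blowup escaping to $(\partial\Ee)\times(0,b)$.
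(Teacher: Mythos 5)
Your reduction to a boundary maximum is fine and matches the paper: Theorem~\ref{spruck-xiao-gradient-bound} pushes the maximum of $w$ to the boundary of $\Ee(\aa,R)\times[0,b]$, the factor $b-|x_{n+1}|$ kills the top face, and Theorem~\ref{monotone-along-edge} kills the lateral face, so the maximum sits on $\Ee\times\{0\}$, where $D_{n+1}u=0$ and the limit tangent plane is a vertical hyperplane containing $\ee_{n+1}$. The problem is your endgame, which is a verbatim transplant of the two--dimensional argument of Proposition~\ref{rectangle-gradient-bound}, and that argument genuinely breaks when $n\ge 2$. The maximum principle applied to the nonnegative Jacobi field $\left<\nu,\ee_{n+2}\right>$ (which vanishes at the blow-up point) only tells you that the limit translator is \emph{vertical} near that point, i.e.\ locally of the form $\Sigma\times\RR$ with $\Sigma$ a Euclidean-minimal hypersurface of $\RR^n\times(-b,b)$. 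In one dimension lower this forces $\Sigma$ to be a straight line, hence a plane, hence the unique-continuation/slab contradiction; in higher dimensions it does not, because complete minimal hypersurfaces inside a slab do exist (higher-dimensional catenoids, for instance), so your claims ``$M'\cap G\subset\Pi$'' and ``by unique continuation $M'\supset\Pi$'' are unjustified, and the slab constraint alone yields no contradiction.

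This is exactly where the paper's proof does extra work that your proposal omits. First, it splits according to whether $\dist(\tilde p(k),\partial M(k))$ stays bounded: in that case the limit has a nonempty horizontal boundary with \emph{corners} (coming from $\partial\Ee\times\{\pm b\}$), and verticality of the limit everywhere forces the corner to propagate vertically into the interior, contradicting interior smoothness --- your ``$\delta=0$, boundary maximum principle'' treatment does not capture this, and with boundary present your plane $\Pi$ cannot be obtained at all. Second, when the distance to the boundary is unbounded, the limit is a complete vertical cylinder $\Sigma\times\RR$ in the slab, and to rule this out the paper uses two further ingredients that are absent from your proof: the Alexandrov-type monotonicity of Theorem~\ref{lionel}, which (after normalizing the blow-up point into the nonnegative octant) makes the translated surfaces, hence $\Sigma$, disjoint from $(0,\infty)^n\times\RR$; and a sliding (moving-catenoid) argument with the strong maximum principle to show no complete minimal $\Sigma$ in the slab can avoid that octant while passing through the origin. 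Without these steps (and without addressing the dichotomy $R(k)$ bounded versus $R(k)\to\infty$, and the fact that in higher dimensions the compactness used must be that of the Appendix, Theorems~\ref{compactness-theorem} and~\ref{upper-halfspace-compactness}, rather than the $\RR^3$ discussion in the Preliminaries), your argument does not close.
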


\begin{proof}
Let $\delta(x)= (b-|x_{n+1}|)$.

Suppose the theorem is false.  Then there is a sequence
of translators
\[
   u_k: \Ee(\aa(k),R(k))\times [-b,b]\to \RR
\]
and a sequence points $p(k)$ such that
\begin{equation}\label{blowing-up}
   \delta(p(k))\sqrt{1+|Du_k(p(k))|^2} \to \infty.
\end{equation}

By passing to a subsequence, we can assume that $R(k)$ converges
to a limit $R$ in $[1,\infty]$.  The case $R<\infty$ is straightforward,
so we assume that $R(k)\to \infty$.

We may suppose that $p(k)$ has been chosen to maximize the
left side of~\eqref{blowing-up}.
By symmetry, we may suppose that each coordinate of $p(k)$ is $\ge 0$:
\begin{equation}\label{in-octant}
  p(k) \in [0,\infty)^{n+1}.
\end{equation}
By Theorem~\ref{spruck-xiao-gradient-bound}, 
   $p(k)$ occurs on the boundary of $\Ee(\aa(k),R(k))\times [0,b]$.

Note that $p(k)$ does not lie on $\Ee(\aa(k),R(k))\times \{b\}$ since $\delta\equiv 0$ there.
Also, $p(k)$ does not lie on $(\partial \Ee)\times (0,b]$ by Theorem~\ref{monotone-along-edge}.
Thus
\[
  p(k) \in \Ee(\aa(k),R(k))\times \{0\}.
\]
Let $\tilde p(k) = (p(k), u_k(p(k)))$.

{\bf Case 1}: $\dist(\tilde p(k), \partial M(k))$ is bounded above.

Translate $M(k)$ by $\tilde p(k)$ to get a surface $M'(k)$.
By passing to a subsequence, $M'(k)$ converges to a limit translator $M$.
Note that $\partial M$ is nonempty, horizontal, and has corners.
It follows (see Theorem~\ref{upper-halfspace-compactness}) 
that the convergence of $M'(k)$ to $M$ is smooth except at the corners of $\partial M'$.

Note that the tangent plane to $M$ at $0$ is vertical.  
By the strong maximum principle (if $0\notin \partial M$) or the boundary maximum principle
(if $0\in \partial M$), the tangent plane to $M$ is vertical at every point.   In particular, if $q\in\partial M$,
then $q+t\ee_{n+2}\in M$ for all $t>0$.  But since $\partial M$ has corners, this contradicts the smoothness
of $M$.   Thus Case 1 cannot occur.

{\bf Case 2}: $\dist(\tilde p(k),\partial M(k))$ is unbounded.  By passing to a subsequence,
we can assume that the distance tends to infinity.

Translate $M(k)$  by $-p(k)$ to get $M'(k)$.

Note that $0\in M'(k)$.  Furthermore, by Theorem~\ref{lionel},
\begin{equation}\label{disjoint-region}
\text{$M(k)$ is disjoint from $(0,\infty)^n\times \RR \times (0,\infty)$}
\end{equation}
By passing to a subsequence, we can assume that the $M'(k)$ converge smoothly to a complete,
properly embedded translator $M$ in $\RR^n\times (-b,b)\times \RR$.  (In fact, $M$ is area-minimizing
for the Ilmanen metric.)

Note that $0\in M$ and that the tangent plane to $M$ at $0$ is vertical.  Thus by the maximum
principle, $M$ is vertical everywhere.  That is,
\[
   M = \Sigma\times \RR
\]
for a smooth Euclidean minimal hypersurface $\Sigma\subset \RR^n\times (-b,b)$.
Also, by~\eqref{disjoint-region},
\begin{equation}\label{more-disjoint-region}
\text{$\Sigma$ is disjoint from $(0,\infty)^n\times \RR$.}
\end{equation}
However, using moving catenoids shows that no such surface $\Sigma$ exists.

\newcommand{\cat}{\operatorname{Cat}}

(Here is the moving catenoid argument.  Let $\cat$ be an $n$-dimensional catenoid, i.e., a
minimal hypersurface of revolution about the $x_{n+1}$-axis, bounded by spheres in the plane $x_{n+1}=b$
and the plane $x_{n+1}=-b$.  Let $\cat(s)$ be $\cat$ translated by
\[ 
     s( \ee_1 + \dots + \ee_n).
\]
For $s>0$ very large $\cat(s)$ is disjoint from $M$ by~\eqref{more-disjoint-region}.
Thus $\cat(s)$ is disjoint from $M$ for all $s$, since otherwise the strong maximum
principle would be violated at the first point of contact.
  But this is a contradiction since $0\in M$ and since
 there is an $s$ from which $0\in \cat(s)$.)
 \end{proof}

\section{Higher Dimensional $\Delta$-Wings in Slabs of Prescribed Width}\label{higher-slabs-section}

As in the previous section, we let
\[
  u_{\aa,b,R}: \Ee(\aa,R)\times[-b,b]\subset \RR^{n+1} \to\RR
\]
be the translator with boundary values $0$, and let
\[
   k_i = -D_{ii}u(0).
\]

Now define
\[
  F_{b,R}: \Delta^n \to \Delta^n
\]
as follows.  

Let
\[
  F_{b,R}(\aa) = \frac{(k_1, k_2, \dots, k_n)}{\sum_{i=1}^nk_i}.
\]

As before, $F_{b,R}$ is continuous, and it maps each face of $\Delta^n$
to itself.

Thus $F_{b,R}$ is surjective.

Fix $b>\pi/2$ and $\mathbf{k}\in \Delta^n$.

By surjectivity, for each $R>0$, there is an $\aa(R)\in \Delta_n$ such that
\[
   F_{b,R}(\aa(R)) = \mathbf{k}.
\]

Now every sequence of $R\to\infty$ has a subsequence $R_i\to\infty$ such that
\[
   u_{\aa(R_i), b, R_i} - u_{\aa(R_i), b, R_i}(0)
\]
converges smoothly to a limit
\[
   u: \RR^n\times (-b,b)\to \RR.
\]
Note the slope bound Theorem~\ref{w-bound-theorem} 
implies that the domain of $u$ is the entire slab $\RR^n\times (-b,b)$
rather than some open subset of it.

We have proved

\begin{theorem}\label{higher-slab-existence-theorem}
Let $0\le k_1\le k_2 \le \dots \le k_n$ with $\sum_ik_i=1$ and let $b>\pi/2$.
Then there is a complete translator
\[
   u: \RR^n\times (-b,b)\to \RR
\]
such that 
\begin{align*}
   u(0)  &= 0, \\
   u(x) &< 0 \quad\text{for $x\ne 0$,}
\end{align*}
and such that $-D^2u(0)$ is a positive definite diagonal matrix with
diagonal entries $\kappa_1,\dots, \kappa_{n+1}$ where
\[
   (\kappa_1,\dots, \kappa_n)
\]
is a scalar multiple of $(k_1,\dots, k_n)$.

If  $k_i=k_j$ (where $1\le i< j< n$), then $(x_iD_j-x_jD_i)u\equiv 0$.
\end{theorem}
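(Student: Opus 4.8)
The plan is to realize $u$ as a limit, as $R\to\infty$, of the ellipsoidal-slab translators $u_{\aa,b,R}\colon\Ee(\aa,R)\times[-b,b]\to\RR$ studied in Section~\ref{ellipsoidal-slabs-section}, where at each scale $R$ the ellipsoidal parameter $\aa=\aa(R)\in\Delta^n$ is chosen so as to prescribe the ratios of the horizontal principal curvatures at the apex, and where the slope estimate of Theorem~\ref{w-bound-theorem} is used to force the limit to be a graph over the entire slab rather than a thinner one.

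Concretely, write $k_i=k_i(\aa,R)=-D_{ii}u_{\aa,b,R}(0)$ and define $F_{b,R}\colon\Delta^n\to\Delta^n$ by $F_{b,R}(\aa)=(k_1,\dots,k_n)/\sum_{j=1}^n k_j$. By Theorem~\ref{lionel}\eqref{lionel-first-alexandrov-item}, $a_i>0$ forces $k_i>0$, and by Theorem~\ref{lionel}\eqref{lionel-long-item}, $a_i=0$ forces $D_iu\equiv 0$ and hence $k_i=0$; thus $F_{b,R}$ is well defined (not all $k_i$ vanish) and maps each face $\{a_i=0\}$ of $\Delta^n$ into itself, its restriction to such a face being the corresponding lower-dimensional map. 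Smooth dependence of $u_{\aa,b,R}$ on $\aa$ makes $F_{b,R}$ continuous. A face-preserving continuous self-map of the simplex restricts on $\partial\Delta^n\cong S^{n-1}$ to a map homotopic to the identity (linear homotopy inside each convex face), hence of degree one, and a continuous self-map of $\Delta^n$ whose boundary restriction has nonzero degree is surjective. So $F_{b,R}$ is onto, and for each $R$ one may pick $\aa(R)$ with $F_{b,R}(\aa(R))=\mathbf k$.

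Now take a subsequential limit of the vertically normalized graphs of $u_{\aa(R),b,R}-u_{\aa(R),b,R}(0)$ as $R\to\infty$. Stability of these graphs in the Ilmanen metric together with standard interior and boundary curvature and compactness estimates gives smooth subsequential convergence to a translator; Theorem~\ref{w-bound-theorem} supplies a uniform bound on $(b-|x_{n+1}|)\sqrt{1+|Du|^2}$, which is exactly what rules out the limiting domain being a thinner slab, so the limit is a complete $u\colon\RR^n\times(-b,b)\to\RR$. Each $u_{\aa(R),b,R}$ attains its strict interior maximum at $0$ by the Alexandrov monotonicity of Theorem~\ref{lionel}\eqref{lionel-first-alexandrov-item} and~\eqref{lionel-second-alexandrov-item}, so the normalization gives $u(0)=0\ge u$; applying the same monotonicity and the strong maximum principle to the linear elliptic equation solved by each $D_iu$ on the limit upgrades this to $u(x)<0$ for $x\ne0$. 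Smooth convergence gives $D^2u(0)=\lim D^2u_{\aa(R),b,R}(0)$, so $-D^2u(0)$ is diagonal with positive $(n+1,n+1)$ entry by Theorem~\ref{lionel}\eqref{lionel-second-alexandrov-item}, and its first $n$ diagonal entries, being a fixed positive multiple of $\mathbf k$ at every scale, remain a positive multiple of $\mathbf k$ in the limit (the multiple being $1$ minus the $(n+1,n+1)$ entry, from $\Delta u(0)=-1$). Finally, if $k_i=k_j$ for $1\le i<j<n$, then $a_i(R)=a_j(R)$ for all $R$ (the contrapositive of Theorem~\ref{lionel}\eqref{lionel-rotational-inequality-item}), whence $(x_iD_j-x_jD_i)u_{\aa(R),b,R}\equiv0$ by Theorem~\ref{lionel}\eqref{lionel-rotational-equality-item}, and this identity passes to the limit.

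The only genuinely delicate ingredient is the uniform slope bound that keeps the limiting domain equal to the full slab $\RR^n\times(-b,b)$; this is precisely Theorem~\ref{w-bound-theorem}, proved by the moving-catenoid argument, and may be invoked here. Granting it, the surjectivity of $F_{b,R}$ is a routine degree argument and the remaining verifications are standard compactness and maximum-principle arguments.
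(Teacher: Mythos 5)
Your argument is essentially the paper's own proof: solve the Dirichlet problem on the ellipsoidal slabs $\Ee(\aa,R)\times[-b,b]$, use the continuous, face-preserving map $F_{b,R}$ on the simplex (your degree argument is exactly the ``elementary topology'' the paper invokes) to choose $\aa(R)$ with the prescribed curvature ratios, and pass to a subsequential limit as $R\to\infty$, with Theorem~\ref{w-bound-theorem} guaranteeing that the limit is a graph over the whole slab $\RR^n\times(-b,b)$. The one detail I would phrase differently is the compactness step: for general $n$ the smooth subsequential convergence should be obtained from the $g$-area-minimizing property via the appendix results (Theorems~\ref{compactness-theorem} and~\ref{upper-halfspace-compactness}) rather than from stability-based curvature estimates, which are not available in high dimensions.
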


\begin{corollary}
For each width $b>\pi/2$, 
 there is an $(n-1)$-parameter family of such $\Delta$-wings $u$, 
no two of which are congruent to each other.
\end{corollary}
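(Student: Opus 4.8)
The plan is to realize the family as the translators furnished by Theorem~\ref{higher-slab-existence-theorem}. For $\mathbf k=(k_1,\dots,k_n)$ ranging over the ordered simplex $\{\,0\le k_1\le\cdots\le k_n,\ \sum_i k_i=1\,\}$, which has dimension $n-1$, fix one complete translator $u_{\mathbf k}\colon\RR^n\times(-b,b)\to\RR$ with $u_{\mathbf k}(0)=0=\max u_{\mathbf k}$, $u_{\mathbf k}(x)<0$ for $x\ne 0$, and $-D^2u_{\mathbf k}(0)=\operatorname{diag}(\kappa_1,\dots,\kappa_{n+1})$, where $(\kappa_1,\dots,\kappa_n)$ is a positive multiple of $(k_1,\dots,k_n)$. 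Everything then reduces to showing that if a Euclidean congruence $\Phi$ of $\RR^{n+2}$ carries $\operatorname{graph}u_{\mathbf k}$ onto $\operatorname{graph}u_{\mathbf k'}$, then $\{k_1,\dots,k_n\}=\{k_1',\dots,k_n'\}$ as multisets; by the ordering convention this gives $\mathbf k=\mathbf k'$, hence mutual non-congruence.

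The first step is to pin down $\Phi$. Write $\Phi(p)=Rp+c$ with $R\in O(n+2)$. Each $u_{\mathbf k}$ has a strict interior maximum, so $\operatorname{graph}u_{\mathbf k}$ cannot be a translating soliton for two distinct velocities: if $\vec H=-v_1^\perp=-v_2^\perp$ then $v_1-v_2$ is everywhere tangent, and evaluating at the critical point of $u_{\mathbf k}$ forces $v_1=v_2$. Since $\operatorname{graph}u_{\mathbf k'}=\Phi(\operatorname{graph}u_{\mathbf k})$ is a translator for $\ee_{n+2}$ and also for $\Phi_*\ee_{n+2}=R\ee_{n+2}$, we get $R\ee_{n+2}=\ee_{n+2}$. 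Then $\Phi$ shifts the $x_{n+2}$-coordinate of every point by the constant $\langle c,\ee_{n+2}\rangle$ and sends vertical graphs to vertical graphs, so it carries the unique highest point of $\operatorname{graph}u_{\mathbf k}$ — the origin, where $u_{\mathbf k}$ attains its strict maximum $0$ — to the unique highest point of $\operatorname{graph}u_{\mathbf k'}$, again the origin. Hence $\Phi(0)=0$, $c=0$, and $\Phi=R$ is linear. Since $R$ fixes $\ee_{n+2}$, it preserves the horizontal hyperplane and commutes with vertical projection; the induced $\tilde R\in O(n+1)$ maps the slab $\RR^n\times(-b,b)$ (the common vertical shadow of the two graphs) onto itself, and since that slab is unbounded exactly in the $\RR^n$-directions, $\tilde R=A\oplus B$ with $A\in O(n)$ acting on $(x_1,\dots,x_n)$ and $B\in\{\pm1\}$ on $x_{n+1}$.

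The second step is a Hessian comparison at the apex. Near the origin $u_{\mathbf k'}=u_{\mathbf k}\circ(A\oplus B)^{-1}$, so $-D^2u_{\mathbf k'}(0)=(A\oplus B)\bigl(-D^2u_{\mathbf k}(0)\bigr)(A\oplus B)^{-1}$. As $B$ acts only on the $x_{n+1}$-slot, this conjugation fixes the $(n+1,n+1)$-entry and conjugates the leading $n\times n$ block $\operatorname{diag}(\kappa_1,\dots,\kappa_n)$ by $A$. Both sides being diagonal (Theorem~\ref{higher-slab-existence-theorem}), we conclude $\kappa_{n+1}'=\kappa_{n+1}$ and $A\operatorname{diag}(\kappa_1,\dots,\kappa_n)A^{-1}=\operatorname{diag}(\kappa_1',\dots,\kappa_n')$, so $\{\kappa_1,\dots,\kappa_n\}=\{\kappa_1',\dots,\kappa_n'\}$ as multisets. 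Evaluating the translator equation~\eqref{translator-equation} at the critical point $0$ gives $\Delta u_{\mathbf k}(0)=-1$, that is, $\sum_{i=1}^{n+1}\kappa_i=1$; combined with $\sum_i k_i=1$, the proportionality constant between $(\kappa_1,\dots,\kappa_n)$ and $(k_1,\dots,k_n)$ equals $\sum_{i=1}^n\kappa_i$, which is the same positive number for $\mathbf k$ and $\mathbf k'$ because the multisets $\{\kappa_i\}_{i\le n}$ coincide. Therefore $\{k_1,\dots,k_n\}=\{k_1',\dots,k_n'\}$, and the ordering forces $\mathbf k=\mathbf k'$. This exhibits the $u_{\mathbf k}$ as an $(n-1)$-parameter family of pairwise non-congruent $\Delta$-wings.

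I expect the main obstacle to be the first step: excluding "exotic" congruences, that is, showing that any congruence between two of these graphs necessarily respects both the translating direction $\ee_{n+2}$ and the slab structure. The velocity-uniqueness observation (no hypersurface with a strict maximum translates in two non-parallel directions) handles the direction, and the "highest point" remark makes $\Phi$ linear without any control of the full critical set of $u_{\mathbf k}$; but these reductions should be checked carefully, since all of the rigidity in the statement is being extracted from the single point $0$ together with the asymptotic slab.
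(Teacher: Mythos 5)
Your proposal is correct and is essentially the argument the paper treats as immediate: the family is parametrized by the $(n-1)$-dimensional ordered simplex of admissible $\mathbf{k}$, and non-congruence is detected by the Hessian at the unique highest point, once one observes that any congruence between two such graphs must preserve the translation direction $\ee_{n+2}$ (your apex/tangency argument), hence fix the apex and respect the slab splitting, so it conjugates the diagonal Hessians by an element of $O(n)\oplus\{\pm1\}$ and forces equality of the curvature multisets. The paper states the corollary without proof, and your write-up correctly supplies the omitted rigidity details, including the normalization $\sum_{i=1}^{n+1}\kappa_i=1$ from the translator equation at the critical point.
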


In the special case $k_1=k_2=\dots=k_n$, then $u$ is rotationally invariant about
the $x_{n+1}$ axis (since $(x_iD_j-x_jD_i)u\equiv 0$).  
In that case, we have the following uniqueness theorem:

\begin{theorem}\label{higher-uniqueness} Let $b>\pi/2$.  Then there is a unique complete, proper
translator 
\[
   u: \RR^n\times (-b,b)\to\RR
\]
with the following properties:
\begin{enumerate}[\upshape (1)]
\item (Rotational invariance) $u$ is rotationally invariant about the $x_{n+1}$ axis:
\[
  u(x_1,x_2, \dots, x_n, x_{n+1}) \equiv u( \sqrt{x_1^2 + \dots + x_n^2},0,0,\dots, x_{n+1}).
\]
\item  (Slope bound) For every $\beta<b$,
\[
   \sup_{\RR^n\times [-\beta,\beta]} |Du| < \infty.
\]
\end{enumerate}
\end{theorem}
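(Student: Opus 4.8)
The plan is to get existence from the work already done and to prove uniqueness by adapting the proof of Theorem~\ref{symmetric-uniqueness} to the rotationally reduced equation. Existence is the special case $k_1=\dots=k_n=1/n$ of Theorem~\ref{higher-slab-existence-theorem} — the identity $(x_iD_j-x_jD_i)u\equiv 0$ there gives rotational invariance about the $x_{n+1}$-axis, and Theorem~\ref{w-bound-theorem} gives the slope bound — so the content is uniqueness. Write a rotationally invariant translator as $u(x',x_{n+1})=v(|x'|,x_{n+1})$ with $x'=(x_1,\dots,x_n)$; then $v$ on $[0,\infty)\times(-b,b)$ solves the rotationally reduced translator equation, which is uniformly elliptic, whose graph is minimal away from the axis $\{r=0\}$ for a smooth conformal metric, and for which vertical translates of solutions are again solutions. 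Rewriting $u$ as a function of $(|x'|^2,x_{n+1})$ shows that $v$ is real-analytic up to the axis and that the reduced equation, in those variables, is analytic.

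First I would record, using Alexandrov moving planes as in Section~\ref{higher-delta-wings} (properness and the slope bound supplying the control at the ends of the slab and at spatial infinity), that $u$ is even in $x_{n+1}$ with $\partial_{x_{n+1}}u<0$ where $x_{n+1}>0$ and that $v$ is strictly decreasing in $r$; in particular $Du(0)=0$, and I normalize $u(0)=0$. Next I would analyze the limit as $r\to\infty$. Since the graph $M$ is a complete, stable $g$-minimal hypersurface, the translates $M-(r\ee_1,v(r,0))$ subconverge smoothly to a complete translator $M'$; by the slope bound $M'$ is a graph over all of $\RR^n\times(-b,b)$, and rotational invariance of $M$ forces $M'$ to be invariant under the translations $\ee_2,\dots,\ee_n$, so $M'=\Sigma\times\RR^{n-1}$ for a complete translating graph $\Sigma$ over $\RR\times(-b,b)$ in $\RR^3$. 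By the Classification Theorem (Theorem~\ref{main-theorem}), $\Sigma$ is a tilted grim reaper or a $\Delta$-wing; but the cocycle identity
\[
 v(r+s+t,z)-v(r,0)=\bigl[v((r+t)+s,z)-v(r+t,0)\bigr]+\bigl[v(r+t,0)-v(r,0)\bigr]
\]
forces any such limit to depend affinely on $s$, so $\Sigma$ is the tilted grim reaper $\grim_{-\theta}$ with $\cos\theta=\pi/(2b)$ (the sign of the tilt being fixed by $v_r<0$). Hence the limit is independent of the sequence and $v_r(r,0)\to-\tan\theta$. I expect this step — ruling out a $\Delta$-wing limit, equivalently showing the radial profile is asymptotically linear — to be the main obstacle.

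Finally, given two normalized solutions $u,\hat u$ with reductions $v,\hat v$, I would follow Theorem~\ref{symmetric-uniqueness}. For each $z_0\in[0,b)$ one has $v_z(0,z_0)\le 0$, while $\hat v_z(0,\cdot)$ runs continuously from $0$ at $z=0$ to $-\infty$ as $z\to b$ (using evenness in $x_{n+1}$ and the blow-up of the gradient at the strip boundary), so there is $\hat z_0\in[0,b)$ with $\hat v_z(0,\hat z_0)=v_z(0,z_0)$; since $v_r(0,z_0)=\hat v_r(0,\hat z_0)=0$ by symmetry, $(0,z_0)$ is a critical point of $v-\tilde v$, where $\tilde v(r,z):=\hat v(r,z+\hat z_0-z_0)$. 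If $\hat z_0\ne z_0$, then $v$ and $\tilde v$ are solutions over overlapping, non-nested strips, and — using the asymptotics of the previous step to bound the number of ends of $\{v-\tilde v=c\}$ at $r=\infty$ and at the strip ends — a Rado-type theorem for the reduced equation (the axis $\{r=0\}$ being handled by passing to the analytic variables $(|x'|^2,x_{n+1})$) shows $v-\tilde v$ has no critical point, exactly as in Proposition~\ref{different-intervals-proposition}\eqref{shifted-item}; this contradiction forces $\hat z_0=z_0$. Hence $v_z(0,\cdot)\equiv\hat v_z(0,\cdot)$, so with the normalization $v(0,\cdot)\equiv\hat v(0,\cdot)$. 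Since the reduced equation, written in the analytic variables $(|x'|^2,x_{n+1})$, determines all derivatives of a solution transverse to the axis from its restriction there, it follows that $u\equiv\hat u$ (equivalently, Cauchy--Kowalevski for that non-singular analytic form).
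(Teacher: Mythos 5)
You follow the route the paper itself indicates: existence as the special case $k_1=\dots=k_n$ of Theorem~\ref{higher-slab-existence-theorem} (with the slope bound from Theorem~\ref{w-bound-theorem}), and uniqueness by modding out the rotational symmetry and rerunning the proof of Theorem~\ref{symmetric-uniqueness} for the two-variable profile $v(r,x_{n+1})$ — asymptotics as $r\to\infty$, a Rado-type critical-point count for the reduced equation, and an analytic continuation step at the axis. That is exactly the reduction the paper describes (it sets $U(x,y)=u(x,0,\dots,0,y)$ and omits the details), so in architecture your proof and the paper's agree.

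The step that fails as written is the mechanism you offer for the step you yourself flag as the main obstacle. The cocycle identity does not force subsequential limits of $v(r_i+s,z)-v(r_i,0)$ to be affine in $s$: for \emph{any} translator, the limit along the shifted sequence $r_i+t$ automatically exists and equals $w(s+t,z)-w(t,0)$, where $w$ is the limit along $r_i$, so the identity is satisfied identically and yields no constraint unless one already knows the limit is independent of the sequence — which is precisely what is at issue; a horizontal translate of the $\Delta$-wing $u^b$ is perfectly consistent with it. The conclusion you want follows instead from the radial monotonicity you established in the preceding paragraph: $s\mapsto w(s,0)=\lim_i\bigl(v(r_i+s,0)-v(r_i,0)\bigr)$ is non-increasing, whereas every horizontal translate of $u^b$ restricted to $\{x_{n+1}=0\}$ rises to its apex and then falls; hence no subsequential limit is a $\Delta$-wing, and with the slope bound excluding vertical planes and the strip width fixing the angle, every normalized limit is the same tilted grim reaper, which also gives independence of the sequence and $v_r(r,0)\to-\tan\theta$. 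With that substitution your argument does carry through along the paper's intended lines; be aware, though, that the moving-plane facts you invoke for an arbitrary complete proper solution (evenness in $x_{n+1}$, $v_r<0$) are the higher-dimensional analogue of the Spruck--Xiao strip theorem and still need a genuine proof — the moving-plane arguments in Section~\ref{higher-delta-wings} are only run for Dirichlet solutions on bounded domains — so they should be established, not merely cited, especially since the repair above leans on them.
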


Existence is a special case of Theorem~\ref{higher-slab-existence-theorem}.
The proof of uniqueness is essentially the same as the proof of
 Theorem~\ref{symmetric-uniqueness}, so we omit it.
The arguments in the proof of Theorem~\ref{symmetric-uniqueness}  
are very two dimensional, but if we mod out by the symmetry,
then we are dealing with a function of two variables.  Specifically, we let 
\begin{align*}
   &U: \RR\times (-b,b) \to \RR, \\
   &U(x,y) = u(x,0,0,\dots,0,y).
\end{align*}

Existence (but not uniqueness) of rotationally invariant examples was proved in
a different way, using barriers, by Bourni et al.  The barriers show that their
examples satisfy the slope bound hypothesis in Theorem~\ref{higher-uniqueness}.

In general, we do not know whether the surfaces in
 Theorem~\ref{higher-slab-existence-theorem} are convex.
However, if a complete translator has no more than two distinct principal curvatures
at each point, then it is convex \cite[Theorem 3.1]{bourni-et-al}.  Hence in the special case $k_1=\dots=k_n$,
the surface is convex.

%%%

\section{Appendix: Compactness Theorems}\label{appendix}

\begin{theorem}\label{compactness-theorem}
For $k=1, 2, \dots$, let $\Omega_k$ be a convex open subset of $\RR^n$
and let $u_k:\Omega_k\to \RR$ be a smooth translator.
Let $M_k$ be the graph of $u_k$.   
Suppose that $W$ is a connected open subset of $\RR^n$ such that for each $k$,
\[
    W\times (-k,k)
\]
does not contain any of the boundary of $M_k$.

Then, after passing to a subsequence, $M_k\cap (W\times \RR)$ 
converges weakly  in $W\times \RR$ to a translator $M$ that is $g$-area-minimizing.
Furthermore, if $S$ is a connected component of $M$, then either
\begin{enumerate}
\item $S$ is the graph of a smooth function over an open subset of $W$ and the convergence
to $S$ is smooth, or
\item  $S=\Sigma \times \RR$, where $\Sigma$ is a variety in $W$ that is minimal
   with respect to the Euclidean metric on $\RR^n$.  The singular set of $\Sigma$ has Hausdorff
   dimension at most $n-7$.
\end{enumerate}
\end{theorem}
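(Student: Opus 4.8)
The plan is to obtain the limit surface and its regularity from standard minimal surface theory, using the fact (emphasized in the Preliminaries) that translators are minimal with respect to the Ilmanen metric $g_{ij} = \exp(-\tfrac2n x_{n+1})\delta_{ij}$, and that each $M_k$, being a graphical translator together with its vertical translates, forms a $g$-minimal foliation of $\Omega_k\times\RR$ and is therefore $g$-area-minimizing in $\Omega_k\times\RR$.

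First I would establish uniform local area bounds for $M_k$ in $W\times\RR$. Fix a compact set $K\subset W\times\RR$; for $k$ large, $K\subset W\times(-k,k)$, so $\partial M_k$ does not meet $K$, and one can enclose $K$ in a slightly larger compact region $K'\subset W\times(-k,k)$ whose boundary also misses $\partial M_k$. Since $M_k$ is $g$-area-minimizing, the $g$-area of $M_k\cap K'$ is at most half the $g$-area of $\partial K'$, which is a bound independent of $k$. Because the Ilmanen metric is uniformly comparable to the Euclidean metric on any bounded region, this gives a uniform bound on the Euclidean area of $M_k$ in $K$. Thus the varifolds associated to $M_k\cap(W\times\RR)$ have locally bounded mass, and after passing to a subsequence they converge weakly to a varifold $M$; since each $M_k$ is $g$-stationary and $g$-area-minimizing with boundary staying outside $W\times(-k,k)$, the limit $M$ is a $g$-stationary integral varifold in $W\times\RR$ which is $g$-area-minimizing (a limit of area-minimizers with boundaries escaping to infinity is area-minimizing on compact sets).

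Next I would apply the regularity and structure theory. Each $M_k$ is stable for the $g$-minimal surface equation (the vertical-translation Jacobi field $\langle\ee_{n+1},\nu\rangle$ is positive, hence nowhere vanishing), so the limit $M$ is a stable $g$-minimal variety with the local area bounds just derived. By the curvature estimates for stable minimal hypersurfaces (e.g.\ those in \cite{white-curvature-estimates}) and the regularity theory for area-minimizing hypersurfaces, $M$ is smooth away from a closed set of Hausdorff dimension at most $n-7$. For the structure of a component $S$: consider the restriction of the coordinate function $x_{n+1}$, equivalently the vertical-translation Jacobi field on the approximating surfaces. On the smooth part of $S$, either $\langle\ee_{n+1},\nu\rangle$ is nowhere zero — in which case $S$ is locally a graph and, since $\langle\ee_{n+1},\nu\rangle$ cannot change sign (it is a limit of positive functions, hence $\ge 0$, hence by the strong maximum principle on the connected smooth part either $\equiv 0$ or $>0$), $S$ is globally a graph over an open subset of $W$ with smooth convergence by the curvature estimates — or else $\langle\ee_{n+1},\nu\rangle\equiv 0$ on $S$, meaning $S$ is invariant under vertical translations, so $S=\Sigma\times\RR$ with $\Sigma\subset W$ stationary for the Euclidean-minimal equation (the conformal factor $\exp(-\tfrac2n x_{n+1})$ is irrelevant to the stationarity of a vertically-translation-invariant surface once one factors out the $\RR$ direction — more precisely, vertical cylinders are $g$-minimal iff $\Sigma$ is Euclidean-minimal); the dimension bound on the singular set of $\Sigma$ is again the standard one for area-minimizing hypersurfaces in $\RR^n$.

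The main obstacle I expect is the dichotomy step: showing that each component $S$ is \emph{either} a smooth multiplicity-one graph \emph{or} entirely a vertical cylinder, with nothing in between, and ruling out higher-multiplicity sheeting. The key is that $M$ is $g$-area-minimizing (not merely stationary): this forces multiplicity one on the smooth part (two sheets of an area-minimizer cannot be tangent by the strong maximum principle, and bounded area plus minimizing rules out interior accumulation of sheets), and it is what upgrades stability estimates to the sharp $n-7$ singular-set bound. Once multiplicity one is in hand, the trichotomy of the sign of $\langle\ee_{n+1},\nu\rangle$ on a connected smooth component — forced by the strong maximum principle applied to this Jacobi field — gives exactly the two alternatives, and Allard-type regularity promotes weak convergence to smooth convergence in the graphical case. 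The cylinder case needs the observation that if $\langle\ee_{n+1},\nu\rangle$ vanishes on an open piece of the smooth part it vanishes on the whole component by unique continuation, pinning down $S=\Sigma\times\RR$ globally.
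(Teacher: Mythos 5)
Your overall skeleton (each $M_k$ is $g$-area-minimizing because it and its vertical translates foliate $\Omega_k\times\RR$; local mass bounds plus compactness give a $g$-area-minimizing limit; then a dichotomy between graphical components and vertical cylinders) matches the paper, but the mechanism you use for the dichotomy and for regularity has a genuine gap in the graphical case. You rely on ``curvature estimates for stable minimal hypersurfaces'' \cite{white-curvature-estimates} and ``Allard-type regularity'' to conclude that a component on which $\langle\ee_{n+1},\nu\rangle>0$ is a smooth graph with smooth convergence. The theorem is stated for all $n$: for hypersurfaces of dimension $\ge 7$ stability yields no curvature estimate (the Simons cone is stable, indeed area-minimizing, and singular), and Allard's theorem applies only near points where one already knows the density is close to one, so nothing in your argument excludes singular points in the ``graphical'' component or upgrades weak convergence to smooth convergence there. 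Moreover, asserting $\langle\ee_{n+1},\nu\rangle\ge 0$ on the limit presupposes local, multiplicity-one, $C^1$ convergence near regular points, which is part of what is to be proved (for minimizing boundaries it follows from convergence of the perimeter measures, not from ``two sheets cannot be tangent''). The paper closes the graphical case differently: since the $\Omega_k$ are convex, each vertical line meets the limit in a connected set, so if the limit contains no vertical segment it is the graph of a \emph{continuous} function; uniform convergence follows from monotonicity, and then the interior estimates for graphical solutions (Theorem~5.2 of \cite{evans-spruck-iii}, cf.\ \cite{colding-minicozzi-sharp}) give smooth convergence, hence smoothness of the limit. Some such input specific to graphs is indispensable and is missing from your proposal.

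In the cylinder case your argument ($\langle\ee_{n+1},\nu\rangle\equiv 0$ on the regular part, then unique continuation) is workable but lives only on the regular set: as set up, it does not address vertical segments that might lie entirely in the singular set (a priori possible once $n-7\ge 1$), nor does it give global single-valuedness of the graph in the other alternative. The paper instead splits according to whether the limit contains a vertical segment at all, and in that case compares $M$ with its vertical translates $M(s)$ via Simon's strict maximum principle for area-minimizing hypersurfaces \cite{simon-maximum}; this applies directly to the possibly singular minimizing limit and forces $M=M(s)$ for all small $s$, hence $M=\Sigma\times\RR$, after which the $n-7$ bound and Euclidean minimality of $\Sigma$ follow as you say. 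If you wish to keep the Jacobi-field dichotomy, you need to add these ingredients: connectedness of vertical slices (from convexity of $\Omega_k$), a maximum principle valid across the singular set (Simon's), and the Evans--Spruck/Colding--Minicozzi estimates in the graphical case.
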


\begin{proof}
Since $\Omega_k$ is convex and since
$M_k$ and its vertical translates form a $g$-minimal foliation of $\Omega_k\times \RR$,
standard arguments (cf.~\cite{morgan-book}*{\S6.2}) show that $M_k$ is $g$-area-minimizing as an integral current,
or even as a mod $2$ flat chain.

Thus the standard compactness theorem (cf.~\cite{simon-gmt-book}*{\S34.5}) 
gives subsequential convergence (in the local flat topology)
to a $g$-area-minimizing hypersurface $M$ (with no boundary in $W\times I$).
Also, standard arguments show that the support of $M_k$ converges to the support of $M$.
Hence we will not make a distinguish here
between the flat chain and its support.

For notational simplicity, let us assume that $M$ is connected.
Clearly, each vertical line intersects $M$ in a connected set.

{\bf Case 1}: $M$ contains a vertical segment of some length $\eps>0$.
 Let $M(s)$ be the result of translating $M$ vertically
by a distance $s$, where $0<s<\eps$. 
Then by the strong maximum principle of L. Simon~\cite{simon-maximum}, 
$M=M(s)$.   Since this is true for all $s$ with $0<s<\eps$, it follows that $M=\Sigma\times\RR$ for some $\Sigma$.
Since $\Sigma\times\RR$ is $g$-area-minimizing, its singular set has Hausdorff dimension
at most $(n+1)-7$, and therefore the singular set of $\Sigma$ has Hausdorff dimension
at most $n-7$.  Since $\Sigma\times \RR$ is $g$-minimal, $\Sigma$ must be minimal
with respect to the Euclidean metric.

{\bf Case 2}: $M$ contains no vertical segment.
Then $M$ is the graph of a continuous function $u$ whose domain is an open subset of $W$.
Let $\overline{\BB(p,r)}$ be a closed ball in the domain of $u$.
Then $\overline{\BB(p,r)}$ is contained in the domain of $u_k$ for large $k$, and $u_k$
converges uniformly to $u$ on $\overline{\BB(p,r)}$. (The uniform convergence follows
from monotonicity.)   By Theorem~5.2 of~\cite{evans-spruck-iii}
(rediscovered in \cite{colding-minicozzi-sharp}*{Theorem~1}), 
the convergence is smooth on $\BB(p,r)$.
\end{proof}

\begin{theorem}\label{upper-halfspace-compactness}
For $k=1, 2, \dots$, let $\Omega_k$ be a convex open subset of $\RR^n$
such that the $\Omega_k$ converge to an open set $\Omega$. Let $u_k:\overline{\Omega_k}\to\RR$ be a 
translator with boundary values $0$, and
let $M_k$ be the graph of $u_k$.
Then, after passing to a subsequence, the $M_k$ converge smoothly in $\RR^n\times (0,\infty)$
to a smooth translator $M$.   If $S$ is a connected component of $M$, then
either
\begin{enumerate}
\item $S$ is the graph of a smooth function whose domain is an open subset of $\Omega$, or
\item $S=\Sigma\times [0,\infty)$, where $\Sigma$ is an $(n-1)$-dimensional affine plane in $\RR^n$.
\end{enumerate}
Furthermore, $M$ is a smooth manifold-with-boundary in a neighborhood of every point
of $\partial M$ where $\partial M$ is smooth, and the convergence of $M_k$ to $M$ is smooth
up the boundary wherever the convergence of $\partial M_k$ to $\partial M$ is smooth.
\end{theorem}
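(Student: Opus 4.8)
The plan is to apply the compactness machinery of Theorem~\ref{compactness-theorem} in the open halfspace $\RR^n\times(0,\infty)$ and then to sharpen its conclusions using the zero boundary values. First, since each $u_k$ solves~\eqref{translator-equation} it has no interior local minimum (at such a point $Du_k=0$ and $\Delta u_k\ge 0$, making the left side of~\eqref{translator-equation} positive), so $u_k\ge 0$ on $\overline{\Omega_k}$; thus $M_k\subset\overline{\Omega_k}\times[0,\infty)$, and $\partial M_k=\partial\Omega_k\times\{0\}$ is disjoint from $\RR^n\times(0,\infty)$. As in Theorem~\ref{compactness-theorem}, $M_k$ is $g$-area-minimizing with uniform local area bounds (its vertical translates foliate $\Omega_k\times\RR$ and calibrate it, so the area of $M_k$ in a compact set is at most half the area of the boundary of that set). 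Passing to a subsequence, $M_k$ converges weakly in $\RR^n\times(0,\infty)$ to a $g$-area-minimizing $M$ with no boundary there; by L.~Simon's strong maximum principle and the interior regularity used in the proof of Theorem~\ref{compactness-theorem}, each component $S$ of $M$ is either a smooth graph over an open subset of $\RR^n$, with smooth convergence, or of the form $\Sigma\times(0,\infty)$ with $\Sigma$ minimal for the Euclidean metric. Since $M\subset\overline{\Omega_k}\times\RR$ for all $k$ and $\Omega_k\to\Omega$ with $\Omega$ convex, graphical components lie over open subsets of $\mathrm{int}\,\overline\Omega=\Omega$ and $\Sigma\subset\overline\Omega$; and since $M\subset\{x_{n+1}\ge 0\}$, the closure of a cylindrical component is $\Sigma\times[0,\infty)$.

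The substance is to show that such $\Sigma$ is (contained in) an affine hyperplane. I would argue first that a cylindrical component reaches the base plane: $M$ is area-minimizing with no boundary in $\RR^n\times(0,\infty)$, so it has no free edge at a positive height, and a stationary varifold cannot have a corner between a vertical sheet and a sheet of finite slope, so the cylinder cannot terminate against a graphical component either; hence it is a standalone component whose closure $\Sigma\times[0,\infty)$ meets $\{x_{n+1}=0\}$ along $\Sigma\times\{0\}$. Because the boundary operator is continuous under weak convergence, $\partial M$ is supported in $\lim(\partial\Omega_k\times\{0\})\subset\partial\Omega\times\{0\}$, so $\Sigma\subset\partial\Omega$. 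Since $\Omega$ is convex, $\partial\Omega$ has nonnegative principal curvatures with respect to its inner normal; as $\Sigma\subset\partial\Omega$ is Euclidean-minimal, these all vanish along $\Sigma$, so $\Sigma$ is totally geodesic, i.e., an open subset of an affine hyperplane (a whole hyperplane in the cases, such as slabs, that occur in the applications).

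For the last assertion, fix $p_0\in\partial M$ at which $\partial M$ is a smooth $(n-1)$-manifold with $\partial M_k\to\partial M$ smoothly nearby (so $\partial\Omega$ is smooth near the corresponding point). Near $p_0$ the $M_k$ are $g$-area-minimizing with smooth boundary and uniform local area bounds, and they lie in $\{x_{n+1}\ge 0\}$, which serves as a barrier: a boundary branch point would force a sheet of $M_k$ to cross below $\{x_{n+1}=0\}$, which is impossible since $u_k\ge 0$. Hence by standard boundary regularity and compactness for $g$-minimal hypersurfaces (cf.~\cite{white-curvature-estimates}), $M$ is a smooth manifold-with-boundary near $p_0$ and $M_k\to M$ smoothly up to the boundary there, whether the local picture is graphical or a vertical sheet standing on $\{x_{n+1}=0\}$.

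The genuine obstacle should be the first claim of the second paragraph: making rigorous that in the limit a vertical sheet can appear only over $\partial\Omega$, ruling out \emph{interior cliffs} and graph-plus-cylinder hybrids. The tool is the interior gradient estimate for translator graphs --- on a ball over which $u_k$ has bounded oscillation, $|Du_k|$ is bounded --- which localizes where the slopes of $u_k$ may blow up, used with the no-interior-minimum property; an alternative is to establish that a translator over a convex domain with constant boundary data has convex superlevel sets, which makes the limiting transition set $\Sigma$ convex and hence, being minimal, affine.
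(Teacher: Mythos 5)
There is a genuine gap, and it is the one you yourself flag at the end: ruling out interior vertical sheets (``cliffs'') is not an optional refinement but the heart of the theorem, and your proposal leaves it as an acknowledged obstacle with only a sketch of possible tools (interior gradient estimates, convexity of superlevel sets) that is never carried out. The paper disposes of it almost for free, by using Theorem~\ref{compactness-theorem} with its \emph{full} cylindrical alternative: when that theorem is applied over $W\subset\subset\Omega$ (where, for large $k$, $W\times(-k,k)$ contains no boundary of $M_k$), a non-graphical component is $\Sigma\times\RR$, i.e.\ invariant under \emph{all} vertical translations, because a single vertical segment plus Simon's strong maximum principle forces full invariance. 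Since $u_k\ge 0$, any component $S$ of the limit that stays inside $\Omega\times\RR$ lies in $\Omega\times[0,\infty)$ and so cannot be vertically translation-invariant; hence it is a smooth graph with smooth convergence. By weakening the cylindrical alternative to $\Sigma\times(0,\infty)$ at the outset, you discard exactly the leverage that kills interior cliffs, and your substitute argument (continuity of the boundary operator forcing $\Sigma\subset\partial\Omega$) needs mass bounds up to the plane $\{x_{n+1}=0\}$ and has to contend with possible cancellations there, neither of which you address.

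Two further points. For components that do touch $\partial\Omega\times(0,\infty)$, the paper does not compare second fundamental forms (which would presuppose smoothness of $\partial\Omega$); it applies the strong maximum principle of Solomon--White/Simon/White with the $g$-mean-convex barrier $\partial\Omega\times\RR$, concluding that $S$ contains the cylinder over an \emph{entire} connected component $\Sigma$ of $\partial\Omega$, which, being minimal and the boundary component of a convex set, is a full affine hyperplane. Your version only yields that $\Sigma$ is an open subset of a hyperplane, ``a whole hyperplane in the cases that occur in the applications,'' which is weaker than the statement being proved. Finally, for the boundary assertion the paper invokes the Hardt--Simon boundary regularity theorem; the compactness results of \cite{white-curvature-estimates} that you cite are for surfaces in $3$-manifolds and do not cover general $n$, so that step also needs the paper's reference rather than yours.
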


\begin{proof}
Case 1: $S$ contains a point $p$ in $\partial \Omega\times (0,\infty)$.
Let $\Sigma$ be the connected component of $\partial \Omega$ such that $p\in \Sigma\times (0,\infty)$.
Then by the strong maximum principle (\cite{solomon-white} or \cite{simon-maximum}
or~\cite{white-controlling}*{Theorem~7.3}), $M$ contains all of $\Sigma\times (0,\infty)$, and therefore
(since $\Omega$ is convex) $\Sigma$ must be a plane.   It follows that the convergence to $S$ is smooth
in $\RR^n\times (0,\infty)$.

Case 2: $S$ contains no point in $\partial \Omega\times \RR$.  
That is, $S$ is contained in $\Omega\times \RR$.
Indeed, $S$ is contained in $\Omega\times[0,\infty)$, 
so it cannot be translation-invariant in the vertical direction.
Thus by Theorem~\ref{compactness-theorem}, 
$S$ is a smooth graph over an open subset of $\Omega$, and the convergence
to $S$ is smooth.

The assertions about boundary behavior follow, for example, from the Hardt-Simon Boundary
Regularity Theorem~\cite{hardt-simon-boundary}.  
(Note that the tangent cone to $M$ at a regular point of $\partial M$ is,
after a rotation of $\RR^n$, a cone in $\RR^{n-1}\times [0,\infty)\times[0,\infty)$ whose
boundary is the plane $\RR^{n-1}\times\{0\}\times\{0\}$.)
\end{proof}

%%%

\begin{bibdiv}
\begin{biblist}

\bib{Altschuler-Wu}{article}{  
author={Altschuler, S. J.},
author={ Wu, Lang F.}, 
title={Translating surfaces of the non-parametric mean curvature flow with prescribed contact angle},
journal={ Calc. Var. Partial Differential Equations},
volume={2},
 date={1994}, 
 number={1}, 
 pages={101--111},}
 
\bib{bourni-et-al}{article}{
   author={Bourni, T.},
   author={Langford, M.},
   author={Tinaglia, G.},
   title={On the existence of translating solutions of mean curvature flow in slab regions},
   journal={arXiv:1805.05173},
   date={2018},
   pages={1-25},
}

\bib{CSS}{article}{ 
author={Clutterbuck, J.},
author={ Schn{\"u}rer, O.},
author={Schulze, F.}, 
title={Stability of translating solutions to mean curvature flow},
journal={Calc. Var. and Partial Differential Equations},
volume={29},
date={2007}, 
pages={281--293},}

\bib{colding-minicozzi-sharp}{article}{
   author={Colding, Tobias H.},
   author={Minicozzi, William P., II},
   title={Sharp estimates for mean curvature flow of graphs},
   journal={J. Reine Angew. Math.},
   volume={574},
   date={2004},
   pages={187--195},
   issn={0075-4102},
   review={\MR{2099114}},
   doi={10.1515/crll.2004.069},
}

\bib{ecker-huisken}{article}{
author={Ecker, K.},
author={Huisken, G.},
title={Mean curvature Evolution of Entire Graphs},
journal={Ann.  Math.},
volume={130},
date={1989},
number={3},
pages={453--471},
}

\bib{evans-spruck-iii}{article}{
   author={Evans, L. C.},
   author={Spruck, J.},
   title={Motion of level sets by mean curvature. III},
   journal={J. Geom. Anal.},
   volume={2},
   date={1992},
   number={2},
   pages={121--150},
   issn={1050-6926},
   review={\MR{1151756}},
   doi={10.1007/BF02921385},
}

\bib{hardt-simon-boundary}{article}{
   author={Hardt, Robert},
   author={Simon, Leon},
   title={Boundary regularity and embedded solutions for the oriented
   Plateau problem},
   journal={Ann. of Math. (2)},
   volume={110},
   date={1979},
   number={3},
   pages={439--486},
   issn={0003-486X},
   review={\MR{554379}},
   doi={10.2307/1971233},
}

\bib{survey}{article}{
author={Hoffman, D.},
author={Ilmanen, T.},
author={Martín, F.},
author={White, B.},
title={Notes on Translating Solitons for Mean Curvature Flow},
date={2019},
journal={Preprint arXiv:1901.09101},
}

\bib{scherkon}{article}{
author={Hoffman, D.},
author={Martín, F.},
author={White, B.},
title={Scherk-like Translators for Mean Curvature Flow},
date={2019},
journal={Preprint arXiv:1903.04617},
}

\bib{annuli}{article}{
author={Hoffman, D.},
author={Martín, F.},
author={White, B.},
title={Translating Annuli for Mean Curvature Flow},
date={2019},
journal={In preparation},
}

\bib{ilmanen}{article}{
   author={Ilmanen, T.},
   title={Elliptic regularization and partial regularity for motion by mean
   curvature},
   journal={Mem. Amer. Math. Soc.},
   volume={108},
   date={1994},
   number={520},
   pages={x+90},
   %issn={0065-9266},
   %review={\MR{1196160 (95d:49060)}},
   %doi={10.1090/memo/0520},
}

\bib{mss}{article}{
author={Martín, F.},
author={Savas-Halilaj, A.},
author={Smoczyk, K.},
title={On the topology of translating solitons of the mean curvature flow},
journal={Calculus of Variations and PDE's},
volume={54},
date={2015},
number={3},
pages={2853--2882},
}	

\bib{morgan-book}{book}{
   author={Morgan, Frank},
   title={Geometric measure theory},
   note={A beginner's guide},
   publisher={Academic Press, Inc., Boston, MA},
   date={1988},
   pages={viii+145},
   isbn={0-12-506855-7},
   review={\MR{933756}},
}

\bib{rado-book}{book}{
   author={Rad\'o, T.},
   title={On the Problem of Plateau},
   publisher={Chelsea Publishing Co., New York, N. Y.},
   date={1951},
   pages={iv+109},
   review={\MR{0040601}},
}

\bib{shari}{article}{
   author={Shahriyari, L.},
   title={Translating graphs by mean curvature flow},
   journal={Geom. Dedicata},
   volume={175},
   date={2015},
   %number={4},
   pages={57--64},
   %issn={0022-040X},
   %review={\MR{730928 (85f:53011)}},
}

\bib{simon-gmt-book}{book}{
   author={Simon, Leon},
   title={Lectures on geometric measure theory},
   series={Proceedings of the Centre for Mathematical Analysis, Australian
   National University},
   volume={3},
   publisher={Australian National University, Centre for Mathematical
   Analysis, Canberra},
   date={1983},
   pages={vii+272},
   isbn={0-86784-429-9},
   review={\MR{756417}},
}

\bib{simon-maximum}{article}{
   author={Simon, Leon},
   title={A strict maximum principle for area minimizing hypersurfaces},
   journal={J. Differential Geom.},
   volume={26},
   date={1987},
   number={2},
   pages={327--335},
   issn={0022-040X},
   review={\MR{906394}},
}

\bib{solomon-white}{article}{
   author={Solomon, Bruce},
   author={White, Brian},
   title={A strong maximum principle for varifolds that are stationary with
   respect to even parametric elliptic functionals},
   journal={Indiana Univ. Math. J.},
   volume={38},
   date={1989},
   number={3},
   pages={683--691},
   issn={0022-2518},
   review={\MR{1017330}},
   doi={10.1512/iumj.1989.38.38032},
}

\bib{spruck-xiao}{article}{
   author={Spruck, J.},
   author={Xiao, L.},
   title={Complete translating solitons to the mean curvature flow in $\R^3$ 
   with nonnegative mean curvature},
   journal={Amer.~J.~Math.},
   journalinfo={to appear},
   note={To appear. (Preprint at arXiv:1703.01003v2)},
   date={2017},
   pages={1-23},
}

\bib{wang}{article}{
   author={Wang, X.J.},
   title={Convex solutions of the mean curvature flow},
   journal={Ann. Math.},
   volume={173},
   date={2011},
   pages={1185-1239},
}

\begin{comment}
\bib{whi02}{inproceedings}{
   title={Evolution of curves and surfaces by mean curvature},
   author={White, B.},
   booktitle={Procee\-dings of the ICM},
   volume={1},
   date={2002},
   pages={525-538}
}
\end{comment}

\bib{white-curvature-estimates}{article}{
   author={White, B.},
   title={Curvature estimates and compactness theorems in $3$-manifolds for
   surfaces that are stationary for parametric elliptic functionals},
   journal={Invent. Math.},
   volume={88},
   date={1987},
   number={2},
   pages={243--256},
   issn={0020-9910},
   review={\MR{880951}},
   doi={10.1007/BF01388908},
}

\bib{white-nature}{article}{
   author={White, B.},
   title={The nature of singularities in mean curvature flow of mean-convex
   sets},
   journal={J. Amer. Math. Soc.},
   volume={16},
   date={2003},
   number={1},
   pages={123--138 (electronic)},
   issn={0894-0347},
   review={\MR{1937202 (2003g:53121)}},
   doi={10.1090/S0894-0347-02-00406-X},
}

\bib{white-intro}{article}{
   author={White, B.},
   title={Introduction to minimal surface theory},
   conference={
      title={Geometric analysis},
   },
   book={
      series={IAS/Park City Math. Ser.},
      volume={22},
      publisher={Amer. Math. Soc., Providence, RI},
   },
   date={2016},
   pages={387--438},
   review={\MR{3524221}},
}

\bib{white-controlling}{article}{
   author={White, B.},
   title={Controlling area blow-up in minimal or bounded mean curvature
   varieties},
   journal={J. Differential Geom.},
   volume={102},
   date={2016},
   number={3},
   pages={501--535},
   issn={0022-040X},
   review={\MR{3466806}},
}   

\begin{comment}
\bib{whi15}{article}{
   author={White, B.},
   title={On the compactness theorem for embedded minimal surfaces in $3$-manifolds with locally bounded area and genus},
   journal={arXiv:1503.02190v1},
   date={2015},
   pages={1-13},
}
\end{comment}

\begin{comment}
\bib{XIN16}{article}{
author={Xin, Y. L.},
title={ Translating solitons of the mean curvature flow},
journal={ Calculus of Variations and Partial Differential Equations},
volume={54},
date={2015},
pages={1995-2016},
}
\end{comment}

\end{biblist}
\end{bibdiv}

\end{document}